\numberwithin{equation}{section}
\newtheorem{teo}{Theorem}[section] 
\newtheorem{lem}[teo]{Lemma}
\newtheorem{cor}[teo]{Corollary}
\newtheorem{defi}[teo]{Definition}
\newtheorem{oss}[teo]{Remark}
\newtheorem{prop}[teo]{Proposition}
\newtheorem{ebsk}[teo]{EbS(k)}
\title{Inductive approach to effective b-semiampleness}
\date{\today}
\author{Enrica Floris}
\address{Enrica Floris\\
IRMA, Universit\'e de Strasbourg et CNRS\\
7 rue Ren\'e-Descartes\\
67084 Strasbourg Cedex\\
France}
\email{floris@math.unistra.fr}
\begin{document}
\begin{abstract}
An lc-trivial fibration is the data of a pair $(X,B)$
and a fibration such that $(K_X+B)|_F$ is torsion where $F$
is the general fibre.
For such a fibration we have an equality
$K_X+B+1/r(\varphi)=f^{\ast}(K_Z+B_Z+M_Z)$
where $B_Z$ is the discriminant of $f$, $M_Z$ is the moduli part and $\varphi$ is a rational function.
It has been conjectured by Prokhorov and Shokurov
that there exists an integer $m=m(r,\dim F)$
such that $mM_{Z'}$ is base-point-free on some birational model $Z'$ of $Z$.
In this work we reduce this conjecture to the case where the base $Z$ has dimension one.
Moreover in the case where $M_Z\equiv 0$ we prove the existence of an integer $m$, that depends only on
the middle Betti number of a canonical covering of the fibre, such that $mM_Z\sim 0$.
\end{abstract}

\maketitle
\section{Introduction}
The canonical bundle formula is a tool for studying
the properties of the canonical bundle of a variety $X$,
such that there exists a fibration $f\colon X\rightarrow Z$,
in terms of the properties of the canonical bundle of $Z$,
of the singularities of the fibration and of the birational variation of the fibres of $f$.

More precisely we consider a pair $(X,B)$ such that there exists
an lc-trivial fibration $f\colon (X,B)\rightarrow Z$,
that is a fibration such that $(K_X+B)|_F$ is a torsion divisor,
where $F$ is a general fibre
(see Section 2 for a complete definition).
Then we can write $$K_X+B+\frac{1}{r}(\varphi)=f^{\ast}(K_Z+B_Z+M_Z)$$
where $\varphi$ is a rational function and $r$ is the minimum integer
such that $r(K_X+B)|_F\sim 0$.
The divisor $B_Z$ is called the \textit{discriminant} and it is defined in terms of some log-canonical thresholds
with respect to the pair $(X,B)$.
Precisely we have
$B_Z=\sum(1-\gamma_p)p$ where
$$\gamma_p=\sup\{t\in\mathbb{R}|\,(X,B+tf^{\ast}(p)) {\rm \:is\: lc\: over\:} p\}.$$
The divisor $M_Z$, called the \textit{moduli part},
is a $\mathbb{Q}$-Cartier divisor and it is nef 
on some birational modification of $Z$
by ~\cite[Theorem 0.2]{Amb04}.
The moduli part should be related to
the birational variation of the fibres of $f$.\\
This is true for instance in the case of elliptic fibrations.
Indeed, if $f\colon X\rightarrow Z$ is an elliptic fibration,
we have Kodaira's canonical bundle formula
$$K_X=f^{\ast}(K_Z+B_Z+M_Z)$$
and
$$12M_Z\sim j^{\ast}\mathcal{O}_{\mathbb{P}^1}(1)$$
where $j\colon Z\rightarrow \mathbb{P}^1=\mathcal{M}_1$ is the application induced by $f$
to the moduli space of elliptic curves.
In particular $M_Z$ is semiample.

In ~\cite{ProkShok} Prokhorov and Shokurov state the following conjecture
(in our statement we specify the dimension of the base).
\begin{ebsk}[Effective b-Semiampleness, Conjecture 7.13.3, ~\cite{ProkShok}]\label{effbsemi}
There exists an integer number $m=m(d,r)$
such that for any lc-trivial fibration
$f\colon (X,B)\rightarrow Z$ with dimension of the generic fibre $F$ equal to $d$,
dimension of $Z$ equal to $k$
and Cartier index of $(F,B|_F)$ equal to $r$
there exists a birational morphism $\nu\colon Z'\rightarrow Z$
such that $mM_{Z'}$ is base point free. 
\end{ebsk}
The relevance of the above conjecture is well illustrated for instance
by a result due to X. Jiang, who proved recently in ~\cite{J1}
that Conjecture \textbf{EbS} \ref{effbsemi} implies a uniformity statement
for the Iitaka fibration of \textit{any} variety of positive Kodaira
dimension under the assumption that the fibres have a good minimal model.\\
Moreover Todorov and Todorov-Xu
prove some unconditional uniformity results for the Iitaka fibration
of varieties of Kodaira dimension at most 2 and Kodaira codimension
1 (Todorov ~\cite[Theorem 1.2]{Tod})
and 2 (Todorov and Xu ~\cite[Theorem 1.2]{TodXu}).
Their proofs relie on the existence of a bound on the denominators of the moduli part
and, for the case of Kodaira codimension one, on the result by Prokhorov and Shokurov
that proved
conjecture \textbf{EbS} for all $k$
in the case where the fibres are curves (see ~\cite[Theorem 8.1]{ProkShok}).\\
The semiampleness of the moduli part has been also proved for all $k$
if $F$ is isomorphic to a K3 surface or to an abelian variety
by Fujino in ~\cite[Theorem 1.2]{Fuji}.\\
It is worth noticing that the proofs of semiampleness in these cases
use the existence of a moduli space for the fibres.\\

The main goal of this work is to develop an inductive approach to the conjecture \textbf{EbS}.
Our first result is the following.
\begin{teo}\label{semicurvesemitutto}
\textbf{EbS(1)} implies \textbf{EbS(k)}.
\end{teo}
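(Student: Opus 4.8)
The plan is to argue by induction on $k=\dim Z$, the case $k=1$ being exactly \textbf{EbS(1)}. After passing to a birational model I may assume, by \cite{Amb04}, that $Z$ is smooth and projective and that $M_Z$ is nef; the entire subtlety is that the integer $m$ must depend only on the pair $(d,r)$ and not on $k$, so the inductive step is required to propagate \emph{the same} $m$ from bases of dimension $k-1$ to $Z$. Concretely, to reduce the dimension of the base I would cut $Z$ by a general member $H$ of a very ample linear system and restrict the fibration, obtaining $f_H\colon (X_H,B_H)\to H$ with $X_H=f^{-1}(H)$ and $B_H=B|_{X_H}$.

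The first key point is the compatibility of the canonical bundle formula with this restriction. For general $H$, a Bertini argument shows that $f_H$ is again an lc-trivial fibration with the same generic fibre $F$, so that both the fibre dimension $d$ and the Cartier index $r$ of $(F,B|_F)$ are preserved; this is what allows the same $m=m(d,r)$ to appear at every level of the induction. I would then verify that the discriminant and the moduli part restrict compatibly, namely $B_H=B_Z|_H$ and $M_H=M_Z|_H$, which is plausible for general $H$ since the log canonical thresholds defining the discriminant are then computed by the same valuations and $H$ meets the relevant divisors transversally. Granting this, the induction hypothesis applied to $f_H$ gives that $(mM_Z)|_H=mM_H$ is base point free on $H$, for the integer $m$ coming from dimension $k-1$ and uniformly in $H$.

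The second step, which I expect to be the main obstacle, is to deduce that $mM_Z$ itself is base point free, and to do so \emph{without enlarging} $m$: since the reduction is iterated $k-1$ times, any multiplicative loss incurred here would reintroduce a dependence on $k$ and defeat the statement. Because $H$ varies in a base-point-free system and $(mM_Z)|_H$ is already free, the base locus $\mathrm{Bs}\,|mM_Z|$ is disjoint from a general $H$ and hence at most finite. The delicate part is then to remove these finitely many isolated points, which lie off the general $H$: one is led either to a vanishing statement ensuring surjectivity in the restriction sequence $0\to\mathcal O_Z(mM_Z-H)\to\mathcal O_Z(mM_Z)\to\mathcal O_H((mM_Z)|_H)\to 0$, obstructed by the fact that $mM_Z-H$ need not be nef, or to cutting by a section through each bad point and controlling the possibly worse behavior of the fibration there. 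Resolving this — presumably by invoking the finer, Hodge-theoretic positivity of the moduli part rather than its bare nefness, and by arranging the lift to preserve the exact value of $m$ — is where I anticipate the real difficulty to lie.
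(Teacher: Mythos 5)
Your setup coincides with the paper's: the Bertini restriction lemma giving $B_Z|_H=B_H$ and $M_Z|_H=M_H$ (Lemma \ref{modulirestr}), the Kodaira-vanishing sequence giving $H^0(Z,mM_Z)\cong H^0(H,mM_H)$, and the observation that $d$ and $r$, hence $m$, are untouched by the cut. Two corrections before the main point. First, your inductive hypothesis \textbf{EbS(k-1)} only gives freeness of $mM_{H'}$ on \emph{some birational model} $H'\to H$, not on $H$ itself, so you cannot conclude that $(mM_Z)|_H$ is free on $H$; what does restrict well through the induction is the weaker statement that $|mM_Z|$ has no fixed component, i.e.\ ${\rm codim}\,{\rm Bs}|mM_Z|\geq 2$, because under the restriction isomorphism the fixed part of $|mM_Z|$ restricts into the fixed part of $|mM_H|$. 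This is precisely the statement the paper inducts on (Proposition \ref{codim}); its base case works because a proper birational morphism onto a smooth curve is an isomorphism, so \textbf{EbS(1)} does give freeness on $Z$ itself when $\dim Z=1$. Second, the torsion case ($M_Z$ torsion) needs separate treatment, which the paper gives first, proving inductively via the same restriction isomorphism that $mM_Z\cong\mathcal{O}_Z$.

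The genuine gap is your last step, and your guess at how to fill it points in the wrong direction: no vanishing theorem and no Hodge-theoretic refinement of nefness is needed, and one should not try to free $mM_Z$ at the finitely many bad points on $Z$ itself. The statement \textbf{EbS(k)} only requires freeness on \emph{some} birational model, and this flexibility is exactly what closes the argument. Pass to the model $Z'\to Z$ of Theorem \ref{nefness}, on which the moduli part is b-stable: $M_{Z''}=\nu^{\ast}M_{Z'}$ for every proper birational $\nu\colon Z''\to Z'$. Let $\nu\colon\hat Z\to Z'$ be a resolution of the linear system $|mM_{Z'}|$, so that $\nu^{\ast}|mM_{Z'}|=|{\rm Mob}|+{\rm Fix}$ with $|{\rm Mob}|$ base point free. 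By b-stability, $\nu^{\ast}(mM_{Z'})=mM_{\hat Z}$ is again the moduli part of an lc-trivial fibration over $\hat Z$ with the same $(d,r)$, so Proposition \ref{codim} applies on $\hat Z$: the base locus of $|mM_{\hat Z}|$ has codimension $\geq 2$, hence ${\rm Fix}=0$ and $|mM_{\hat Z}|=|{\rm Mob}|$ is free; therefore $|mM_{Z'}|$ is base point free, with the same $m$. In short, the isolated base points you worry about are not removed by extra positivity: they are blown up, and the codimension-two statement, being valid over \emph{every} base, kills the resulting fixed divisor on the blow-up. This step is purely formal and loses nothing on $m$, which is what you feared.
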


An inductive approach on the dimension of the base as in Theorem \ref{semicurvesemitutto}
allows us to prove a result of effective semiampleness
in the case $M_Z\equiv 0$.
Indeed we are able to prove an effective version of ~\cite[Theorem 3.5]{Amb05}.
\begin{teo}\label{mainteo}
There exists an integer number $m=m(b)$
such that for any klt-trivial fibration
$f\colon (X,B)\rightarrow Z$ with
\begin{itemize}
\item $M_Z\equiv 0$;
\item $Betti_{\dim E'}(E')=b$
where $E'$ is a non-singular model of the cover $E\rightarrow F$ 
associated to the unique element of $|r(K_F+B|_F) |$;
\end{itemize}
we have $mM_Z\sim\mathcal{O}_Z$.
\end{teo}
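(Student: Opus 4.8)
The plan is to read off $M_Z$ from a variation of Hodge structure and to show that the hypothesis $M_Z\equiv 0$ forces the underlying monodromy to have finite image, with order controlled by $b$. Recall from the construction of the canonical bundle formula (\cite{Amb04}, \cite{Amb05}) that, after passing to a suitable birational model $Z'$ of $Z$ and a finite base change, the cyclic covers $E\to F$ of the fibres attached to the unique element of $|r(K_F+B|_F)|$ fit into a family $h\colon\mathcal{E}\to Z'$ with smooth fibre $E'$, and that over the open locus $Z'^{\circ}$ where $h$ is smooth the moduli part $M_{Z'}$ is computed as the bottom Hodge piece $\mathcal{L}=F^{\dim E'}$, a line bundle, of the polarized variation of Hodge structure $\mathbb{V}$ carried by the relevant eigenspace of $R^{\dim E'}h_*\mathbb{Q}$. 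In particular $\mathbb{V}$ is an integral local system of rank at most $b=Betti_{\dim E'}(E')$, with monodromy $\rho\colon\pi_1(Z'^{\circ})\to GL(b,\mathbb{Z})$.

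First I would reduce the finiteness of $\mathrm{im}(\rho)$ to the case of a one-dimensional base, which is precisely the inductive philosophy of Theorem \ref{semicurvesemitutto}: by the Zariski--Lefschetz hyperplane theorem the fundamental group of a general complete-intersection curve $C\subset Z'$ surjects onto $\pi_1(Z'^{\circ})$, so $\mathrm{im}(\rho)$ is unchanged by restriction to $C$. On $C$ the hypothesis gives $\deg(\mathcal{L}|_C)=(M_{Z'}\cdot C)=0$. By Fujita--Kawamata semipositivity $\mathcal{L}$ is nef, and the vanishing of its degree forces the Higgs field of $\mathbb{V}$ on the top graded piece to vanish; hence, by Deligne semisimplicity of polarized variations of Hodge structure, the relevant sub-local-system of $\mathbb{V}|_C$ is unitary. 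Being at the same time defined over $\mathbb{Z}$, a unitary local system has finite monodromy. This is the effective incarnation of \cite[Theorem 3.5]{Amb05}; the new point is only that the ambient group is $GL(b,\mathbb{Z})$.

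The effectivity then comes from Minkowski's theorem: there is an explicit integer $M(b)$, a common multiple of the orders of all finite subgroups of $GL(b,\mathbb{Z})$, such that $|\mathrm{im}(\rho)|$ divides $M(b)$. Writing $N=|\mathrm{im}(\rho)|$, finite monodromy means that $\mathbb{V}$, and with it the Hodge line bundle $\mathcal{L}$, becomes trivial on the finite cover $\pi\colon\tilde Z\to Z'$ of degree $N$ attached to $\ker\rho$ (its Deligne extension is torsion), so $\pi^*M_{Z'}\sim 0$. Taking norms along $\pi$ gives $\mathrm{Nm}_\pi(\pi^*M_{Z'})=N\,M_{Z'}\sim 0$, whence $M(b)\,M_{Z'}\sim\mathcal{O}_{Z'}$; descending along $Z'\to Z$ yields $m\,M_Z\sim\mathcal{O}_Z$ with $m=m(b):=M(b)$.

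The main obstacle I expect is the second step, namely passing from $M_Z\equiv 0$ to finiteness of the monodromy while keeping the bound purely in terms of $b$. Two points require care. First, one must check that the birational and finite-base-change modifications used to make $M_{Z'}$ descend and to put $h\colon\mathcal{E}\to Z'$ in semistable form do not enlarge the rank of the relevant eigen-local-system beyond $b$, so that the target of $\rho$ is genuinely $GL(b,\mathbb{Z})$. Second, the Hodge-theoretic implication ``degree zero $\Rightarrow$ vanishing Higgs field $\Rightarrow$ unitary'' on the curve rests on the curvature formula for the Hodge metric together with semisimplicity; once it is in place, the reduction to curves and Minkowski's bound are formal.
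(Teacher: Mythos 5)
Your overall strategy is the paper's own: reduce to $\dim Z=1$, use the cyclic cover $V\to X$ so that $M_Z$ becomes a rank-one direct summand of $h_{\ast}\omega_{V/Z}$, use Fujita's semipositivity plus $\deg M_Z=0$ to make that summand a flat subsystem of the VHS, and then use Deligne semisimplicity to conclude it is torsion of order bounded in terms of $b$. The gap is in the step that produces the effective bound. You assert that the relevant sub-local-system is unitary \emph{and} ``defined over $\mathbb{Z}$'', so that its monodromy is a finite subgroup of $GL(b,\mathbb{Z})$ and Minkowski's theorem applies. But the subsystem carrying $M_Z$ is not integral: it is the eigen-component of $R^{d}h_{\ast}\mathbb{C}$ for a primitive $r$-th root of unity under the $\mu_r$-action (Proposition \ref{mautofascio}), hence defined at best over the cyclotomic field $\mathbb{Q}(\zeta_r)$, and the flat piece spanned by $M_Z$ inside it is just a rank-one $\mathbb{C}$-subsystem with monodromy a character $\chi\colon\pi_1(Z_0)\to\mathbb{C}^{\ast}$. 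Unitarity alone ($|\chi|=1$) does not give finite order, and $\chi$ does not land in $GL(b,\mathbb{Z})$, so Minkowski has nothing to act on. (You cannot instead apply the finiteness argument to the full integral system $R^{d}h_{\ast}\mathbb{Z}$, because $M_Z\equiv 0$ only flattens the rank-one piece, not the whole VHS, so finiteness of the full image of $\rho$ is not available.) This is precisely the point that Corollary \ref{gabolomixed} settles: since $\chi$ occurs inside a polarized VHS whose underlying local system is rational, every Galois conjugate of $\chi$ also occurs there and also has modulus one, so $\chi(\gamma)$ is an algebraic integer all of whose conjugates lie on the unit circle, hence a root of unity by Kronecker; counting conjugates inside a $b$-dimensional space bounds its order $k$ by $\phi(k)\le b$, which is what yields $m(b)={\rm lcm}\{k\,:\,\phi(k)\le b\}$. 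Your proposal needs this Galois-conjugate argument; it cannot be replaced by ``integral $+$ unitary $\Rightarrow$ finite''.

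There is a second, quantitative, gap in the descent. Before $M_{Z'}$ can be identified with a summand of $h'_{\ast}\omega_{V'/Z'}$ you must perform Kawamata's base change $\tau\colon Z'\to Z$ (Theorem \ref{kawamio}) to make the monodromies unipotent. Your norm argument correctly descends a linear equivalence along the monodromy-kernel cover, whose degree you have bounded by $M(b)$; but descending $m M_{Z'}\sim 0$ back along $\tau$ by the same trick gives only $(\deg\tau)\,m\,M_Z\sim 0$, and $\deg\tau$ is \emph{not} bounded by any function of $b$, so the resulting constant would depend on the fibration. The paper avoids exactly this loss: it exploits the explicit structure of Kawamata's covering as a tower of cyclic covers with $\tau_{j\ast}\mathcal{O}\cong\bigoplus_{l}A_j^{-l}$, $A_j$ very ample, together with $M_Z\equiv 0$, so that all twists $m(b)M_{Z_j}\otimes A_j^{-l}$ with $l>0$ have negative degree and no sections, forcing $H^0(Z_{j+1},m(b)M_{Z_{j+1}})\cong H^0(Z_j,m(b)M_{Z_j})\cong\mathbb{C}$ and hence the \emph{same} $m(b)$ at each floor of the tower. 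Without an argument of this type (or some other device showing the covering degree never enters), your final integer is not a function of $b$ alone, which is the whole content of the theorem.
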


Moreover for the case where the pair $(X,B)$ is lc but not klt 
on the generic point of the base we have the following
\begin{teo}\label{mainteolc}
Let $f\colon (X,B)\rightarrow Z$ be an lc-trivial fibration
 with $M_Z\equiv 0$. Then $M_Z$ is torsion.
\end{teo}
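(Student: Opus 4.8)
\textbf{The plan} is to exploit the Hodge-theoretic nature of the moduli part and, crucially, the fact that here we only ask for torsion and not for an effective bound, so we are free to perform arbitrary finite base changes. After a birational modification of $Z$ and a generically finite base change, I would use the covering trick associated to the unique element of $|r(K_F+B|_F)|$ to realise $M_Z$ through a polarised integral variation of Hodge structure $\mathbb{V}$ on the smooth locus $U\subseteq Z$ of the fibration, carried by the middle cohomology of the cover $E\to F$. Concretely, up to a positive rational multiple and up to $\mathbb{Q}$-linear equivalence, $M_Z$ is the first Chern class of the determinant of the Deligne canonical extension of the lowest piece of the Hodge filtration of $\mathbb{V}$, exactly as in the construction underlying \cite[Theorem 3.5]{Amb05} and the nefness statement \cite[Theorem 0.2]{Amb04}.

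The main argument then runs as follows. First I would show that $M_Z\equiv 0$ forces the period map to be constant: by the semipositivity theorem of Fujita and Kawamata the canonical extension of the lowest Hodge bundle is nef, so a numerically trivial determinant forces it to be numerically flat, whence the Kodaira--Spencer (Higgs) map of $\mathbb{V}$ vanishes and the period map $U\to D/\Gamma$ to the relevant quotient of the period domain is constant. Next I would upgrade constancy to finiteness of the monodromy: a constant period map means the monodromy fixes a point of $D$, so its image lies in a compact subgroup of the real automorphism group of the polarised Hodge structure, while at the same time it lies in the discrete arithmetic group $\mathrm{Aut}(H_{\mathbb{Z}},Q)$; a discrete subgroup of a compact group is finite, so the monodromy of $\mathbb{V}$ is finite. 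Finiteness of the monodromy then provides a finite \'etale cover of $U$ trivialising $\mathbb{V}$, and taking the normalisation of $Z$ in the corresponding function field yields a finite surjective morphism $\pi\colon Z''\to Z$ of normal projective varieties over which the variation becomes constant, so that $M_{Z''}\sim_{\mathbb{Q}}0$. By the compatibility of the moduli part with finite base change one has $\pi^{\ast}M_Z\sim_{\mathbb{Q}}M_{Z''}\sim_{\mathbb{Q}}0$, and applying the norm map $\mathrm{Nm}\colon\mathrm{Pic}(Z'')\to\mathrm{Pic}(Z)$, together with the identity $\mathrm{Nm}(\pi^{\ast}L)=L^{\otimes\deg\pi}$, shows that $M_Z$ is torsion.

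\textbf{The hard part} will be the very first step in the lc (non-klt) regime: identifying $M_Z$ with a genuinely semipositive Hodge-theoretic object when $(X,B)$ is lc but not klt over the generic point of $Z$. In that situation the cover produced by the covering trick and its cohomology are governed by a mixed, rather than pure, Hodge structure coming from the lc centres that dominate $Z$, so one must either argue that $M_Z$ is read off from a pure sub-quotient to which the Fujita--Kawamata semipositivity and Schmid's nilpotent-orbit estimates apply, or else reduce directly to the klt case by adjunction to a minimal lc centre dominating $Z$ and invoking \cite[Theorem 3.5]{Amb05} there. Once such a semipositive Hodge model is secured, the vanishing-curvature, finite-monodromy and descent steps above are formal, and they deliver the torsion of $M_Z$ with no control on the order, consistently with the non-effective nature of the statement.
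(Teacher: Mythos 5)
Your skeleton (flatness from $M_Z\equiv 0$, then finite monodromy, then descent by a norm argument) is the same family of ideas as the paper's, and your final descent step is essentially identical to the paper's non-unipotent reduction. But two central steps fail as stated, and the point that you yourself flag as "the hard part" --- which is the entire content of this theorem, since the klt case is already Theorem \ref{mainteo} --- is left as a fork between two unexecuted alternatives. First, the identification on which your argument rests is not what the cited constructions give: $M_Z$ is \emph{not}, up to a positive rational multiple, the determinant of the lowest Hodge piece of the middle-cohomology variation of the cover. By Proposition \ref{mautofascio}, $\mathcal{O}_Z(M_Z)$ is the single rank-one eigensheaf, for a fixed primitive $r$-th root of unity, of the $\mu_r$-action on $h_{\ast}\omega_{V/Z}$ (on $h_{\ast}\omega_{V/Z}(P_V)$ in the lc case); the other eigen-summands $f_{\ast}\mathcal{O}_X(\lceil(1-i)K_{X/Z}-iB+if^{\ast}B_Z+if^{\ast}M_Z\rceil)$ are not controlled by $M_Z$ and may well have positive degree. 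Hence $M_Z\equiv 0$ does not make the determinant of the lowest Hodge bundle numerically trivial, the chain "nef $+$ trivial determinant $\Rightarrow$ numerically flat $\Rightarrow$ vanishing Higgs field $\Rightarrow$ constant period map" never starts, and indeed constancy of the period map is not a consequence of $M_Z\equiv 0$ at all (nor is it needed). What is true, and what the paper proves (Proposition \ref{subsystem}, via Kawamata's degree formula, Lemma \ref{kawa21}), is only that this one rank-one piece carries a flat induced metric connection, i.e.\ is a flat \emph{complex} subsystem. This in turn breaks your finiteness step: "discrete arithmetic group $\cap$ compact group is finite" requires the monodromy to act on an integral lattice, but the flat rank-one piece is only a $\mathbb{C}$-subsystem (an eigenspace, defined at best over a cyclotomic field) with no integral structure. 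The correct substitute is Deligne's semisimplicity theorem plus a Kronecker-type argument: the monodromy character has modulus one and at most $b$ Galois conjugates, all forced to occur inside the integral variation, hence is a root of unity --- this is exactly Corollary \ref{gabolomixed} in the paper.

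Second, for the lc case the paper does carry out (a sharpened form of) your "route (a)", and the work it requires is precisely what is missing from your proposal. One considers the variation of \emph{mixed} Hodge structure attached to the complement of the horizontal lc places $P_V$ in the cover, proves that the $\mu_r$-action preserves the weight filtration (Lemma \ref{preservs}, which needs an explicit computation on log forms under blow-ups), and uses this to place $\mathcal{O}_Z(M_Z)|_{Z_0}$ as a direct summand of a \emph{pure} graded piece $\mathcal{W}_l/\mathcal{W}_{l-1}$; only then can the curvature argument (Proposition \ref{subsystem}) and Corollary \ref{gabolomixed} be applied, after which unipotent reduction and the degree/norm argument finish, as in your last step. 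Your alternative "route (b)" (adjunction to a minimal lc centre dominating $Z$) is not what the paper does and would itself require a nontrivial comparison between $M_Z$ and the moduli part of the induced fibration on the centre. As written, therefore, your proposal neither establishes the flatness statement correctly in the pure case nor performs the mixed-to-pure reduction that the lc hypothesis demands.
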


In section 2 we recall some general definitions and results
concerning the canonical bundle formula.
Section 3 is devoted to the proof of Theorem \ref{semicurvesemitutto}.
Section 4 and section 5 contain several results that will be useful in the proofs
of Theorem \ref{mainteo} and Theorem \ref{mainteolc} that are the subject of section 6.

The techniques that we use in the proof of Theorem \ref{mainteo}
come from the theory of variations of Hodge structures as in ~\cite[Theorem 3.5]{Amb05}.
The integer $m(b)$ is determined in the unipotent case by using the semisimplicity theorem of Deligne
~\cite[Theorem 4.2.6]{DeligneII}.
Then we show that the same integer works in the general case.

The proof of Theorem \ref{mainteolc} consists in adaptating
the proof of Theorem \ref{mainteo} to the more general setting of variation of mixed Hodge structures.

\bigskip

\thanks{{\bfseries Acknowledgements.}
I would like to express my gratitude to my Ph.D advisor, Gianluca Pacienza,
for bringing my attention to this problem and for his generous help.
This work owes a great deal to his influence. 
}

\section{Notations, definitions and known results}
We will work over $\mathbb{C}$.
In the following $\equiv$, $\sim$ and $\sim_{\mathbb{Q}}$ will respectively indicate
numerical, linear and $\mathbb{Q}$-linear equivalence of divisors.
The following definitions are taken from ~\cite{KM}.
\begin{defi}
A pair $(X,B)$ is the data of a normal variety $X$ and a $\mathbb{Q}$-Weil divisor $B$ 
such that $K_X+B$ is $\mathbb{Q}$-Cartier.
\end{defi}

\begin{defi}
Let $(X,B)$ be a pair and write $B=\sum b_i B_i$.
Let $\nu\colon Y\rightarrow X$ be a birational morphism, $Y$ normal.
We can write
$$K_Y\equiv \nu^{\ast}(K_X+B)+\sum a(E_i,X,B) E_i.$$
where $E_i\subseteq Y$ are distinct prime divisors and $a(E_i,X,B)\in\mathbb{R}$.
Furthermore we adopt the convention that a nonexceptional divisor $E$ appears in the sum
if and only if $E=\nu_{\ast}^{-1}B_i$ for some $i$
and then with coefficient $a(E,X,B)=-b_i$.\\
The $a(E_i,X,B)$ are called discrepancies.

A divisor $E$ is exceptional over $X$ if there exists a birational morphism $\nu\colon Y\rightarrow X$
and $E\subseteq Y$ and it is exceptional for $\nu$.
\end{defi}

\begin{defi}
Let $(X,B)$ be a pair and $f\colon X\rightarrow Z$ be a morphism.
Let $o\in Z$ be a point.
A log resolution of $(X,B)$ over $o$
is a birational morphism $\nu\colon X'\rightarrow X$
such that for all $x\in f^{-1}o$
the divisor $\nu^{\ast}(K_X+B)$ is
simple normal crossing at $x$.
\end{defi}

\begin{defi}
We set
$$\rm{discrep}(X,B)
=\inf \{a(E,X,B)\;|\; E\, exceptional\, divisor\, over\, X\}.$$
A pair $(X,B)$ is defined to be
\begin{itemize}
\item klt (kawamata log terminal) if $\rm{discrep}(X,B)>-1$,
\item lc (log canonical) if $\rm{discrep}(X,B) \geq -1.$
\end{itemize}
\end{defi}

\begin{defi}
Let $f\colon (X,B)\rightarrow Z$ be a morphism and $o\in Z$ a point.
For an exceptional divisor $E$ over $X$ we set
$c(E)$ its image in $X$.
We set $$\rm{discrep}_o(X,B)
=\inf \{a(E,X,B)\;|\; E\, {\rm exceptional}\, {\rm divisor}\, {\rm over}\, X,\; f(c(E))=o\}.$$
A pair $(X,B)$ is defined to be
\begin{itemize}
\item klt over $o$ (kawamata log terminal) if $\rm{discrep}_o(X,B)>-1$,
\item lc over $o$ (log canonical) if $\rm{discrep}_o(X,B) \geq -1.$
\end{itemize}
\end{defi}

\begin{defi}
Let $(X,B)$ be a pair.
A place for $(X,B)$ is a prime divisor on some birational model $\nu \colon Y\rightarrow X$ of $X$
such that $a(E,X,B)=-1$.
The image of $E$ in $X$ is called a centre.
\end{defi}

\begin{defi}
Let $(X,B)$ be a pair and $\nu\colon X'\rightarrow X$
a log resolution of the pair.
We set $$A(X,B)=K_{X'}-\nu^{\ast}(K_X+B)$$
and $$A^{\ast}(X,B)=A(X,B)+\sum_{a(E,X,B)=1} E.$$
\end{defi}

\begin{defi}
A klt-trivial (resp. lc-trivial) fibration $f \colon (X,B) \rightarrow Z$ consists of a contraction
of normal varieties $f \colon X \rightarrow Z$ and of a log pair $(X,B)$ satisfying
the following properties:
\begin{enumerate}
\item $(X,B)$ has klt (resp. lc) singularities over the generic
point of $Z$;
\item ${\rm rank}\, f'_{\ast}\,\mathcal{O}_X(\lceil A(X,B)\rceil) = 1$
(resp. ${\rm rank}\, f'_{\ast}\,\mathcal{O}_X(\lceil A^{\ast}(X,B)\rceil) = 1$)
where $f'=f\circ\nu$ and $\nu$ is a given log resolution of the pair $(X,B)$;
\item there exists a positive integer $r$, a rational function $\varphi\in k(X)$
and a $\mathbb{Q}$-Cartier divisor $D$ on $Z$ such that
$$K_X + B +\frac{1}{r}(\varphi) = f^{\ast}D.$$
\end{enumerate}
\end{defi}

\begin{oss}
{\rm
Condition (2) is verified for instance if $B$ is effective because
$$\lceil A^{\ast}(X,B)\rceil=\lceil K_{X'}-\nu^{\ast}(K_X+B)+\sum_{a(E,X,B)=1} E\rceil$$
is exceptional.
}
\end{oss}
\begin{oss}\label{cartindex}
{\rm
The smallest possible $r$ is the minimum of the set
$$\{m\in\mathbb{N}|\,m(K_X+B)|_F\sim 0\}$$
that is the Cartier index of the fibre.
We will always assume that the $r$ that appears in the formula is the smallest one.
}
\end{oss}

\begin{defi}
Let $p\subseteq Z$ be a codimension one point.
The log canonical threshold of $f^{\ast}(p)$ with respect to the pair $(X,B)$ is
$$\gamma_p=\sup\{t\in\mathbb{R}|\,(X,B+tf^{\ast}(p)) {\rm \:is\: lc\: over\:} p\}.$$
We define the {\rm discriminant} of $f \colon (X,B) \rightarrow Z$ as
\begin{eqnarray}\label{discriminant}
B_Z&=&\sum_{p}(1-\gamma_p)p.
\end{eqnarray}
\end{defi}
We remark that, since the above sum is finite, $B_Z$ is a $\mathbb{Q}$-Weil divisor.

\begin{defi}
Fix $\varphi\in\mathbb{C}(X)$ such that $K_X + B +\frac{1}{r}(\varphi) = f^{\ast}D$.
Then there exists a unique divisor $M_Z$ such that we have
\begin{eqnarray}\label{cbf}
K_X + B +\frac{1}{r}(\varphi) &=& f^{\ast}(K_Z+B_Z+M_Z)
\end{eqnarray}
where $B_Z$ is as in (\ref{discriminant}).
The $\mathbb{Q}$-Weil divisor $M_Z$ is called the {\rm moduli part}.
\end{defi}
We have the two following results.\\
\begin{teo}[Theorem 0.2 ~\cite{Amb04}, ~\cite{Corti}]\label{nefness}
Let $f \colon(X,B)\rightarrow Z$ be an lc-trivial fibration.
Then there exists a proper birational morphism
$Z'\rightarrow Z$ with the following properties:
\begin{description}
\item[(i)] $K_{Z'}+B_{Z'}$ is a $\mathbb{Q}$-Cartier divisor, 
and $\nu^{\ast}(K_{Z'}+B_{Z'}) = K_{Z''}+B_{Z''}$
for every proper birational morphism $\nu \colon Z''\rightarrow Z'$.
\item[(ii)] $M_{Z'}$ is a nef $\mathbb{Q}$-Cartier divisor and $\nu^{\ast}(M_{Z'}) = M_{Z''}$ for every
proper birational morphism $\nu \colon Z''\rightarrow Z'$.
\end{description}
\end{teo}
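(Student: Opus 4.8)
The plan is to organise both assertions as statements about the birational behaviour of the discriminant and the moduli part, viewed as $b$-divisors on $Z$, and to prove that each stabilises on a sufficiently high birational model. For every proper birational $Z''\to Z$ one performs the base change, takes a common log resolution $X''$ of $(X,B)$ and of the fibre product, and defines the discriminant $B_{Z''}$ by the log-canonical-threshold recipe of (\ref{discriminant}) and the moduli part $M_{Z''}$ by the defining equality (\ref{cbf}) on $Z''$. In the sense of $b$-divisors these assemble into a canonical part $\mathbf{K}_Z+\mathbf{B}$ and a moduli part $\mathbf{M}$, and the theorem amounts to saying that both descend to one and the same model $Z'$ and are then pulled back compatibly from it, with $\mathbf{M}$ moreover nef there. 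First I would fix once and for all a model over which $f$ is \emph{prepared}: after a finite base change I arrange that $\mathrm{Supp}\,B_Z$ is simple normal crossing, that $f$ is log smooth over its complement, and that the local monodromies of the relevant variation of Hodge structure are unipotent; standard semistable-reduction and Kawamata-covering techniques reduce the general case to this one.

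For item (i), the point is that over such a prepared model the log-canonical thresholds defining $B_{Z''}$ no longer change under further blow-ups: the coefficient attached to a prime over a boundary point is computed from the toroidal normal-crossing structure of $f$, so $K_{Z''}+B_{Z''}$ is the pullback of $K_{Z'}+B_{Z'}$ for every $\nu\colon Z''\to Z'$. Concretely I would check that for an exceptional $E$ over $Z'$ the discrepancy of $\mathbf{K}_Z+\mathbf{B}$ along $E$ equals the value predicted by pullback, which is exactly the statement that $\mathbf{K}_Z+\mathbf{B}$ is $b$-Cartier and descends to $Z'$.

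The heart of the matter is item (ii), the nefness of $M_{Z'}$, and here I would use the theory of variations of Hodge structure. After passing to the cyclic cover $E\to F$ attached to the unique element of $|r(K_F+B|_F)|$ and spreading it out over $Z$, one obtains a family whose middle cohomology carries a polarised variation of Hodge structure $\mathcal V$; the moduli part is identified, up to $\mathbb{Q}$-linear equivalence and a correction supported on $B_{Z'}$, with the first Chern class of the Deligne canonical extension of the bottom piece $\mathcal F$ of the Hodge filtration of $\mathcal V$, which is a line bundle via the cyclic-cover eigendecomposition. The required positivity is then the Fujita--Kawamata semipositivity theorem: the canonical extension of the bottom Hodge piece of a polarised variation with unipotent monodromy along a simple-normal-crossing boundary is a nef line bundle. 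This yields nefness of $M_{Z'}$ on the prepared model.

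The main obstacle is twofold. First, one must make the identification of $M_{Z'}$ with the Hodge line bundle \emph{precise}, controlling the boundary contribution: the difference between the naive Hodge bundle and $M_{Z'}$ is supported on $\mathrm{Supp}\,B_{Z'}$ and must be shown to cancel exactly, because the thresholds in (\ref{discriminant}) are tuned to match the weights of the canonical extension, which is precisely where unipotence and the normal-crossing hypothesis enter. Second, one must descend the conclusion from the prepared cover back to $Z$ and verify compatibility under every further $\nu\colon Z''\to Z'$: since the base change is finite and the additional blow-ups lie over the normal-crossing boundary, the canonical extension is pulled back compatibly, so $\nu^{\ast}M_{Z'}=M_{Z''}$ and likewise $\nu^{\ast}(K_{Z'}+B_{Z'})=K_{Z''}+B_{Z''}$. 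Assembling these gives both the $b$-Cartier property of $K_{Z'}+B_{Z'}$ and the $b$-nefness of $M_{Z'}$ asserted in the theorem.
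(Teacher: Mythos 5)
This theorem is not proved in the paper at all: it is imported as a known result, namely Theorem 0.2 of \cite{Amb04} together with its lc extension in \cite{Corti}, and is used as a black box throughout. So the only meaningful comparison is with the cited proof of Ambro, and your outline is in fact that proof's strategy: view the discriminant and moduli part as $b$-divisors, pass to a prepared model (simple normal crossing discriminant, log smoothness over its complement, unipotent monodromy after a Kawamata covering), descend $K+B_Z$ via the stability of the log-canonical thresholds on such a model, identify the moduli part Hodge-theoretically through the cyclic cover, and get nefness from semipositivity. Route-wise, you and the source coincide.

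As a proof, however, your text has genuine gaps, and you in fact name them yourself. The two items you call ``the main obstacle'' --- the exact identification of $M_{Z'}$ with an eigensheaf of the canonical extension of the bottom Hodge piece, with the boundary contribution cancelling against the discriminant, and the compatibility of all of this under further birational pullback and under descent from the finite cover --- are not side issues to be checked: they constitute essentially the whole of Ambro's argument (in the notation of this paper, Proposition \ref{mautofascio} and Proposition \ref{basechange}). Stating that the boundary terms ``must be shown to cancel exactly'' is a formulation of the problem, not a solution of it. Beyond that, two concrete points are off. First, $\mathrm{Supp}\,B_Z$ is made simple normal crossing by a \emph{birational} modification, not by a finite base change; the finite (Kawamata) cover is what produces unipotent monodromy, and one must then invoke Proposition \ref{basechange} together with the fact that nefness and the pullback identities descend along a finite surjective morphism to return to $Z$. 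Second, and more seriously, for an lc-trivial fibration in this paper's sense --- lc, not klt, over the generic point of $Z$ --- the polarised \emph{pure} variation of Hodge structure on the middle cohomology of the cyclic cover is not the right object, and Fujita--Kawamata semipositivity for it does not suffice: one must work with the variation of \emph{mixed} Hodge structure on the complement of the horizontal lc places, which is precisely why \cite{Corti} is cited alongside \cite{Amb04}, and is the same difficulty the paper itself confronts in Theorem \ref{mainteolc} via the weight filtration and Proposition \ref{subsystem}. As written, your sketch would prove the theorem only for klt-trivial fibrations.
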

\begin{prop}[Proposition 5.5 ~\cite{Amb04}]\label{basechange}
Let $f\colon (X,B)\rightarrow Z$ be an lc-trivial fibration.
Let $\tau\colon Z'\rightarrow Z$ be a generically finite projective
morphism from a non-singular variety $Z'$. Assume there exists a simple
normal crossing divisor $\Sigma_{Z'}$ on $Z'$ which contains $\tau^{-1}\Sigma_Z$ and the
locus where $\tau$ is not \'etale. Let $M_{Z'}$ be the moduli part of the induced
lc-trivial fibration $f'\colon (X',B')\rightarrow Z'$.
Then $M_{Z'}=\tau^{\ast}M_Z$.
\end{prop}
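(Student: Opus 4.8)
The plan is to pull back the canonical bundle formula of $f$ to the total space of the base change and to match it with the formula of the induced fibration, thereby reducing the statement to a crepant relation between the two discriminants. Let $\pi\colon X'\to X$ be the morphism induced by $\tau$ on a normal model of the dominant component of $X\times_Z Z'$, so that $f\circ\pi=\tau\circ f'$, and define the boundary by log pullback, $K_{X'}+B'=\pi^{\ast}(K_X+B)$; this is exactly the pair for which $f'\colon(X',B')\to Z'$ is the base-changed lc-trivial fibration. Taking $\varphi'=\pi^{\ast}\varphi$ and applying $\pi^{\ast}$ to (\ref{cbf}) gives
\[
K_{X'}+B'+\frac{1}{r}(\varphi')=\pi^{\ast}f^{\ast}(K_Z+B_Z+M_Z)={f'}^{\ast}\tau^{\ast}(K_Z+B_Z+M_Z),
\]
while the formula (\ref{cbf}) for $f'$ reads $K_{X'}+B'+\frac{1}{r}(\varphi')={f'}^{\ast}(K_{Z'}+B_{Z'}+M_{Z'})$. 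Since $f'$ is a contraction, ${f'}^{\ast}$ is injective on $\mathbb{Q}$-Cartier divisors, so $K_{Z'}+B_{Z'}+M_{Z'}=\tau^{\ast}(K_Z+B_Z+M_Z)$. It therefore suffices to prove the crepant relation
\[
K_{Z'}+B_{Z'}=\tau^{\ast}(K_Z+B_Z);
\]
call it $(\star)$. Subtracting $(\star)$ from the previous identity yields $M_{Z'}=\tau^{\ast}M_Z$.

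To establish $(\star)$ I argue over codimension-one points. Fix a prime divisor $p$ on $Z$ and a prime divisor $p'$ on $Z'$ with $\tau(p')=p$, and let $e_{p'}$ be the ramification index of $\tau$ along $p'$. Using the Hurwitz formula $K_{Z'}=\tau^{\ast}K_Z+\sum_{p'}(e_{p'}-1)p'$ and $\tau^{\ast}p=\sum_{p'\to p}e_{p'}p'$, comparing the coefficient of $p'$ on the two sides of $(\star)$ shows that $(\star)$ is equivalent to the pointwise identity
\[
\gamma_{p'}=e_{p'}\,\gamma_p
\]
for every such pair $(p,p')$. Away from $\Sigma_{Z'}$ the morphism $\tau$ is \'etale in codimension one, so there $e_{p'}=1$ and the identity reduces to the evident $\gamma_{p'}=\gamma_p$; the whole content is concentrated on the finitely many divisors contained in $\Sigma_{Z'}$, which is where the hypothesis that $\Sigma_{Z'}$ contains $\tau^{-1}\Sigma_Z$ and the non-\'etale locus is used.

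The heart of the matter is the identity $\gamma_{p'}=e_{p'}\gamma_p$. Near the generic point of $p'$ one has $\tau^{\ast}p=e_{p'}p'$, hence $\pi^{\ast}f^{\ast}p={f'}^{\ast}\tau^{\ast}p=e_{p'}{f'}^{\ast}p'$, and applying $\pi^{\ast}$ to $K_X+B+t f^{\ast}p$ gives, over a neighbourhood of $p'$,
\[
K_{X'}+\bigl(B'+t\,e_{p'}{f'}^{\ast}p'\bigr)=\pi^{\ast}\bigl(K_X+B+t f^{\ast}p\bigr).
\]
Thus $(X',B'+t\,e_{p'}{f'}^{\ast}p')$ is the crepant log pullback of $(X,B+t f^{\ast}p)$ along the finite morphism $\pi$. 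I would stress here that $B'$ is a genuine log pullback and need not be effective: along the fibres over the ramification divisor of $\tau$ it carries the negative contributions $-(e_{p'}-1){f'}^{\ast}p'$ coming from the ramification of $\pi$, and it is precisely these terms that absorb the Hurwitz correction in $(\star)$. Invoking the behaviour of log discrepancies under a finite crepant morphism, namely $a(E',X',B')+1=e(E'/E)\bigl(a(E,X,B)+1\bigr)$ for a divisor $E'$ over $X'$ dominating a divisor $E$ over $X$ (see for instance \cite[Proposition 5.20]{KM}), log-canonicity is preserved: $(X,B+t f^{\ast}p)$ is lc over $p$ if and only if $(X',B'+t\,e_{p'}{f'}^{\ast}p')$ is lc over $p'$. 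By the definition of the thresholds the first condition means $t\le\gamma_p$ and the second means $t\,e_{p'}\le\gamma_{p'}$, whence $\gamma_{p'}=e_{p'}\gamma_p$ and $(\star)$ follows.

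The main obstacle I anticipate is making the local log-canonicity comparison rigorous on the possibly singular space $X'$: one must work on a normal model (the normalisation of the dominant component of the fibre product) for the finite-cover discrepancy formula to apply, and one must check that the divisors over $X$ centred over $p$ and those over $X'$ centred over $p'$ correspond under $\pi$, so that the equivalence of log-canonicity holds in both directions rather than as a single implication. This is exactly the work done by the normal-crossing hypothesis on $\Sigma_{Z'}$, which also guarantees that $f'$ is again an lc-trivial fibration to which the canonical bundle formula and the definition of the discriminant apply. The remaining points are bookkeeping: that $\varphi'=\pi^{\ast}\varphi$ may be used with the same $r$, and that $M_{Z'}$ does not depend on the chosen model $X'$ over $Z'$, both of which follow from the birational invariance of the discriminant and the moduli part (Theorem \ref{nefness}).
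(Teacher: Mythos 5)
The paper itself offers no proof of this proposition: it is quoted verbatim from Ambro (\cite[Proposition 5.5]{Amb04}), so your attempt can only be measured against what a correct proof requires. Your skeleton is the right one: matching the pulled-back formula for $f$ with the formula for $f'$ (with the compatible choices $\varphi'=\pi^{\ast}\varphi$, $K_{X'}+B'=\pi^{\ast}(K_X+B)$) and using injectivity of ${f'}^{\ast}$ correctly reduces everything to the crepant identity $K_{Z'}+B_{Z'}=\tau^{\ast}(K_Z+B_Z)$. But your proof of that identity has a genuine gap.

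Your coefficient computation only treats prime divisors $p'\subset Z'$ which dominate a prime divisor $p\subset Z$; in other words, it implicitly assumes $\tau$ is \emph{finite}. The proposition allows $\tau$ to be generically finite, so $Z'$ may carry $\tau$-exceptional divisors $p'$ with $\operatorname{codim}\tau(p')\geq 2$. These lie in the non-\'etale locus, hence inside $\Sigma_{Z'}$, and for them there is no ramification index: the Hurwitz formula $K_{Z'}=\tau^{\ast}K_Z+\sum(e_{p'}-1)p'$ you invoke is simply false, and the identity required by $(\star)$ at such a divisor is of a different nature, namely $\gamma_{p'}=1+a(p';Z,B_Z)$, an equality between a relative lc threshold upstairs and a log discrepancy downstairs. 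This is not a consequence of the finite crepant discrepancy formula \cite[Proposition 5.20]{KM}; proving it requires the structural hypotheses that are implicit in the statement (in \cite{Amb04}, $\Sigma_Z$ is a fixed snc divisor over whose complement $f$ is log smooth, i.e.\ the fibration is \emph{prepared}), hypotheses which your argument never uses. This is not a cosmetic omission: the compatibility of the discriminant and moduli b-divisors with \emph{birational} base change is exactly the delicate content here --- it is the reason Theorem \ref{nefness} asserts descent of $M$ only on a sufficiently high birational model rather than on any smooth model --- so the exceptional divisors cannot be dispatched by the same bookkeeping as the finite part.

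There is a second, smaller gap in the finite part itself. The equivalence of log canonicity must go both ways, and the problematic direction is the one you defer: given a non-lc place $E$ over $X$ centred over $p$, you must produce a non-lc place $E'$ over $X'$ centred over the \emph{chosen} component $p'$ of $\tau^{-1}(p)$, not merely over some component. This is not ``exactly the work done by the normal-crossing hypothesis on $\Sigma_{Z'}$'', as you claim; the snc hypothesis plays no role in it. One needs either to pass to a Galois closure of $K(Z')/K(Z)$ and use equivariance of the crepant pullback to transport non-lc places between the components over $p$, or to localize (complete) at the generic point of $p$ so that only one component survives. Without this step your argument yields only $\gamma_{p'}\geq e_{p'}\gamma_p$, hence only the inequality $M_{Z'}\geq\tau^{\ast}M_Z$, not the asserted equality.
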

\begin{teo}[Inverse of adjunction, Proposition 3.4, ~\cite{Amb99}, see also Theorem 4.5 ~\cite{FMor}] Let $f\colon(X,B)\rightarrow Z$ be an lc-trivial
fibration. 
Then $(Z,B_Z )$ has klt (lc)
singularities in a neighborhood of a point $p\in Z$ if and only if $(X,B)$ has
klt (lc) singularities in a neighborhood of $f^{-1}p$.
\end{teo}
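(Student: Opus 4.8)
The plan is to prove the equivalence by comparing discrepancies of divisors upstairs and downstairs, the comparison being dictated by the very definition of the discriminant as a collection of log canonical thresholds. Throughout I use the canonical bundle formula (\ref{cbf}) and, to work on convenient models, the base change property of Proposition \ref{basechange}: after a generically finite base change and a log resolution I may assume that $Z$ is non-singular and that $B_Z+M_Z$ together with the images of all relevant exceptional divisors form a simple normal crossing divisor. I would first separate the two directions. The implication from $(X,B)$ to $(Z,B_Z)$ is adjunction, and it is essentially built into the definition of $B_Z$; the real content is the inverse implication, for which I must control all divisors over $X$ whose centre meets $f^{-1}p$.

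A divisor $E$ over $X$ is either \emph{horizontal} (it dominates $Z$) or \emph{vertical}. If $E$ is horizontal, then its discrepancy is governed by the generic point of its centre, which lies over the generic point of $Z$; since the fibration is lc-trivial (resp.\ klt-trivial), condition (1) of the definition says that $(X,B)$ is lc (resp.\ klt) over the generic point of $Z$, and hence $a(E,X,B)\ge -1$ (resp.\ $>-1$) automatically. Thus the entire question reduces to the vertical divisors, and it is precisely here that the discriminant intervenes.

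The heart of the matter is the codimension-one case. Let $q$ be a prime divisor on $Z$ passing through $p$. Since $q$ is a divisor on $Z$ itself, the convention on discrepancies gives $a(q,Z,B_Z)=-\mathrm{mult}_q B_Z=\gamma_q-1$, while the log canonical threshold defining $\gamma_q$ is computed on a log resolution by $\gamma_q=\min_E \frac{a(E,X,B)+1}{\mathrm{mult}_E f^{\ast}q}$, the minimum running over the divisors $E$ over $X$ lying above $q$. Combining the two identities yields $1+a(q,Z,B_Z)=\min_E \frac{1+a(E,X,B)}{\mathrm{mult}_E f^{\ast}q}$, and, the multiplicities being positive, this gives at once: $(Z,B_Z)$ is lc (resp.\ klt) at the generic point of $q$ if and only if every vertical $E$ above $q$ satisfies $a(E,X,B)\ge -1$ (resp.\ $>-1$), i.e.\ $(X,B)$ is lc (resp.\ klt) over $q$. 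This settles the theorem when $p$ has codimension one.

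For a point $p$ of higher codimension the difficulty is that a vertical divisor over $X$ may map to a subvariety of $Z$ of codimension $>1$, so the codimension-one computation does not apply verbatim. I would reduce to the previous case by cutting with general hyperplanes through $p$: restricting $f$ over a general $H\ni p$ produces an induced lc-trivial fibration $f_H\colon(X_H,B|_{X_H})\to Z_H$, and by Bertini together with inversion of adjunction for a general member (applied both on $X$ and on $Z$), the pair $(X,B)$ is lc (resp.\ klt) near $f^{-1}p$ if and only if $(X_H,B|_{X_H})$ is so near $f_H^{-1}p$, and likewise downstairs; iterating lowers $\dim Z$ until $p$ becomes a codimension-one point. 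The step requiring care is the compatibility $B_{Z_H}=B_Z|_{Z_H}$ of the discriminants under restriction, which holds because the log canonical thresholds defining the two sides coincide for general $H$. The main obstacle I expect is controlling the moduli part $M_Z$: although it never enters the definition of $B_Z$, one must check that the nef class $M_Z$ neither creates nor destroys log canonical centres in the comparison and that it restricts compatibly under the hyperplane sections. The key simplification is that the threshold definition of $B_Z$ only sees the pair $(X,B)$, so that $M_Z$ and the principal correction $\frac{1}{r}(\varphi)$ are absorbed and do not appear in the discrepancy comparison itself.
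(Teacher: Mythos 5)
This statement is quoted by the paper from the literature (Proposition 3.4 of ~\cite{Amb99}, Theorem 4.5 of ~\cite{FMor}); the paper contains no proof of it, so your argument has to be judged on its own terms. Two of your three steps are fine: horizontal divisors are indeed controlled by condition (1) in the definition of an lc-trivial fibration, and for a codimension-one point $q$ of $Z$ the identity
$1+a(q,Z,B_Z)=\gamma_q=\min_E\bigl(1+a(E,X,B)\bigr)/\mathrm{mult}_E f^{\ast}q$
is essentially the definition of the discriminant, so the case $\mathrm{codim}\,p=1$ is correct.

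The gap is the inductive step for points of higher codimension. You cut with a general hyperplane $H$ \emph{through} $p$ and claim that klt/lc-ness near $p$ is equivalent, upstairs and downstairs, to klt/lc-ness of the restricted data near $p$. A member of a linear system forced through a prescribed point is not general at that point, and this equivalence is false exactly there. Concretely, take $Z=\mathbb{A}^2$, $p$ the origin, and $B_Z=\tfrac34\{x=0\}+\tfrac34\{y=0\}$; this really occurs as a discriminant (e.g.\ for the fibre product over $\mathbb{A}^2$ of two relatively minimal elliptic surfaces with a type $III^{\ast}$ fibre over the origin of each factor, a smooth total space with $B=0$, hence klt). Then $(Z,B_Z)$ is klt in a neighborhood of $p$, but for \emph{every} line $H$ through $p$ the restriction $B_Z|_H$ has coefficient $3/2$ at $p$, so $(H,B_Z|_H)$ is not even lc there; with coefficients $1$ the same example kills the lc case. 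Both directions of your induction pass through this false implication ("klt/lc at $p$ $\Rightarrow$ klt/lc of the restriction at $p$"), so neither direction survives. Note also that the compatibility $B_H=B_Z|_H$ you invoke is exactly Lemma \ref{modulirestr}, whose proof needs $H$ to meet the bad locus $\mathcal{S}$ generically and transversally --- a hyperplane through $p$ cannot avoid $p$, which is precisely the point at issue; in the example above the discriminant of the restricted fibration at $p$ is at most $1$, while $B_Z|_H$ has coefficient $3/2$, so the two genuinely differ. Finally, your closing remark that the moduli part is "absorbed and does not appear in the discrepancy comparison" is the opposite of the truth in the higher-codimension case: the known proofs replace your hyperplane cut by birational base change $\sigma\colon Z'\to Z$ (so that a divisor $E$ over $Z$ with small centre becomes a prime divisor on $Z'$, where your codimension-one computation applies to the induced fibration), and then
$a(E,Z,B_Z)=-\mathrm{mult}_E B_{Z'}+\mathrm{mult}_E\bigl(\sigma^{\ast}M_Z-M_{Z'}\bigr),$
so one must control the exceptional term $\sigma^{\ast}M_Z-M_{Z'}$ via the b-divisor formalism and nefness/negativity arguments (Theorem \ref{nefness}); that control is the actual content of the theorem, and it is absent from your proposal.
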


The Formula (\ref{cbf}) is called \textit{canonical bundle formula}.

\section{Reduction theorems}

Throughout this part we will assume that the bases of the lc-trivial fibrations are smooth varieties.
\begin{lem}\label{modulirestr}
Let $f\colon (X,B)\rightarrow Z$ be an lc-trivial fibration.
Then there exists a hyperplane section $H\subseteq Z$
such that $M_Z|_H=M_H$.
\end{lem}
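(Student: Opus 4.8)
The plan is to compare the canonical bundle formula of $f$, restricted to a general hyperplane section $H\subseteq Z$, with the canonical bundle formula of the fibration induced over $H$. Fix $H$ general, set $X_H=f^{\ast}H$, $B_H=B|_{X_H}$ and $f_H=f|_{X_H}\colon X_H\rightarrow H$. Since $Z$ is smooth, by Bertini $H$ is smooth and $X_H$ is normal, and a general fibre of $f_H$ is a general fibre $F$ of $f$; as $(f^{\ast}H)|_F\sim_{\mathbb{Q}}0$, the torsion condition $r(K_{X_H}+B_H)|_F\sim 0$ and the rank condition~(2) --- both read off the generic fibre --- are inherited from $f$, and $(X_H,B_H)$ is klt/lc over the generic point of $H$. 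Thus $f_H$ is again an lc-trivial fibration with the same Cartier index $r$, and I take $\varphi_H=\varphi|_{X_H}$ as the rational function realizing its triviality.

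Next I would restrict the identity $K_X+B+\frac1r(\varphi)=f^{\ast}(K_Z+B_Z+M_Z)$ to $X_H$. Adjunction for the general member $X_H$ of the base-point-free system $|f^{\ast}H|$ gives $(K_X+B+f^{\ast}H)|_{X_H}=K_{X_H}+B_H$, while adjunction on the smooth variety $Z$ gives $(K_Z+H)|_H=K_H$. Rearranging the restricted identity then yields
$$K_{X_H}+B_H+\frac1r(\varphi_H)=f_H^{\ast}\bigl(K_H+B_Z|_H+M_Z|_H\bigr).$$
On the other hand the canonical bundle formula for $f_H$ reads $K_{X_H}+B_H+\frac1r(\varphi_H)=f_H^{\ast}(K_H+B_H+M_H)$. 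Since $f_H$ is a contraction, $f_H^{\ast}$ is injective on $\mathbb{Q}$-divisors, so comparing the two identities reduces the lemma to the single equality of discriminants $B_H=B_Z|_H$; granting this, $M_Z|_H=M_H$ follows at once from the uniqueness of the moduli part attached to the fixed function $\varphi_H$.

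The equality $B_H=B_Z|_H$ is the heart of the argument and the step I expect to be the main obstacle. Writing $B_Z=\sum_p(1-\gamma_p)p$ over the finitely many prime divisors $p\subset Z$ with $\gamma_p\neq 1$, a general $H$ meets each such $p$ and, by Bertini, $p\cap H$ is prime on $H$, so $B_Z|_H=\sum_p(1-\gamma_p)(p\cap H)$; it therefore suffices to show that the components of $B_H$ are exactly the $p\cap H$ and that the log canonical threshold is preserved, $\gamma_{p\cap H}=\gamma_p$. To see this I would fix a log resolution $\mu\colon Y\rightarrow X$ of $(X,B+f^{\ast}p)$ over the generic point of $p$ and write $\gamma_p=\min_i(a_i+1)/b_i$, where $a_i=a(E_i,X,B)$ and $b_i$ is the multiplicity of $f^{\ast}p$ along $E_i$, the minimum running over the exceptional divisors $E_i$ whose centre dominates $p$. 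Because $H$ is general, the generic point of $p\cap H$ lies over a general point of $p$, and by Bertini the restriction $\mu|_{Y_H}\colon Y_H\rightarrow X_H$ is again a log resolution of $(X_H,B_H+f_H^{\ast}(p\cap H))$ in which the relevant divisors $E_i|_{Y_H}$ carry the same discrepancies $a_i$ (adjunction along the general member $Y_H=\mu^{\ast}X_H$) and the same multiplicities $b_i$. Hence $\gamma_{p\cap H}=\min_i(a_i+1)/b_i=\gamma_p$, which gives $B_H=B_Z|_H$ and closes the argument. The delicate points to be checked carefully are the transversality statements guaranteeing that a general $H$ meets all the relevant centres, so that no new component of the discriminant is created over $H$ and none of the thresholds jumps under restriction.
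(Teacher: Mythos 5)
Your proposal follows essentially the same route as the paper's proof: restrict the canonical bundle formula of $f$ to $X_H=f^{-1}(H)$ via adjunction, reduce the lemma to the equality of discriminants $B_Z|_H=B_H$ (i.e.\ invariance of the log canonical thresholds $\gamma_p$ under restriction to a general hyperplane section), and conclude $M_Z|_H=M_H$ by uniqueness of the moduli part. The only difference is one of detail: the paper secures $B_Z|_H=B_H$ by listing explicit Bertini conditions on $H$ (smoothness, generic transversal intersection with each component of the discriminant, avoidance of pairwise intersections) and then asserts the thresholds are preserved, whereas you flesh out that assertion with a log-resolution computation of $\gamma_p=\min_i(a_i+1)/b_i$ — same argument, slightly more expanded at the key step.
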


\begin{proof}
The set $$\mathcal{S}=\left\{
\begin{array}{l}
\scriptstyle 
o\; {\rm point\; of}\; Z\;{\rm of\; codimension\; 1\; such\; that\;
the\; log\; canonical} \\
\scriptstyle {\rm threshold\; of}\; f^{\ast}o\;
{\rm with\; respect\; to}\; (X,B)\; {\rm is\; different\; from}\; 1
\end{array} 
\right\}$$
is a finite set.\\
By the Bertini theorem,
since $Z$ is smooth,
we can find a hyperplane section $H\subseteq Z$ such that
\begin{enumerate}
\item $H$ is smooth;
\item $H$ intersects $o$ transversally and generically for every $o\in\mathcal{S}$;
\item $H$ does not contain any intersection $o\cap o'$
where $o'\in\mathcal{S}\backslash\{o\}$.
\end{enumerate}
The restriction $f_H\colon X\cap f^{-1}(H)\rightarrow H$
is again an lc-trivial fibration.
Set $$X_H=f^{-1}(H); \;\;B_{X_H}=B|_{X_H};\;\; o_H=o\cap H.$$
The canonical bundle formula for $f_H$ is
$$K_{X_H}+B_{X_H}+\frac{1}{r}(\psi)=f_H^{\ast}(K_H+B_H+M_H).$$
Then the log canonical threshold of $f_H^{\ast}o_H$ 
with respect to $(X_H,B_H)$ is equal to
the log canonical threshold of $f^{\ast}o$ 
with respect to $(X,B)$.
We have then $B_Z|_H=B_H$.

If we write the canonical bundle formula for $f$, we have
$$K_X+B+\frac{1}{r}(\varphi)=f^{\ast}(K_Z+B_Z+M_Z).$$
If we sum $f^{\ast}H$ on both sides of the equality, restrict to $f^{-1}H=X_H$ and
apply the adjunction formula, we obtain
$$K_{X_H}+B_{X_H}+\frac{1}{r}(\varphi|_{X_H})=f_H^{\ast}(K_H+B_Z|_H+M_Z|_H).$$
Since we have $B_Z|_H=B_H$, we must also have $M_Z|_H=M_H$.
\end{proof}
Lemma \ref{modulirestr} is the main tool in order to prove by induction Theorem \ref{semicurvesemitutto}.

\begin{prop}\label{codim}
Conjecture \textbf{EbS(1)} implies that for all
lc-trivial fibration $f\colon X\rightarrow Z$ we have 
$${\rm codim}({\rm Bs}|mM_Z|)\geq 2$$
where $m$ is as in Conjecture \textbf{EbS(1)}.
\end{prop}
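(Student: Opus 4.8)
The plan is to argue by induction on $k=\dim Z$, using Lemma \ref{modulirestr} to cut the base down by one dimension at each step. As a preliminary one needs $mM_Z$ to be an honest integral divisor, so that the linear system $|mM_Z|$ is defined at all; this is the statement that $m$ bounds the denominators of the moduli part, which for a base curve is contained in \textbf{EbS(1)} and which propagates to higher-dimensional bases by restricting to a general complete intersection curve. Granting this, it suffices to prove that no prime divisor $D\subseteq Z$ is contained in $\mathrm{Bs}|mM_Z|$, since then the base locus carries no divisorial component and has codimension at least $2$.

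For the base case $k=1$ the variety $Z$ is a smooth curve, so the birational model furnished by \textbf{EbS(1)} may be taken isomorphic to $Z$, whence $M_{Z'}=M_Z$; thus $mM_Z$ is base point free and $\mathrm{Bs}|mM_Z|=\emptyset$. For the inductive step assume $k\geq 2$ and that the statement holds for all lc-trivial fibrations over smooth bases of dimension $<k$. Suppose, for contradiction, that some prime divisor $D\subseteq Z$ lies in $\mathrm{Bs}|mM_Z|$; note $\dim D=k-1\geq 1$. I would take $H$ to be a general member of a sufficiently ample very ample system $|nA|$, chosen so that two properties hold at once. First, running the Bertini argument in the proof of Lemma \ref{modulirestr} for this system gives that $f_H\colon X_H\to H$ is again an lc-trivial fibration with $M_Z|_H=M_H$; crucially its general fibre and the Cartier index $r$ are unchanged, so the same integer $m=m(d,r)$ applies to $f_H$. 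Second, taking $n\gg 0$ and applying Serre vanishing to the coherent sheaf $\mathcal{O}_Z(K_Z-mM_Z)$, together with Serre duality and $\dim Z-1\geq 1$, yields $H^1(Z,\mathcal{O}_Z(mM_Z-H))=0$, so that the restriction map $H^0(Z,\mathcal{O}_Z(mM_Z))\to H^0(H,\mathcal{O}_H(mM_H))$ is surjective.

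With such an $H$ the contradiction is immediate. Since $D\subseteq\mathrm{Bs}|mM_Z|$, every section of $mM_Z$ vanishes along $D$, hence its restriction vanishes along the nonempty codimension-one subvariety $D\cap H\subseteq H$; by surjectivity every section of $mM_H$ then vanishes along $D\cap H$, so $D\cap H\subseteq\mathrm{Bs}|mM_H|$. As $D\cap H$ has codimension one in $H$, this contradicts the inductive hypothesis applied to $f_H$, which asserts $\mathrm{codim}(\mathrm{Bs}|mM_H|)\geq 2$ (and for $k=2$ it contradicts $\mathrm{Bs}|mM_H|=\emptyset$, coming directly from \textbf{EbS(1)}). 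The main obstacle is the second requirement on $H$: one must produce a single hyperplane section that is at the same time generic enough for the moduli part to restrict as in Lemma \ref{modulirestr} and positive enough for the vanishing $H^1(Z,\mathcal{O}_Z(mM_Z-H))=0$ to hold, so that base points of $|mM_Z|$ lying on $H$ are detected by base points of $|mM_H|$; securing this compatibility, rather than the formal induction, is where the work lies, the integrality of $mM_Z$ being the other point that must be pinned down before the argument can begin.
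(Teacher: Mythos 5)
Your proof is correct and follows essentially the same route as the paper's: induction on $\dim Z$, a hyperplane section $H$ as in Lemma \ref{modulirestr} chosen positive enough that a vanishing theorem makes the restriction $H^0(Z,mM_Z)\to H^0(H,mM_H)$ surjective, whence any divisorial base component would restrict to one for $|mM_H|$, contradicting the inductive hypothesis. The only differences are cosmetic (Serre vanishing plus duality in place of Kodaira vanishing, a contradiction with a prime divisor $D$ in place of the fixed-part decomposition $|mM_Z|=|M|+{\rm Fix}$), and your preliminary point about the integrality of $mM_Z$ is a legitimate care that the paper defers to Proposition \ref{indcart} by the same restriction-to-hyperplanes argument.
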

\begin{proof}
We prove the statement by induction on $k=\dim Z$.
The base of induction is $\dim Z=1$ and this case follows from \textbf{EbS(1)}.
Suppose then that the statement is true for lc-trivial fibration whose base hase dimension $k-1$
and let $f\colon X\rightarrow Z$ be an lc-trivial fibration with $\dim Z=k>1$.
Then we have $$|mM_Z|=|M|+{\rm Fix}$$ where ${\rm Fix}$ is the fixed part of the linear system and
${\rm codim}({\rm Bs}|M|)\geq 2$.
Let $H$ be a hyperplane section as in Lemma \ref{modulirestr},
such that $H-mM_Z$ is ample.
By the Kodaira vanishing theorem $$h^0(Z,mM_Z-H)=h^1(Z,mM_Z-H)=0$$
and the restriction induces an isomorphism
$$H^0(Z,mM_Z)\cong H^0(H,mM_Z|_H)\cong H^0(H,mM_H).$$
Then if we write 
$$
\begin{array}{rcl}
|mM_Z|_{|H}&=&|M|_{|H}+{\rm Fix}|_H\\
|mM_H|&=&|L|+{\rm fix}
\end{array}
$$
where ${\rm fix}$ is the fixed component
of the linear system $|mM_H|$, we have ${\rm fix} \supseteq {\rm Fix}|_H$.
And since by inductive hypothesis ${\rm fix}=0$ also ${\rm Fix}|_H=0$ and then ${\rm Fix}=0$.
\end{proof}
\begin{cor}\label{duesez}
Conjecture \textbf{EbS(1)} implies that for any
lc-trivial fibration $f\colon X\rightarrow Z$ we have $h^0(Z,mM_Z)\geq2$,
unless $M_Z$ is torsion, where $m$ is as in \textbf{EbS(1)}.
\end{cor}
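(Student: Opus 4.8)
The plan is to read the corollary off directly from the base-locus bound established in Proposition \ref{codim}. Under \textbf{EbS(1)} that proposition gives ${\rm codim}({\rm Bs}|mM_Z|)\geq 2$. The first thing I would extract is that the linear system $|mM_Z|$ must be non-empty: an empty system has base locus equal to all of $Z$, of codimension $0$, which the bound excludes. Hence $h^0(Z,mM_Z)\geq 1$ and $mM_Z$ is effective. The second consequence is that $|mM_Z|$ has no fixed divisorial component, since any such component would contribute a codimension-one piece to the base locus, again contradicting the bound.

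Granting these two facts, I would argue by contraposition. Suppose $M_Z$ is not torsion, so that $mM_Z\not\sim 0$. If we had $h^0(Z,mM_Z)=1$, then $|mM_Z|$ would consist of a single effective member $D$, which would therefore be entirely fixed; since $mM_Z\not\sim 0$ this $D$ is non-zero, and so it would constitute a codimension-one fixed component, contradicting ${\rm codim}({\rm Bs}|mM_Z|)\geq 2$. Combining this with $h^0(Z,mM_Z)\geq 1$ forces $h^0(Z,mM_Z)\geq 2$, which is exactly the assertion. In this reading the corollary is essentially immediate once Proposition \ref{codim} is in hand, and the dichotomy "$M_Z$ torsion versus $h^0\geq 2$" is simply the distinction between $D=0$ and $D\neq 0$.

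The argument is short, and the points requiring care are conventional rather than substantive. One must work with the convention under which the base-locus bound is meaningful only for a non-empty system, so that the step $h^0\geq 1$ is legitimate, and one must use that for a one-dimensional space of sections the unique non-zero member is forced to lie in every member and is therefore fixed. A further bookkeeping point, should one wish to pass to the birational model $Z'$ on which $M_{Z'}$ is nef, is that the global sections of $mM_Z$ and $mM_{Z'}$ may be identified using the compatible pullback of the moduli part under birational morphisms from Theorem \ref{nefness}(ii), so that $h^0$ can be computed on whichever model is convenient. I do not anticipate any genuine obstacle beyond this, as the entire content has been absorbed into Proposition \ref{codim}.
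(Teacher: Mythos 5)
Your proof is correct and takes essentially the same approach as the paper: the paper's entire proof is the one sentence ``By Proposition \ref{codim} there must be at least two sections, unless $M_Z$ is torsion,'' and your argument is exactly the careful unpacking of that sentence (non-emptiness of $|mM_Z|$ from the convention that the empty system has base locus all of $Z$, absence of fixed divisorial components, and then the dichotomy that when $h^0=1$ the unique member is fixed, hence zero, hence $M_Z$ is torsion).
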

\begin{proof}
By Proposition \ref{codim} there must be at least two sections,
unless $M_Z$ is torsion.
\end{proof}

\begin{proof} [Proof of Theorem \ref{semicurvesemitutto}]
\underline{We treat first the torsion case},
we prove by induction on the dimension of the base of the lc-trivial fibration
that there exists an integer $m=m(d,r)$ such that $mM_Z\cong\mathcal{O}_Z$.
If the dimension of the base equals one then it follows from Conjecture \textbf{EbS(1)}.
Assume then that $f\colon X\rightarrow Z$ is an lc-trivial fibration with $\dim Z=k>1$
and $M_Z$ is torsion, that is there exists an integer $a$ such that $aM_Z\cong \mathcal{O}_Z$.

Let $H$ be a hyperplane section such that $M_Z|_H=M_H$, as in Lemma \ref{modulirestr},
and such that $H-mM_Z$ is an ample divisor.
Since $M_Z|_H=M_H$, also $M_H$ is torsion because
$$\mathcal{O}_H\cong\mathcal{O}_Z|_H\cong aM_Z|_H\cong aM_H.$$
By the Kodaira vanishing theorem, since $k>1$ and $H-mM_Z$ is an ample divisor, we have
$$H^0(Z,mM_Z)\cong H^0(H,mM_H).$$
By the inductive hypothesis $mM_H$ is base-point-free, then $h^0(H,mM_H)=1$.
Thus also $h^0(Z,mM_Z)=1$ and $mM_Z\cong \mathcal{O}_Z$.\\
\underline{Then we assume that $M_Z$ is not torsion} and
we prove the statement by induction on the dimension of the base of the lc-trivial fibration.
The one-dimensional case is exactly Conjecture \textbf{EbS(1)}.
Suppose then that the statement is true for all the lc-trivial fibrations whose base has dimension
$k-1$ and let $f\colon X\rightarrow Z$ be an lc-trivial fibration
with $\dim Z=k$.
Let $Z' \rightarrow Z$ be the birational model given by 
Theorem \ref{nefness}\textbf{(ii)}.
We prove that $mM_{Z'}$ is base-point-free.
Let $\nu\colon \hat{Z} \rightarrow Z'$ be a resolution of the linear system
$|mM_{Z'}|$. Then $\nu^{\ast}|mM_{Z'}|=|{\rm Mob}|+{\rm Fix}$
where $|{\rm Mob}|$ is a base-point-free linear system and ${\rm Fix}$ is the fixed part.
We have $$\nu^{\ast}|mM_{Z'}|=|\nu^{\ast}(mM_{Z'})|=|mM_{\hat{Z}}|.$$
Since by Proposition \ref{codim} we have ${\rm codim}({\rm Bs}|mM_{\hat{Z}}|)\geq 2$,
it follows that ${\rm Fix}= 0$ and $|mM_{Z'}|$ is base-point-free.
\end{proof}

\begin{oss}\label{effeff}
{\rm
By considering as in Proposition \ref{codim}
the long exact sequence associed to
$$0\rightarrow \mathcal{O}_Z(mM_Z-H)\rightarrow \mathcal{O}_Z(mM_Z)\rightarrow \mathcal{O}_H(mM_Z|_H)\rightarrow0$$
for a hyperplane section $H$ as in Lemma \ref{modulirestr},
it is possible to also prove an inductive result on \textbf{effective non-vanishing}.
That is, the existence of an integer $m=m(d,r)$
such that $H^0(Z,mM_Z)\neq 0$ for all lc-trivial fibrations
$f\colon (X,B)\rightarrow Z$ with $\dim Z=1$
implies the same result for lc-fibrations with $\dim Z=k\geq 1$ 
(and with same dimension of the fibres and Cartier index).
}
\end{oss}

\section{Variation of Hodge structures and covering tricks}
\subsection{Variation of Hodge structures}
Let $\mathcal{S}$ be $\mathbb{C}^{\ast}$ viewed as an $\mathbb{R}$-algebra.
\begin{defi}[2.1.4 ~\cite{DeligneII}]
A real Hodge structure is a real vector space $V$ of finite dimension
together with an action of $\mathcal{S}$.\\
The representation of $\mathcal{S}$ on $V$ induces a bigraduation on $V$,
such that $\overline{V^{pq}}=V^{qp}$.
We say that $V$ has weight $n$ if $V^{pq}= 0$ whenever $p+q\neq n$.
\end{defi}

\begin{defi}[2.1.10 ~\cite{DeligneII}]
A Hodge structure $H$ of weight $n$ is
\begin{itemize}
\item a $\mathbb{Z}$-module of finite type $H_{\mathbb{Z}}$;
\item a real Hodge structure of weight $n$ on $H_{\mathbb{R}}=H_{\mathbb{Z}}\otimes_{\mathbb{Z}}\mathbb{R}$.
\end{itemize}
\end{defi}

\begin{defi}
Let $S$ be a topological space. 
A local system on $S$ is a sheaf $\mathbb{V}$ of $\mathbb{Q}$-vector spaces on $S$.
\end{defi}
Let now $S$ be a complex manifold.
\begin{defi}
Let $\mathcal{V}\rightarrow S$ be a vector bundle.
A connection is a morphism $$\nabla\colon\mathcal{V}\rightarrow\Omega^1_S\otimes\mathcal{V}$$
that satisfies the Leibniz rule.\\
The curvature of a connection is $\nabla\circ\nabla\colon\mathcal{V}\rightarrow\Omega^2_S\otimes\mathcal{V}.$\\
A connection is said to be integrable if $\nabla\circ\nabla=0$.
\end{defi}
By ~\cite[Proposition 2.16]{DeligneEqdiff} the data of a local system $\mathbb{V}$ is equivalent to the data of a vector bundle
$\mathcal{V}\rightarrow S$ 
together with an integrable connection $\nabla$
and the correspondance is given by associating to $\mathbb{V}$
the vector bundle $$\mathcal{V}=\mathbb{V}\otimes \mathcal{O}.$$

\begin{defi} 
A flat subsystem of a local system $\mathbb{V}$
is a sub-local system $\mathbb{W}$ of $\mathbb{V}$
or equivalently a subbundle $\mathcal{W}$ of $\mathcal{V}$
on which the curvature of the connection is zero.
\end{defi}

\begin{defi}[(3.1) ~\cite{SteenZuck}]
A variation of Hodge structure of weight $m$ on $S$ is:
\begin{itemize}
\item a local system $\mathbb{V}$ on $S$;
\item a flat bilinear form $$Q\colon \mathcal{V}\times\mathcal{V}\rightarrow\mathbb{C}$$
which is rational with respect to $\mathbb{V}$, where $\mathcal{V}=\mathbb{V}\otimes\mathcal{O}_S$;
\item a \textit{Hodge filtration} $\{\mathcal{F}^p\}$, that is a decreasing filtration of $\mathcal{V}$
by holomorphic subbundles such that for all $p$ we have $\nabla(\mathcal{F}^p)\subseteq\Omega_S^1\otimes\mathcal{F}^{p-1}.$
\end{itemize}
\end{defi}

\begin{defi}[(3.4) ~\cite{SteenZuck}]
A variation of mixed Hodge structure on $S$ is:
\begin{itemize}
\item a local system $\mathbb{V}$ on $S$;
\item a \textit{Hodge filtration} $\{\mathcal{F}^p\}$ that is a decreasing filtration 
 of $\mathcal{V}$ by holomorphic subbundles
such that for all $p$ we have $\nabla(\mathcal{F}^p)\subseteq\Omega_S^1\otimes\mathcal{F}^{p-1}$;
\item a \textit{Weight filtration} $\{\mathcal{W}_k\}$ that is an increasing filtration of $\mathcal{V}$
by local subsystems, or equivalently, the subsheaf $\mathcal{W}_k$ is defined over $\mathbb{Q}$ for every $k$;
\end{itemize}
Moreover we require that the filtration induced by $\{\mathcal{F}^p\}$
on $\mathcal{W}_k/\mathcal{W}_{k-1}$ determines a variation of Hodge structure of weight $k$.
\end{defi}

From now on we will be interested in variations of Hodge structures and of mixed Hodge structures defined on
a Zariski open subset $Z_0$ of a projective variety $Z$. We assume moreover that $\Sigma_Z=Z\backslash Z_0$
is a simple normal crossing divisor.

The following is a fundamental result about the behaviour of a variation of Hodge structures on $Z_0$
near $\Sigma_Z$.
For the definition of \textit{monodromy} and \textit{unipotent monodromy} 
of variations of Hodge structures and residue of a connection
see ~\cite[Definition 10.16, section 11.1.1]{PetersSteen}
\begin{prop}[Proposition 5.2(d), ~\cite{DeligneEqdiff}]\label{canext}
Let $\mathcal{V}$ be a variation of Hodge structures on $Z_0$
that has unipotent monodromies around $\Sigma_Z$. Let $z$
be a local variable with centre in $\Sigma_Z$. Then
\begin{description}
\item[a] There exists a unique extension $\tilde{\mathcal{V}}$ of $\mathcal{V}$ on $Z$
such that  
\begin{description}
\item[i] every horizontal section of $\mathcal{V}$ as a section of $\tilde{\mathcal{V}}$ on $Z_0$
 grows at most as $$O(\log \parallel z\parallel^k)$$ near $\Sigma_Z$;
\item[ii] let $\mathcal{V}^{\ast}$ be the dual of $\mathcal{V}$.
Every horizontal section of $\mathcal{V}^{\ast}$
 grows at most as $$O(\log \parallel z\parallel^k)$$ near $\Sigma_Z$.
\end{description}
\item[b] Conditions (i) and (ii) are equivalent respectively to conditions (iii) and (iv).
\begin{description}
\item[iii] The  matrix of the connection on $\mathcal{V}$
on a local frame for $\tilde{\mathcal{V}}$
has logarithmic poles near $\Sigma_Z$.
\item[iv] Each residue of the connection along each irreducible component of $\Sigma_Z$
is nilpotent.
\end{description}
\item[c] Let $\mathcal{V}_1$ and $\mathcal{V}_2$ be variations of Hodge structures on $Z_0$
that has unipotent monodromies around $\Sigma_Z$.
Every morphism $f\colon\mathcal{V}_1\rightarrow\mathcal{V}_2$
extends to a morphism $\tilde{\mathcal{V}}_1\rightarrow\tilde{\mathcal{V}}_2$.
Moreover the functor $\mathcal{V}\mapsto\tilde{\mathcal{V}}$ is exact and
commutes with $\otimes$, $\wedge$, ${\rm Hom}$.
\end{description}
The extension $\tilde{\mathcal{V}}$ is called \textbf{canonical extension}.
\end{prop}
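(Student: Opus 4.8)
The plan is to reduce everything to an explicit local model near $\Sigma_Z$ and to produce the extension by a single-valued rescaling of a flat multivalued frame; note that the construction uses only the underlying local system and the unipotency of its monodromy, the Hodge structure playing no role in the extension itself. Working locally I would choose coordinates so that $\Sigma_Z=\{z_1\cdots z_s=0\}$ in a polydisc and first treat the one-variable case $z=z_1$ on the punctured disc $\Delta^{\ast}$; the normal crossing case follows by attaching one monodromy operator to each branch and combining them, since every object in sight is local and multiplicative in the branches. On $\Delta^{\ast}$ let $T$ be the monodromy of $\mathbb{V}$; by hypothesis $T$ is unipotent, so $N:=\log T$ is a well-defined nilpotent endomorphism (the series terminates). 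Choosing a multivalued flat frame $e$ of $\mathcal{V}=\mathbb{V}\otimes\mathcal{O}$, with continuation around $0$ sending $e\mapsto Te$, I would set
$$\tilde e:=z^{-N/2\pi i}\,e=\exp\Big(-\tfrac{\log z}{2\pi i}\,N\Big)\,e,$$
and define $\tilde{\mathcal{V}}$ to be the locally free $\mathcal{O}_Z$-module generated by $\tilde e$. The point is that $\tilde e$ is single-valued: continuation multiplies $z^{-N/2\pi i}$ by $\exp(-N)=T^{-1}$ and $e$ by $T$, and these cancel since both factors are functions of $N$ and commute. Hence $\tilde{\mathcal{V}}$ is a genuine holomorphic extension across $0$.

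I would then compute the connection in this frame. Since $\nabla e=0$,
$$\nabla\tilde e=\Big(-\tfrac{N}{2\pi i}\Big)\tfrac{dz}{z}\,\tilde e,$$
so in the frame $\tilde e$ the connection matrix is $-\tfrac{N}{2\pi i}\tfrac{dz}{z}$: it has a logarithmic pole along $\Sigma_Z$, which is condition (iii), and its residue $-\tfrac{N}{2\pi i}$ is nilpotent, which is condition (iv). Reading this computation forwards gives the implication for $\tilde{\mathcal{V}}$, and reading it backwards shows that any extension with logarithmic pole and nilpotent residue is generated by a frame of the above shape; this yields the equivalence of (iii) and (iv) asserted in part \textbf{b}.

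For the growth conditions and uniqueness I would argue as follows. Because $N$ is nilpotent, $z^{-N/2\pi i}=\exp(-\tfrac{\log z}{2\pi i}N)$ is a polynomial in $\log z$ of degree below the nilpotency index; hence each horizontal section, read in the frame $\tilde e$, has entries that are $O(\log\|z\|^{k})$, which is condition (i), and the identical computation for the dual connection, whose residue $(\tfrac{N}{2\pi i})^{t}$ is again nilpotent, gives condition (ii). Conversely, if $\tilde{\mathcal{V}}'$ is another extension satisfying (i) and (ii), the identity of $\mathcal{V}$ on $Z_0$ is a horizontal isomorphism $\tilde{\mathcal{V}}\to\tilde{\mathcal{V}}'$ whose matrix, by applying (i)--(ii) to both sheaves, is holomorphic on $\Delta^{\ast}$ and bounded by a power of $\log\|z\|$; by the Riemann removable singularity theorem it extends holomorphically and invertibly across $\Sigma_Z$, forcing $\tilde{\mathcal{V}}=\tilde{\mathcal{V}}'$. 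This proves the uniqueness and existence asserted in \textbf{a}, and together with the previous paragraph completes \textbf{b}.

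Finally, for the functoriality in \textbf{c} I would invoke uniqueness. A morphism $\phi\colon\mathcal{V}_1\to\mathcal{V}_2$ on $Z_0$ is horizontal, hence intertwines the monodromies; in the frames $\tilde e^{(1)},\tilde e^{(2)}$ its matrix is $z^{-N_2/2\pi i}\,\Phi\,z^{N_1/2\pi i}$, with $\Phi$ the constant matrix of $\phi$ in the flat frames, and since $N_1,N_2$ are nilpotent this is polynomial in $\log z$, hence bounded and holomorphic across $\Sigma_Z$; thus $\phi$ extends to $\tilde{\mathcal{V}}_1\to\tilde{\mathcal{V}}_2$. Exactness follows because the extension functor sends flat frames to single-valued frames with the same transition behaviour, and compatibility with $\otimes$, $\wedge$, $\mathrm{Hom}$ follows from the explicit twist, since the nilpotent operator attached to a tensor product (resp. wedge, Hom) is the one induced by $N_1,N_2$, making the factor $z^{-N/2\pi i}$ functorial under these operations; equivalently, by uniqueness the canonical extension of $\mathcal{V}_1\otimes\mathcal{V}_2$ agrees with $\tilde{\mathcal{V}}_1\otimes\tilde{\mathcal{V}}_2$. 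The main obstacle is the uniqueness and growth step: the removable-singularity argument must be carried out uniformly along all branches of the normal crossing divisor at once, which requires controlling the polynomial-in-$\log$ bounds in each of the variables $z_1,\dots,z_s$ simultaneously.
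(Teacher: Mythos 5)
This proposition is not proved in the paper at all: it is quoted, as a known tool, from Proposition 5.2(d) of \cite{DeligneEqdiff}, so there is no in-paper argument to measure your proposal against. What you have written is in substance Deligne's own construction of the canonical extension, and it is correct in outline: unipotency of the monodromy $T$ gives a genuine nilpotent logarithm $N=\log T$; the twist $\tilde e=z^{-N/2\pi i}e$ of a multivalued flat frame is single-valued and generates the extension; the connection matrix in this frame is $-\frac{N}{2\pi i}\frac{dz}{z}$, which gives (iii) and (iv); the entries of $z^{\pm N/2\pi i}$ are polynomials in $\log z$, which gives the growth conditions (i) and (ii); uniqueness follows by applying the growth bounds to the transition matrix between two candidate extensions and removing the singularity; and functoriality, exactness and compatibility with $\otimes$, $\wedge$, $\mathrm{Hom}$ follow because the monodromy logarithm of an induced local system is the induced operator (or, alternatively, from uniqueness). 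Your remark that the Hodge structure plays no role is also accurate, and the reduction of the normal crossing case to commuting operators $N_1,\dots,N_s$, one per branch, is the right mechanism; you correctly flag the multi-variable removability step as the point needing uniform control.

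Two local slips, neither fatal. First, in the functoriality step, ``polynomial in $\log z$, hence bounded'' is false: log-polynomials are unbounded near $\Sigma_Z$. What saves the argument is that they are $o(|z|^{-1})$, so the Riemann removable singularity theorem still applies; better yet, flatness of $\phi$ gives the intertwining relation $\Phi N_1=N_2\Phi$, so the matrix $z^{-N_2/2\pi i}\,\Phi\,z^{N_1/2\pi i}$ is in fact the constant $\Phi$ and there is nothing to extend. Second, your ``reading it backwards'' claim --- that any extension whose connection has logarithmic poles and nilpotent residues is generated by a frame of the stated shape --- is not a formal consequence of your forward computation; it is the classical statement on regular singular connections (a holomorphic gauge transformation reduces such a connection to the constant form $R_0\,dz/z$, possible because a nilpotent residue has no pair of eigenvalues differing by a nonzero integer), and it should be cited or proved, since it is the one genuinely nontrivial input behind the equivalence asserted in (b).
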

\begin{oss}\label{lamatrdellaconn}
{\rm
In the situation of Proposition \ref{canext}
the matrix of the connection on $\mathcal{V}$
has the following form
$$\Gamma=\sum U_i \frac{d z_i}{z_i}$$
where $U_i$ is the matrix that represents the nilpotent part of the monodromy around the
component $\Sigma_i$ of $\Sigma_Z$.
}
\end{oss}
Let $V$, $Z$ be nonsingular projective varieties and $h\colon V\rightarrow Z$
a surjective morphism with connected fibres.
Let $Z_0\subseteq Z$ be the largest Zariski open set where $h$ is smooth
and $V_0=h^{-1}Z_0$.
Assume that $\Sigma_Z=Z\backslash Z_0$ and $\Sigma_V=V\backslash V_0$ are simple normal crossing divisors
on $Z$ and $V$. Set $d=\dim V-\dim Z$.\\
Consider $\mathcal{H}_{\mathbb{C}}=(R^d h_{\ast}\mathbb{C}_{V_0})_{prim}$
and $\mathcal{H}_0=\mathcal{H}_{\mathbb{C}}\otimes\mathcal{O}_{Z_0}$
and set $\mathcal{F}=h_{\ast}\omega_{V/Z}$ and $\mathcal{F}_0=\mathcal{F}\otimes\mathcal{O}_{Z_0}$.
We have that $\mathcal{H}_{\mathbb{C}}$ is a local system over $Z_0$.
Moreover $\mathcal{H}_0$ has a descending filtration $\{\mathcal{F}^p\}_{0\leq p\leq d}$,
the \textit{Hodge filtration} and $\mathcal{F}_0=\mathcal{F}^d$.\\
There is a canonical way to extend $\mathcal{H}_0$ and $\mathcal{F}_0$ on $Z$:
\begin{teo}[Proposition 5.4 ~\cite{DeligneEqdiff}, Theorem 2.6 ~\cite{Kol86}]
\begin{enumerate}
\item $R^d h_{\ast}\mathbb{C}_{V_0}\otimes\mathcal{O}_{Z_0}$
has a canonical extension to a locally free sheaf on $Z$.
\item $h_{\ast}\omega_{V/Z}$ coincides with the canonical extension
of the bottom piece of the Hodge filtration.
\end{enumerate}
\end{teo}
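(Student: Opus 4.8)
The plan is to prove the two assertions in turn, with (2) building on the extension constructed in (1). For (1), I would realise $\mathcal{H}_{\mathbb{C}}=(R^d h_{\ast}\mathbb{C}_{V_0})_{prim}$ as the local system underlying the polarised variation of Hodge structure of weight $d$ carried by the smooth family $h\colon V_0\rightarrow Z_0$, so that $\mathcal{H}_0$ becomes a flat holomorphic bundle for the Gauss--Manin connection $\nabla$. Since $\Sigma_Z$ is simple normal crossing, the desired locally free extension on $Z$ is Deligne's canonical extension: by the monodromy theorem the local monodromies are quasi-unipotent, and after normalising the eigenvalues of the residues of $\nabla$ to have real part in $[0,1)$ one obtains a unique extension $\tilde{\mathcal{H}}$ with logarithmic poles along $\Sigma_Z$. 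In the unipotent case this is exactly the extension of Proposition \ref{canext}, and by Remark \ref{lamatrdellaconn} it is generated locally by a frame in which $\nabla$ has the form $\sum U_i\,dz_i/z_i$ with $U_i$ nilpotent; this exhibits $\tilde{\mathcal{H}}$ as a free $\mathcal{O}_Z$-module. The general quasi-unipotent case I would reduce to the unipotent one by a Kawamata covering $Z'\rightarrow Z$ ramified along $\Sigma_Z$, applying Proposition \ref{canext} upstairs and descending.

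For (2), the content is a comparison of two extensions of the bottom Hodge piece $\mathcal{F}^d_0=\mathcal{F}_0=h_{\ast}\omega_{V_0/Z_0}$ across $\Sigma_Z$: the push-forward $h_{\ast}\omega_{V/Z}$ of the relative dualising sheaf of the whole $V$, and the canonical extension $\widetilde{\mathcal{F}^d}$ of $\mathcal{F}^d$ inside $\tilde{\mathcal{H}}$. First I would use Schmid's nilpotent orbit theorem to show that each Hodge subbundle $\mathcal{F}^p_0$ extends to a holomorphic subbundle $\widetilde{\mathcal{F}^p}\subseteq\tilde{\mathcal{H}}$ which is the canonical extension of $\mathcal{F}^p$; in particular $\widetilde{\mathcal{F}^d}$ is a locally free extension of the bottom piece. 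It then remains to identify it with $h_{\ast}\omega_{V/Z}$. Both sheaves restrict to $h_{\ast}\omega_{V_0/Z_0}$ over $Z_0$, so they agree away from $\Sigma_Z$, and I would check the equality of extensions point by point on $\Sigma_Z$ by measuring growth: a local generator of $h_{\ast}\omega_{V/Z}$ is a family of holomorphic relative $d$-forms whose Hodge norm degenerates at most logarithmically, which by condition (i) of Proposition \ref{canext} is precisely the defining growth condition for a section of the canonical extension $\widetilde{\mathcal{F}^d}$. Verifying both inclusions, together with local freeness, forces $h_{\ast}\omega_{V/Z}=\widetilde{\mathcal{F}^d}$.

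The main obstacle is exactly this boundary analysis: proving that the growth condition cutting out the canonical extension of $\mathcal{F}^d$ is met, and only met, by sections coming from $h_{\ast}\omega_{V/Z}$. This is the heart of Kollár's theorem and depends on two delicate inputs---Schmid's precise asymptotics for the Hodge metric near $\Sigma_Z$, used to control the degeneration of Hodge sections, and the semistable reduction and base change needed to pass from quasi-unipotent to unipotent monodromy and then descend while keeping both $\omega_{V/Z}$ and the canonical extension compatible with the covering. Ensuring that the relative dualising sheaf behaves well under this reduction, so that the final descent identifies the two sheaves on $Z$ itself rather than merely on a cover, is where most of the technical effort concentrates.
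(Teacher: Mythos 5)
The first thing to say is that the paper itself contains no proof of this statement: it is imported as a black box, with (1) attributed to Deligne \cite{DeligneEqdiff} (Prop.\ 5.4) and (2) to Koll\'ar \cite{Kol86} (Thm.\ 2.6), so there is no internal argument to compare yours against. Measured against the cited literature, your outline follows the same standard route: for (1), Deligne's canonical extension, obtained by normalizing the residue eigenvalues of the Gauss--Manin connection, and for (2), extension of the Hodge filtration across $\Sigma_Z$ via Schmid's nilpotent orbit theorem followed by an identification of $h_{\ast}\omega_{V/Z}$ with the canonical extension of the bottom piece by growth estimates on the Hodge norm. That is indeed the Fujita--Kawamata--Koll\'ar strategy (\cite{Takao}, \cite{Kaw81}, \cite{Kol86}).

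As a proof, however, your text stops exactly where the theorem lives. The whole content of (2) is the two-sided claim that a local section of $h_{\ast}\omega_{V/Z}$ has at most logarithmic Hodge-norm growth near $\Sigma_Z$, and conversely that every section of the canonical extension of $\mathcal{F}^d$ is a holomorphic relative $d$-form on all of $V$; you name this as ``the main obstacle'' rather than proving it, and carrying it out requires semistable reduction, the explicit local frames provided by the nilpotent orbit theorem, and base-change compatibility of $\omega_{V/Z}$ --- none of which is supplied. Two further points. In (1) the closing ``Kawamata covering'' reduction is both unnecessary and risky: Deligne's construction already handles arbitrary (in particular quasi-unipotent) monodromy by the residue normalization you quote, whereas descent of canonical extensions through a ramified cover is genuinely delicate, since the pull-back of a canonical extension is again canonical only when the monodromy is unipotent (compare the hypotheses of Proposition \ref{canext}), and one must check that the invariant push-forward is locally free with the correctly normalized residues. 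Finally, in the non-unipotent case statement (2) is sensitive to which normalization ($[0,1)$ versus $(-1,0]$) defines ``the'' canonical extension; with the wrong convention the identification with $h_{\ast}\omega_{V/Z}$ is false, so this choice must be pinned down, not left implicit.
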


Let $h_0\colon V_0\rightarrow Z_0$ be as before.
Let $D\subseteq V$ be a simple normal crossing divisor such that
the restriction $h_0|_{D}$ is flat.
Assume that $D+\Sigma_V$ is simple normal crossing.
Let us denote the restriction as $$h'_0\colon V_0\backslash D\rightarrow Z_0.$$
Thus $R^d (h'_0)_{\ast}\mathbb{C}_{V_0\backslash D}$ is a local system on $Z_0$
by ~\cite[section 5.2]{SteenZuck}.
Let $\{\mathcal{F}^p\}$ be the Hodge filtration
and let $$\mathcal{W}_k
=(h_0)_{\ast}\Omega^k_{V_0/Z_0}(\log D)\otimes\Omega_{V_0/Z_0}^{\bullet-k}
=\{(h_0)_{\ast}\Omega^k_{V_0/Z_0}(\log D)\otimes\Omega_{V_0/Z_0}^{s-k}\}_s$$
be the weight filtration of the complex $(h_0)_{\ast}\Omega^{\bullet}_{V_0/Z_0}(\log D)$.
In particular $\mathcal{W}_k$ is a complex.
When we will need to refer to the trace left by $\mathcal{W}_k$
on an element $(h_0)_{\ast}\Omega^s_{V_0/Z_0}(\log D)$ of the complex
we will write $\mathcal{W}_k((h_0)_{\ast}\Omega^s_{V_0/Z_0}(\log D))$.

Let $h\colon V\rightarrow Z$ be a morphism such that $\Sigma_Z$
is a simple normal crossing divisor
and let $\tau\colon Z'\rightarrow Z$ be a morphism
from a nonsingular variety $Z'$ such that $\tau^{-1}\Sigma_Z$
is a simple normal crossing divisor.
Let $V'$ be a desingularization of the component of $V\times_Z Z'$
that dominates $Z'$.
$$
\xymatrix{
V' \ar[d]_{h'} \ar[r] 
& V \ar[d]^{h}\\
Z' \ar[r]_{\tau}
& Z. }
$$
Assume now that $h'$ and $h$ are such
that $R^d h_{\ast}\mathbb{C}_{V_0}$ and $R^d h'_{\ast}\mathbb{C}_{V'_0}$ have unipotent monodromies.
By Proposition \ref{canext}[c] we have a commutative diagram
of sheaves on $Z'$
$$
\xymatrix{
(h')_{\ast}\omega_{V'/Z'} \ar[d]_{\alpha} \ar[r]^{\sim} & (i')_{\ast}(\mathcal{F'}_{0})\cap \mathcal{H}' \ar[d]^{\beta}\\
\tau^{\ast}(h)_{\ast}\omega_{V/Z} \ar[r]^{\sim} & \tau^{\ast}(i)_{\ast}(\mathcal{F}_{0})\cap \mathcal{H}. }
$$
where $i\colon Z_{0}\rightarrow Z$, $i'\colon Z'_{0}\rightarrow Z'$ are inclusions,
$\mathcal{H}$ (resp. $\mathcal{H'}$) is the canonical extension of $R^d h_{\ast}\mathbb{C}_{V_{0}}$
(resp. $R^d h'_{\ast}\mathbb{C}_{V'_{0}}$),
$\mathcal{F}_0=h_{\ast}\omega_{V/Z|_{Z_0}}$ (resp. $\mathcal{F'}_0=h'_{\ast}\omega_{V'/Z'|_{Z'_0}}$)
and $\alpha,\beta$ are the pullbacks by $\tau$.
If we have an isomorphism $$\alpha\colon (h')_{\ast}\omega_{V'/Z'}\rightarrow \tau^{\ast}(h)_{\ast}\omega_{V/Z}$$
then for all $p\in Z'$ we have an isomorphism of $\mathbb{C}$-vector spaces
$$\alpha_p \colon ((h')_{\ast}\omega_{V'/Z'})_p\rightarrow (h_{\ast}\omega_{V/Z})_{\tau(p)}.$$
If $\tau$ is a birational automorphism of $Z$ that fixes $p$,
then $\alpha_p$ is an element of the linear group of $((h)_{\ast}\omega_{V/Z})_p$.
In particular we have the following:

\begin{prop}\label{azioneazione}
Let $h\colon V\rightarrow Z$ be a fibration 
such that $R^d h_{\ast}\mathbb{C}_{V_0}$ has unipotent monodromies.
Assume that we have an action of a group $G$ on $Z$
given by a homomorphism $$G\rightarrow {\rm Bir}(Z)=\{\nu\colon Z\dasharrow Z|\,\nu\,{\rm is}\,{\rm birational}\}.$$
Let $G_p$ be the stabilizer of $p\in Z$.
Then we have an induced action of $G_p$ on $\mathcal{H}_p$ and on $(h_{\ast}\omega_{V/Z})_p$
and these actions commute with the inclusion $(h_{\ast}\omega_{V/Z})_p\subseteq \mathcal{H}_p$.
\end{prop}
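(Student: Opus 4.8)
The plan is to attach to each element $g\in G_p$ an automorphism of the fibre $\mathcal{H}_p$ (respectively $(h_{\ast}\omega_{V/Z})_p$) by feeding the birational automorphism $\tau_g$ of $Z$ determined by $g$ into the base-change construction developed just before the statement, and then to verify that $g\mapsto(\text{this automorphism})$ is a homomorphism. First I would fix $g\in G_p$, let $\tau_g\in\mathrm{Bir}(Z)$ be its image, and form the fibre product $V\times_{Z,\tau_g}Z$, taking $V'_g$ to be a desingularization of the component dominating the target $Z$, with induced fibration $h'_g\colon V'_g\to Z$. Because $\tau_g$ is birational it is an isomorphism over a dense open $U\subseteq Z$, so the local system $R^d(h'_g)_{\ast}\mathbb{C}$ agrees with $\tau_g^{\ast}R^d h_{\ast}\mathbb{C}$ over $U$ and therefore also has unipotent monodromies; thus Proposition \ref{canext}[c] applies to the square relating $h'_g$ and $h$, producing the commutative diagram displayed above with $\alpha,\beta$ the pullbacks by $\tau_g$.

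Next I would convert these pullbacks into endomorphisms of the original sheaves. By Proposition \ref{basechange} (equivalently by the uniqueness part of Proposition \ref{canext}[a]) the canonical extensions and the bottom Hodge pieces of $h'_g$ and of $h$ are canonically identified, so composing with $\alpha,\beta$ yields isomorphisms $\Phi_g\colon h_{\ast}\omega_{V/Z}\xrightarrow{\sim}\tau_g^{\ast}h_{\ast}\omega_{V/Z}$ and $\Psi_g\colon\mathcal{H}\xrightarrow{\sim}\tau_g^{\ast}\mathcal{H}$. Passing to fibres over $p$ and using $\tau_g(p)=p$ gives $\Phi_{g,p}\in GL\bigl((h_{\ast}\omega_{V/Z})_p\bigr)$ and $\Psi_{g,p}\in GL(\mathcal{H}_p)$. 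The compatibility with the inclusion is then read off directly from the commutative square of Proposition \ref{canext}[c]: its horizontal isomorphisms identify $h_{\ast}\omega_{V/Z}$ with the bottom piece $(i)_{\ast}(\mathcal{F}_0)\cap\mathcal{H}$ of the Hodge filtration, so $\Phi_g$ is simply the restriction of $\Psi_g$ to this subbundle; restricting to fibres shows that $\Psi_{g,p}$ preserves $(h_{\ast}\omega_{V/Z})_p\subseteq\mathcal{H}_p$ and restricts there to $\Phi_{g,p}$.

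It then remains to check that $g\mapsto\Psi_{g,p}$ (and likewise $g\mapsto\Phi_{g,p}$) is a group homomorphism, so that we genuinely obtain an action of $G_p$. Here I would use that $\tau_{g_1g_2}=\tau_{g_1}\circ\tau_{g_2}$ in $\mathrm{Bir}(Z)$ together with the functoriality of the canonical extension: the pullback isomorphisms $\Psi_g$ are compatible with composition of the $\tau_g$ because, by the uniqueness in Proposition \ref{canext}[a] and the exactness and compatibility with $\otimes,\wedge,\mathrm{Hom}$ in Proposition \ref{canext}[c], the fibre map over the fixed point $p$ depends only on the birational datum $\tau_g$ and not on the auxiliary desingularization $V'_g$. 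Combined with the identity $(\tau_{g_2}^{\ast}\Psi_{g_1})\circ\Psi_{g_2}=\Psi_{g_1g_2}$ and the fact that all pullback fibres over $p$ are canonically $\mathcal{H}_p$, this forces $\Psi_{g_1g_2,p}=\Psi_{g_1,p}\circ\Psi_{g_2,p}$ on $\mathcal{H}_p$, and the same identity restricts to $(h_{\ast}\omega_{V/Z})_p$.

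The main obstacle I anticipate is precisely this last point of well-definedness: since $\tau_g$ is only a birational map, the base change and the pullbacks $\alpha,\beta$ are a priori defined only over the open locus where $\tau_g$ is an isomorphism, and one must argue that the resulting identifications extend uniquely across $\Sigma_Z$ and are independent of the chosen models $V'_g$. Controlling this is exactly what the uniqueness of the canonical extension (Proposition \ref{canext}[a]) and the unipotence of the monodromies buy us, and verifying that the construction localizes correctly at the fixed point $p$ --- so that composition of birational maps induces composition of fibre automorphisms --- is the delicate step on which the homomorphism property, and hence the existence of the $G_p$-action, rests.
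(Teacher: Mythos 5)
Your proposal is correct and follows essentially the same approach as the paper, which offers no separate proof but presents the proposition as an immediate consequence (``In particular we have the following'') of precisely the construction you describe: the desingularized fibre product, the commutative diagram supplied by Proposition \ref{canext}[c] identifying $h_{\ast}\omega_{V/Z}$ with the bottom Hodge piece of the canonical extension, and evaluation of the resulting pullback isomorphism at the fixed point $p$. Your verification of the homomorphism property and of well-definedness across $\Sigma_Z$ fills in details the paper leaves implicit; the one small slip is citing Proposition \ref{basechange} (which concerns moduli parts of lc-trivial fibrations) for the identification of canonical extensions, where the uniqueness in Proposition \ref{canext}[a], which you also invoke, is the correct tool.
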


By ~\cite[Theorem 17]{Kaw81} we can assume that
$R^d h_{\ast}\mathbb{C}_{V_0}$
(or, more in general, a local system that has quasi-unipotent monodromies) has unipotent monodromies
modulo a finite base change by a Galois morphism.
We restate Kawamata's result in a more precise way that is useful for our purposes.
\begin{teo}[Theorem 17, Corollary 18 ~\cite{Kaw81}]\label{kawamio}
Let $h\colon V\rightarrow Z$ be an algebraic fibre space.
Let $Z_0\subseteq Z$ be the largest Zariski open set where $h$ is smooth
and $V_0=h^{-1}Z_0$.
Assume that $\Sigma_Z=Z\backslash Z_0$ and $\Sigma_V=V\backslash V_0$ are simple normal crossing divisors
on $Z$ and $V$. Set $d=\dim V-\dim Z$.\\
Then there exists a finite surjective morphism $\tau\colon Z'\rightarrow Z$
from a nonsingular projective algebraic variety $Z'$ such that for a desingularization
$V'$ of $V\times_{Z} Z'$ the morphism $h'\colon V'\rightarrow Z'$
induced from $h$ is such that
$R^d h'_{\ast}\mathbb{C}_{V'_0}$ has unipotent monodromies.

Moreover $\tau$ is a composition of cyclic coverings $\tau_j$
$$\tau\colon Z'=Z_{k+1}\xrightarrow{\tau_k} Z_k\ldots Z_2\xrightarrow{\tau_1} Z_1=Z$$
where $\tau_j$ is defined by the building data
$$A_j^{\otimes \delta_j}\sim H_j$$
where $A_j$ is very ample on $Z_j$ and $H_j$ is simple normal crossing.
\end{teo}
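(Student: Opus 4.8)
The plan is to follow Kawamata's original argument, whose key input is the monodromy theorem. First I would invoke the fact (Borel, Landman, Grothendieck) that for a family of smooth projective varieties the local monodromy transformations of $R^d h_{\ast}\mathbb{C}_{V_0}$ around each irreducible component $\Sigma_i$ of $\Sigma_Z$ are \emph{quasi-unipotent}: their eigenvalues are roots of unity. Hence there is a single positive integer $N$, the least common multiple of the orders of the semisimple parts of all the local monodromies $T_i$, such that $T_i^{N}$ is unipotent for every $i$. The goal then becomes to produce a base change $\tau\colon Z'\to Z$ that, around the preimage of each $\Sigma_i$, raises the local monodromy to the $N$-th power, since the pullback of the family along a map locally of the form $w\mapsto w^{N}=z$ realizes exactly this operation (a loop around $w=0$ winds $N$ times around $z=0$), and $T_i^{N}$ is unipotent.

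Next I would reduce to the construction of a single cyclic cover totally ramified to order $N$ along a prescribed SNC divisor. The elementary model is: if $H$ is a reduced SNC divisor on a smooth $Z$ and $H\sim N A$ for a line bundle $A$, then adjoining an $N$-th root of the defining section yields a cyclic cover ramified to order $N$ along $H$, and over a general point of $H$ the pulled-back family acquires local monodromy $T^{N}$. The difficulty is that $\Sigma_i$ need not be divisible by $N$ in the Picard group, so a naive cyclic cover along $\Sigma_i$ alone does not exist. This is resolved by the Bloch--Gieseker/Kawamata covering trick: choose $A_j$ very ample so that $N A_j-\Sigma_i$ is very ample, pick a general smooth member $D\in|N A_j-\Sigma_i|$, and set $H_j=\Sigma_i+D\sim N A_j$. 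By Bertini $D$ is smooth, $\Sigma_Z+D$ is SNC, and $D$ meets the smooth locus $Z_0$ generically, so near a general point of $D$ the morphism $h$ is smooth and carries trivial monodromy; after the ramified base change the monodromy around the preimage of $D$ is $(\mathrm{id})^{N}$, hence unipotent, while around the preimage of $\Sigma_i$ it becomes $T_i^{N}$, unipotent by the choice of $N$. This produces precisely the stated building data $A_j^{\otimes \delta_j}\sim H_j$ with $A_j$ very ample and $H_j$ SNC.

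I would then assemble the full morphism as a tower. Treating the components $\Sigma_i$ one at a time, each step $\tau_j\colon Z_{j+1}\to Z_j$ is a cyclic cover as above, after which I desingularize the cover and the fibre product with $V$ so as to keep $Z_{j+1}$ nonsingular and the total boundary SNC; this is what forces a genuine composition of cyclic coverings rather than a single one, giving the factorization $\tau\colon Z'=Z_{k+1}\to\cdots\to Z_1=Z$. The point that makes the induction work is that unipotence, once achieved, is preserved: the pullback of a unipotent local system stays unipotent, and a further ramified base change only raises an already unipotent monodromy to a power, which remains unipotent. Therefore after finitely many cyclic covers \emph{all} local monodromies of the pulled-back family are unipotent, and for a desingularization $V'$ of $V\times_Z Z'$ the sheaf $R^d h'_{\ast}\mathbb{C}_{V'_0}$ has unipotent monodromies.

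The main obstacle is the divisibility problem in the middle paragraph: constructing cyclic covers with the prescribed ramification while simultaneously (i) not introducing new non-unipotent monodromy along the auxiliary branch divisor $D$, (ii) preserving the simple normal crossing condition throughout the tower, and (iii) ensuring that the desingularizations needed at each stage do not destroy the unipotence already gained. Controlling these three points at once — through the generality of the auxiliary members, the uniform bound $N$ from the monodromy theorem, and the stability of unipotence under pullback and under taking powers — is the technical heart of the argument.
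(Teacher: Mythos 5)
The paper itself contains no proof of this statement: it is quoted verbatim (in sharpened form) from Kawamata, so the only argument to compare yours with is Kawamata's own, and your outline does follow it in its main lines --- quasi-unipotence of the local monodromies $T_i$ by the monodromy theorem, a uniform exponent $N$ killing the semisimple parts, cyclic covers with building data $H_j=\Sigma_i+D\sim \delta_j A_j$ so that the local picture $w\mapsto w^N$ replaces $T_i$ by $T_i^N$, triviality of the new monodromy along the auxiliary branch divisor $D$, and stability of unipotence under pullback and powers. All of that is the right skeleton.

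There is, however, one concrete gap, and it sits exactly at the feature of the statement that this paper needs. You propose to keep $Z_{j+1}$ nonsingular by \emph{desingularizing} the cyclic cover (and the fibre product) after each step. But then $\tau$ would be a composition of finite covers interlaced with birational modifications, hence in general not finite and not ``a composition of cyclic coverings defined by building data $A_j^{\otimes\delta_j}\sim H_j$'' --- which is precisely the content of Corollary 18 and precisely what is used later in the paper: the proof of Theorem \ref{mainteo}, case \textbf{(ii)}, needs the genuine cyclic structure to write $\tau_{\ast}\mathcal{O}_{Z_{k+1}}\cong\bigoplus_{l=0}^{\delta_k-1}A_k^{-l}$, and the proof of Theorem \ref{bettieffcart} needs $\tau$ finite and Galois to average over the group action; Proposition \ref{basechange} alone would not suffice. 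In Kawamata's argument smoothness is never obtained by resolution: for each component $\Sigma_i$ one chooses \emph{several} general members $H_{i,1},\dots,H_{i,r}\in|\delta_i A_i-\Sigma_i|$ with $\bigcap_j H_{i,j}=\emptyset$, takes the normalization of the fibre product of the corresponding cyclic covers, and checks by a local computation that this normal cover is already smooth: at any point at least one auxiliary section is a unit, so after normalization the cover is locally an iterated extraction of $N$-th roots of \emph{distinct} members of a regular system of parameters (the SNC hypothesis on $\Sigma_Z+\sum H_{i,j}$, guaranteed by Bertini, is what makes them distinct coordinates), and such a root extraction is smooth, with SNC preimage of the boundary. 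This normalized fibre product then factors as the tower of cyclic covers in the statement. So the three difficulties you list at the end are real, but their resolution is not ``desingularize as needed''; it is the multiple-general-members-plus-normalization mechanism, without which the theorem in the precise form stated here (finiteness of $\tau$ and the explicit cyclic building data) would not follow from your construction.
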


We have the following results by Deligne. We state them in our situation, but they hold in a more general setting.
\begin{teo}[Theorem 4.2.6 ~\cite{DeligneII}]\label{semisimple}
The representation of the fundamental group $\pi_1(Z_0,z)$ on the fibre $\mathcal{H}_{\mathbb{Q},z}$
is semi-simple.
\end{teo}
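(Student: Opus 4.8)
This is Deligne's semisimplicity theorem, so the proof I would give is really a recollection of the argument in \cite{DeligneII}, whose engine is the polarization carried by the variation $\mathcal{H}_{\mathbb{Q}}$. The plan is to pass from the geometric statement about $\pi_1(Z_0,z)$ to the categorical statement that $\mathcal{H}_{\mathbb{Q}}$ is a semisimple local system, using throughout the equivalence between local systems on $Z_0$ and representations of $\pi_1(Z_0,z)$: under this equivalence semisimplicity of one object is equivalent to semisimplicity of the other, and sub-local systems correspond to subrepresentations.

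First I would record that $\mathcal{H}_{\mathbb{Q}}=(R^d h_{\ast}\mathbb{Q}_{V_0})_{prim}$ underlies a \emph{polarizable} variation of Hodge structure: the cup product together with the primitive decomposition on the fibres supplies a flat bilinear form $Q$ which polarizes the Hodge structure on each fibre $\mathcal{H}_{\mathbb{Q},z}$. The key elementary point is that a polarization of a Hodge structure is non-degenerate on every sub-Hodge-structure: if $W\subseteq V$ is a sub-Hodge-structure then the Weil operator $C$ preserves $W$, so the positive definite Hermitian form attached to $Q$ restricts to a positive definite form on $W$, whence $Q|_{W}$ is non-degenerate. Consequently, for any sub-variation $\mathbb{W}\subseteq\mathcal{H}_{\mathbb{Q}}$ the flat orthogonal complement $\mathbb{W}^{\perp}$ is again a sub-variation and a genuine complement; this shows that the category of polarizable variations of Hodge structure is semisimple and, in particular, that $\mathcal{H}_{\mathbb{Q}}$ splits into simple sub-variations.

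The step that does the real work, and which I expect to be the main obstacle, is to upgrade this \emph{categorical} semisimplicity to semisimplicity of the bare local system: one must show that every sub-\emph{local system} $\mathbb{L}\subseteq\mathcal{H}_{\mathbb{Q}}$ is automatically a sub-\emph{variation of Hodge structure}, so that the orthogonal-complement construction above applies to it. This is the content of Deligne's theorem of the fixed part, and it is not formal: it rests on the asymptotic behaviour of the Hodge filtration near $\Sigma_Z$ --- precisely the canonical extension and the nilpotency of the residues recorded in Proposition \ref{canext} and Remark \ref{lamatrdellaconn} --- via the degeneration of the relevant spectral sequence (equivalently Schmid's nilpotent orbit theorem). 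Granting this, $\mathbb{L}$ is a sub-variation, $\mathbb{L}^{\perp}$ is a flat complement, and $\mathcal{H}_{\mathbb{Q}}$ is semisimple as a local system.

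Finally I would translate back: since the functor sending a local system to its monodromy representation is an equivalence of categories carrying sub-local systems to subrepresentations, semisimplicity of $\mathcal{H}_{\mathbb{Q}}$ as a local system is exactly the assertion that the representation of $\pi_1(Z_0,z)$ on $\mathcal{H}_{\mathbb{Q},z}$ is semisimple, which is the statement.
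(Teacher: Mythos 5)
The paper does not prove Theorem \ref{semisimple} at all: it is imported verbatim from \cite{DeligneII} (Th\'eor\`eme 4.2.6), and its only role here is to feed the proof of Corollary \ref{gabolomixed}. So there is no in-paper argument to compare yours against; the only question is whether your reconstruction of Deligne's proof is sound.

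Its architecture is the standard one and is correct as far as it goes: the cup-product polarization on primitive cohomology, the non-degeneracy of a polarization on any sub-Hodge structure via the Weil operator, flat orthogonal complements, and the dictionary between local systems on $Z_0$ and representations of $\pi_1(Z_0,z)$ are all the right skeleton. The gap is in the step you yourself flag as the crucial one. The assertion that every sub-local system $\mathbb{L}\subseteq\mathcal{H}_{\mathbb{Q}}$ underlies a sub-variation of Hodge structure is \emph{not} the content of the theorem of the fixed part: that theorem concerns the largest \emph{constant} sub-local system (equivalently, the flat global sections), and an arbitrary $\mathbb{L}$ has no flat sections in general, so the fixed part theorem does not apply to it directly. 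In \cite{DeligneII} the passage from the fixed part theorem to the statement about arbitrary sub-local systems is a genuine, separate argument --- the heart of Deligne's Section 4.2, where the fixed part theorem is applied to auxiliary tensor constructions built from $\mathcal{H}$ in order to control the Zariski closure of the monodromy group, and only then is semisimplicity deduced. By writing ``Granting this'' you have granted precisely the point where the theorem could fail; as a self-contained proof the proposal is therefore incomplete, although as a map of Deligne's proof it is essentially faithful. A further, minor, point of attribution: for the geometric local system $(R^d h_{\ast}\mathbb{Q}_{V_0})_{prim}$ at hand, Deligne's own proof of the fixed part theorem goes through degeneration of the Leray spectral sequence and semi-purity, not through the canonical extension and nilpotent residues of Proposition \ref{canext} and Remark \ref{lamatrdellaconn}; the route you describe is Schmid's later argument for abstract polarizable variations, which also works but is a different proof.
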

Here we prove a slight modification of ~\cite[Corollary 4.2.8(ii)(b)]{DeligneII}.
\begin{cor}\label{gabolomixed}
Let $W$ be a local subsystem of $\mathcal{V}_{\mathbb{C}}$ of rank one.
Let $b=\dim \mathcal{V}$.
Let $$m(x)={\rm lcm}\{k|\;\phi(k)\leq x\}$$ where $\phi$ is the Euler function.
Then $W^{\otimes m(b)}$ is a trivial local system. 
\end{cor}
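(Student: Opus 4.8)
The plan is to reinterpret the rank-one local system $W$ as a character and to bound the order of its monodromy. A local system of rank one on $Z_0$ is the same datum as a one-dimensional representation of $\pi_1(Z_0,z)$, i.e.\ a character $\chi\colon\pi_1(Z_0,z)\to\mathbb{C}^{\ast}$, and $W^{\otimes N}$ is a trivial local system if and only if $\chi^N=1$, that is, if and only if every value $\chi(\gamma)$ is an $N$-th root of unity. Since $W\subseteq\mathcal{V}_{\mathbb{C}}$, for each loop $\gamma$ the scalar $\chi(\gamma)$ is an eigenvalue of the monodromy $\rho(\gamma)$ acting on the fibre; as $\mathcal{V}_{\mathbb{C}}$ comes from an integral variation of Hodge structure, $\rho(\gamma)$ preserves a lattice, so its characteristic polynomial is monic with integer coefficients and degree $b=\dim\mathcal{V}$. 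Hence each $\chi(\gamma)$ is an algebraic integer whose minimal polynomial over $\mathbb{Q}$ has degree at most $b$. I would thus reduce the statement to two points: that every $\chi(\gamma)$ is a root of unity, and that the order $k=k(\gamma)$ of this root of unity satisfies $\phi(k)\le b$.

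The heart of the argument, following Deligne, is to show that \emph{all} the Galois conjugates of $\chi(\gamma)$ have absolute value one, so that Kronecker's theorem forces $\chi(\gamma)$ to be a root of unity. Here I would invoke Theorem~\ref{semisimple}: the monodromy is semisimple, so $W$ is a direct summand of $\mathcal{V}_{\mathbb{C}}$, and the set of rank-one constituents of $\mathcal{V}_{\mathbb{C}}=\mathcal{V}_{\mathbb{Q}}\otimes\mathbb{C}$ is stable under the action of $\mathrm{Gal}(\overline{\mathbb{Q}}/\mathbb{Q})$ on coefficients, because $\mathcal{V}_{\mathbb{Q}}$ is defined over $\mathbb{Q}$. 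Therefore, for every $\sigma$, the conjugate character $\sigma\chi$ is again the monodromy character of a rank-one constituent of the polarizable variation $\mathcal{V}_{\mathbb{C}}$. A rank-one polarizable variation of Hodge structure has constant Hodge filtration, hence a flat positive-definite Hodge metric, hence unitary monodromy; consequently $|\sigma\chi(\gamma)|=1$ for every $\sigma$ and every $\gamma$, and Kronecker's theorem (an algebraic integer all of whose conjugates lie on the unit circle is a root of unity) shows that $\chi(\gamma)$ is a root of unity. I expect this to be the main obstacle: integrality and semisimplicity alone do not suffice, since the semisimple integral monodromy $\bigl(\begin{smallmatrix}1&1\\1&0\end{smallmatrix}\bigr)$ has the golden ratio as a one-dimensional eigen-character, which is not a root of unity, and it is exactly the polarization that excludes such behaviour.

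It then remains to make the order explicit, which is the quantitative refinement over Deligne's statement. If $\chi(\gamma)$ is a primitive $k$-th root of unity, its minimal polynomial is the cyclotomic polynomial $\Phi_k$, of degree $\phi(k)$; comparing with the degree bound above gives $\phi(k)\le b$, so $k$ lies in the finite set $\{k\mid\phi(k)\le b\}$ and therefore divides $m(b)=\mathrm{lcm}\{k\mid\phi(k)\le b\}$. Thus $\chi(\gamma)^{m(b)}=1$ for every $\gamma$, i.e.\ $\chi^{m(b)}=1$, and $W^{\otimes m(b)}$ is a trivial local system. The finiteness of $\{k\mid\phi(k)\le b\}$, needed for $m(b)$ to be well defined, follows from $\phi(k)\to\infty$.
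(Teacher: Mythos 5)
Your overall strategy runs parallel to the paper's own proof: use the semisimplicity theorem (Theorem \ref{semisimple}) to realize $W$ as a direct summand, show that the monodromy character $\chi$ and all of its algebraic conjugates take values of modulus one, combine integrality with Kronecker's theorem to conclude that each $\chi(\gamma)$ is a root of unity, and then bound its order $k$ by $\phi(k)\le b$ so that $k$ divides $m(b)$. That quantitative endgame is correct and is exactly the paper's, and your golden-ratio example correctly identifies that semisimplicity and integrality alone cannot suffice, so that the polarization must enter.

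The genuine gap is in the one step that carries all the weight: you pass from ``$\sigma\chi$ is the monodromy character of a rank-one \emph{constituent} of the polarizable variation $\mathcal{V}_{\mathbb{C}}$'' to ``a rank-one \emph{polarizable variation of Hodge structure} has constant Hodge filtration, hence unitary monodromy.'' A rank-one flat direct summand of $\mathcal{V}_{\mathbb{C}}$ is just a local subsystem; nothing you have invoked guarantees that the Hodge filtration or the polarization induces on it the structure of a variation of Hodge structure --- a flat summand can sit diagonally with respect to the Hodge decomposition, and Theorem \ref{semisimple} concerns only the monodromy representation, saying nothing about compatibility of flat summands with the Hodge data. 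So your unitarity argument applies only to objects already known to be variations of Hodge structure, which is precisely what is missing; the statement you need is true, but it is itself a nontrivial theorem of Deligne (that irreducible constituents of a polarizable variation underlie polarizable complex variations, from his later work on finiteness of monodromy, not cited in this paper), and in the present context proving something of this kind \emph{is} the point of the corollary. The paper's proof is built exactly to avoid this assertion: it embeds $W_z$ in its isotypic component $\mathcal{H}_1=\mathcal{H}_\lambda^{\oplus k}$, uses the $\mathcal{S}=\mathbb{C}^{\ast}$-action (Deligne's 4.2.8(ii)(a), \cite{DeligneII}) to show that $\mathcal{H}_1$, and hence the real span $\hat{\mathcal{H}}_1=\mathcal{H}_1+\bar{\mathcal{H}_1}$, is a real Hodge substructure carrying a non-degenerate invariant form and an integral structure, and then extracts $|\chi|=1$ from a determinant computation: the monodromy acts on $\bigwedge^e\hat{\mathcal{H}}_1$ by $\pm1$, and this determinant equals $\chi^{k}$ or $\chi^k\bar{\chi}^{k}$ according as $\mathcal{H}_1$ is real or not. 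Replacing your unjustified assertion either by this isotypic-component/determinant argument or by an explicit appeal to Deligne's finiteness theorem would close the gap; as written, the proof assumes the hard part.
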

\begin{proof}
By Theorem \ref{semisimple} we can write
$$\mathcal{H}_{\mathbb{C},z}=\bigoplus_{i=1}^r \mathcal{H}_i$$
where the $\mathcal{H}_i$ are the isotypic compontents (that is the components that are direct sum
of simple representations of the same weight).
Since $\dim W=1$, the subspace $W_z$ is contained in one isotypic component, let's say $\mathcal{H}_1$.
We have $$\mathcal{H}_1=\mathcal{H}_{\lambda}^{\oplus k},$$
where $\mathcal{H}_{\lambda}$ is a simple component of weight $\lambda$.
Since $W_z$ is simple, it identifies to one of the $\mathcal{H}_{\lambda}$'s and then
$$\bigwedge^k \mathcal{H}_1\cong W_z^{\otimes k}.$$
If $\chi$ is the character that determines $W_z$ as a representation
then the character $\chi^k$ determines $\bigwedge^k \mathcal{H}_1$.
Let $\mathcal{S}$ be the real algebraic group $\mathbb{C}^{\ast}$.
We have an action of $\mathcal{S}$ on $\mathcal{H}_1$.
Indeed by ~\cite[Corollary 4.2.8(ii)(a)]{DeligneII} for all $t\in\mathcal{S}$ we have
$t\mathcal{H}_1\cong\mathcal{H}_1$. In particular $t\mathcal{H}_1$ and $\mathcal{H}_1$ have the same weight.
But since $\mathcal{H}_1$ is isotypic, we have $t\mathcal{H}_1=\mathcal{H}_1$.
The vector space $\hat{\mathcal{H}}_1=\mathcal{H}_1+\bar{\mathcal{H}_1}$
is real and $\mathcal{S}$-invariant,
thus it is defined by a real Hodge substructure of $\mathcal{H}_{\mathbb{R},z}$.
Thus a polarization on $\mathcal{H}$ induces a non-degenerate bilinear form on $\hat{\mathcal{H}}_1$
that is invariant under the action of $\pi_1(Z_0,z)$.\\
Let $e=\dim \hat{\mathcal{H}}_1$. The Hodge structure on $\hat{\mathcal{H}}_1$
comes from an integer structure and thus $(\bigwedge^e \hat{\mathcal{H}}_1)^{\otimes 2}$
is trivial because $\pi_1(Z_0,z)$ can act on a $\mathbb{Z}$-module of rank one only by $\pm 1$.
Since $\hat{\mathcal{H}}_1=\mathcal{H}_1+\bar{\mathcal{H}_1}$, there are two possibilities:
\begin{description}
\item[(a)] if $\mathcal{H}_1$ is real, the character $\chi^{2k}$ is trivial 
\item[(b)] else $\chi^{2k}\bar{\chi}^{2k}$ is trivial.
\end{description}
In any case $|\chi|=1$.\\
The representation of $\pi_1(Z_0,z)$ on $\mathcal{H}_{\mathbb{C},z}$
comes from a representation on $\mathcal{H}_{\mathbb{Q},z}$
and all the conjugated representations of $W_z$ appear in $\mathcal{H}_{\mathbb{C},z}$.
Thus we have at most $b=\dim \mathcal{H}$ conjugated representations of $W_z$.

We proved that for all $\gamma \in\pi_1Z_0,z)$ the number
$\chi(\gamma)$ is a complex number of module one and with at most $b$
complex conjugates.
Thus $\chi(\gamma)$ is a $k$-th root of unity, with $k\leq b$.
If we define $$m(b)={\rm lcm}\{k|\;\phi(k)\leq b\}$$
where $\phi$ is the Euler function,
then $\chi^{m(b)}$ is trivial.
\end{proof}

\subsection{Covering tricks}
In order to give an interpretation of the moduli part
in terms of variation of Hodge structures we need to 
consider an auxiliary log pair $(V,B_V)$
with a fibration $h\colon V\rightarrow Z$.

Let $f\colon X\rightarrow Z$ be an lc-trivial fibration.
Set $\Sigma_Z={\rm Supp}B_Z$ and we assume that $\Sigma_Z$ is a simple normal crossing divisor.
Set $\Sigma_X={\rm Supp}f^{\ast}\Sigma_Z$ and assume that $B+\Sigma_X$
has simple normal crossing support.
We define $g\colon V\rightarrow X$ as the desingularization of the covering induced by the field extension

\begin{eqnarray}\label{cicles}
\mathbb{C}(X)\subseteq\mathbb{C}(X)(\sqrt[r]{\varphi})
\end{eqnarray}

that is, the desingularization of the normalization of $X$ in $\mathbb{C}(X)(\sqrt[r]{\varphi})$
where $\varphi$ is as in (\ref{cbf}).
Let $B_V$ be the divisor defined by the equality $K_V+B_V=g^{\ast}(K_X+B)$.
Set $h=f\circ g\colon V\rightarrow Z$. Then $h$ and $f$ induce the same discriminant and moduli divisor.
Let $\Sigma_V$ be the support of $h^{\ast}\Sigma_Z$ and assume that $\Sigma_V+B_V$
has simple normal crossing support.

The Galois group of (\ref{cicles}) is cyclic of order $r$, then
we have an action of $$\mu_r=\{x\in\mathbb{C}\,|\,x^r=1\}$$
on $g_{\ast}\mathcal{O}_V$.
Then we have also an action of $\mu_r$ on $h_{\ast}\omega_{V/Z}$ and on $h_{\ast}\omega_{V/Z}(P_V)$
where $P_V$ are the horizontal places of the pair $(V,B_V)$.

\begin{prop}[Claim 8.4.5.5, Section 8.10.3 ~\cite{Corti}]
Let $f\colon X\rightarrow Z$ and $V\rightarrow X$ be as above.
The decomposition in eigensheaves is
$$h_{\ast}\omega_{V/Z}=\bigoplus_{i=0}^{r-1}f_{\ast}\mathcal{O}_X(\lceil(1-i)K_{X/Z}-iB+if^{\ast}B_Z+if^{\ast}M_Z\rceil).$$
Let $P_V$ be the places of $(V,B_V)$ and $P$ the places of $(X,B)$. Then we have
$$h_{\ast}\omega_{V/Z}(P_V)=\bigoplus_{i=0}^{r-1}f_{\ast}\mathcal{O}_X(\lceil(1-i)K_{X/Z}-iB+P+if^{\ast}B_Z+if^{\ast}M_Z\rceil)$$
and the righthand-side is the eigensheaf decomposition of the left-hand-side with respect to the action of $\mu_r$.
\end{prop}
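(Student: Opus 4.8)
The plan is to prove the decomposition on $X$ first, as an identity of $\mu_r$-equivariant reflexive sheaves, and only at the very end to apply $f_*$, which respects the $\mu_r$-grading because $\mu_r$ acts trivially on $Z$. Since $g$ is finite we have $h_*\omega_{V/Z}=f_*\,g_*\omega_{V/Z}$, so everything reduces to computing the eigensheaf decomposition of $g_*\omega_{V/Z}$ on $X$. Writing $K_{V/Z}=g^*K_{X/Z}+R$ with $R=K_{V/X}$ the ramification divisor, I would read off the $\chi^i$-eigensheaf of $g_*\omega_{V/Z}$ from that of $g_*\omega_{V/X}$ twisted by $K_{X/Z}$. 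All the sheaves involved are reflexive, so it suffices to establish every identity in codimension one on $X$, i.e.\ at the generic point of each prime divisor, where $K_{X/Z}$ is Cartier, where $\varphi$ has a well-defined order, and where $V\to X$ is the normalization of a DVR in $\mathbb{C}(X)(\sqrt[r]{\varphi})$; this is completely explicit and sidesteps any question about $\mathbb{Q}$-Cartier indices.

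The core is the standard cyclic-cover computation. Setting $\psi=\sqrt[r]{\varphi}$, with $\mu_r$ acting by $\psi\mapsto\zeta\psi$ and $\chi(\zeta)=\zeta$, one gets $g_*\mathcal{O}_V=\bigoplus_{i=0}^{r-1}\mathcal{O}_X(\lfloor\tfrac{i}{r}(\varphi)\rfloor)$, the $i$-th summand being the $\chi^i$-eigensheaf generated by $\psi^i$. For $\omega$ I would invoke relative duality $g_*\omega_{V/X}\cong\mathcal{H}om_{\mathcal{O}_X}(g_*\mathcal{O}_V,\mathcal{O}_X)$, again an identity of reflexive sheaves checkable in codimension one. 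The trace pairing is $\mu_r$-invariant, so the $\chi^a$-eigensheaf of $g_*\omega_{V/X}$ is dual to the $\chi^{r-a}$-eigensheaf of $g_*\mathcal{O}_V$, namely $\mathcal{O}_X(-\lfloor\tfrac{r-a}{r}(\varphi)\rfloor)$; since $(\varphi)$ is integral and principal, multiplication by $\varphi^{-1}$ is a $\mu_r$-equivariant isomorphism identifying this with $\mathcal{O}_X(\lceil\tfrac{a}{r}(\varphi)\rceil)$. Hence the $\chi^i$-eigensheaf of $g_*\omega_{V/Z}$ is $\mathcal{O}_X(\lceil K_{X/Z}+\tfrac{i}{r}(\varphi)\rceil)$.

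It then remains to match this with the stated formula. Using the canonical bundle formula $K_X+B+\tfrac1r(\varphi)=f^*(K_Z+B_Z+M_Z)$, multiplying by $i$ and substituting gives the divisor identity $K_{X/Z}+\tfrac{i}{r}(\varphi)=(1-i)K_{X/Z}-iB+if^*B_Z+if^*M_Z$, so the two round-ups coincide. Applying $f_*$, which commutes with the $\mu_r$-grading, yields $h_*\omega_{V/Z}=\bigoplus_{i=0}^{r-1}f_*\mathcal{O}_X(\lceil(1-i)K_{X/Z}-iB+if^*B_Z+if^*M_Z\rceil)$, and by construction this is the $\mu_r$-eigensheaf decomposition.

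For the version with places I would argue in parallel, the only new input being that the places behave well under $g$. From $K_V+B_V=g^*(K_X+B)$, a horizontal prime divisor $D$ with coefficient $1$ in $B$ has preimages again of coefficient $1$ in $B_V$: along a component $D'$ over $D$ with ramification index $e$ the coefficient is $e(b_D-1)+1$, equal to $1$ exactly when $b_D=1$. Thus $P_V$ lies over $P$, and repeating the codimension-one computation for $g_*(\omega_{V/X}(P_V))$ deposits the reduced divisor $P$ inside each eigensheaf. The main obstacle is precisely this last bookkeeping: at the generic point of each place one must check that the interplay of the ramification of $g$, the round-up $\lceil\cdot\rceil$, and the reduced divisor $P_V$ produces exactly $P$ with coefficient one (not weighted by the ramification index). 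Keeping the eigen-character matching consistent through the duality step — so that the $\chi^i$-piece downstairs is the one claimed — is the other point that demands care.
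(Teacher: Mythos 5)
Your computational core is sound, and it is essentially the standard Esnault--Viehweg cyclic-cover calculus: for the normalization $\bar g\colon \bar V\to X$ of $X$ in $\mathbb{C}(X)(\sqrt[r]{\varphi})$ the eigensheaves of $\bar g_*\mathcal{O}_{\bar V}$ are $\mathcal{O}_X\bigl(\lfloor \tfrac{i}{r}(\varphi)\rfloor\bigr)\psi^i$, trace duality turns floors into ceilings, and substituting $K_X+B+\tfrac1r(\varphi)=f^*(K_Z+B_Z+M_Z)$ produces exactly the divisors $(1-i)K_{X/Z}-iB+if^*B_Z+if^*M_Z$ of the statement; the check that horizontal places of $(V,B_V)$ lie over places of $(X,B)$ (coefficient $e(b_D-1)+1$) is also correct. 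Note that the paper itself gives no proof of this proposition --- it is quoted from Corti's book --- so the relevant comparison is with the argument there, which is precisely the one you are reconstructing, but carried out over a model where the branch divisor is simple normal crossing.

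That last point is where your proposal has a genuine gap, and it is not the bookkeeping issue you flag at the end. The proposition concerns $V$, a \emph{desingularization} of $\bar V$, and your strategy rests on the claim that all sheaves involved are reflexive, so that every identity can be checked in codimension one on $X$. This fails for the very sheaf in question: writing $g=\bar g\circ\pi$ with $\pi\colon V\to\bar V$ the resolution, one has $g_*\omega_{V/Z}=\bar g_*(\pi_*\omega_{V/Z})$, and $\pi_*\omega_V$ is in general \emph{not} reflexive. It agrees with $\omega_{\bar V}$ on the smooth locus of $\bar V$, hence in codimension one over $X$, but can be strictly smaller: already for the cone $x^3+y^3+z^3=0$ one has $\pi_*\omega_V=\mathfrak m_0\cdot\omega_{\bar V}\subsetneq\omega_{\bar V}$. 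So your codimension-one computation identifies $\bar g_*\omega_{\bar V/Z}$, not $h_*\omega_{V/Z}$, and the two differ exactly when $\bar V$ has non-rational singularities --- which does happen for cyclic covers branched along non-snc divisors (e.g.\ $z^2=f(x,y)$ with $f$ of multiplicity $\geq 6$ at a point is a simple elliptic or cusp singularity), and nothing in the definition of an lc-trivial fibration prevents such behaviour away from the generic fibre. The input you cannot sidestep is the one encoded in the snc hypotheses of the setup ``as above'': once the branch locus is arranged to be snc, the normalization $\bar V$ has only abelian quotient, hence rational, singularities, giving $\pi_*\omega_{V/Z}=\omega_{\bar V/Z}$; only then does the reduction to the normalization, and thus to codimension one, become legitimate. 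The same point in its log version, $\pi_*\bigl(\omega_{V/Z}(P_V)\bigr)=\omega_{\bar V/Z}(\bar P)$, is needed for the second formula, where it is even more delicate because the places are exactly the locus where the cover is most singular.
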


\begin{prop}[Proposition 5.2, ~\cite{Amb04}]\label{mautofascio}
Assume that $R^d h_{\ast}\mathbb{C}_{V_0}$ has unipotent monodromies.
Then $M_Z$ is an integral divisor
and it is equal to the eigensheaf $f_{\ast}\mathcal{O}_X(\lceil-B+P+f^{\ast}B_Z+f^{\ast}M_Z\rceil)$
corresponding to a fixed primitive $r$th root of unity.
\end{prop}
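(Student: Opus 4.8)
The plan is to identify the eigensheaf
\[
\mathcal{L}:=f_{\ast}\mathcal{O}_X(\lceil -B+P+f^{\ast}B_Z+f^{\ast}M_Z\rceil)
\]
with the bottom piece of the Hodge filtration of a $\mu_r$-eigenpiece of the canonical extension, and to read off both the integrality of $M_Z$ and the isomorphism $\mathcal{O}_Z(M_Z)\cong\mathcal{L}$ from the unipotent behaviour near $\Sigma_Z$. First I would record that $\mathcal{L}$ is exactly the $i=1$ summand in the eigensheaf decomposition of $h_{\ast}\omega_{V/Z}(P_V)$ established in the previous proposition, and that it is a line bundle: its rank at the generic point of $Z$ is governed by condition (2) in the definition of an lc-trivial fibration, so $\mathcal{L}$ is a rank-one reflexive sheaf on the smooth base $Z$, hence invertible.

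The second step is the computation over the smooth locus $Z_0=Z\setminus\Sigma_Z$. There $B_Z|_{Z_0}=0$, so $\mathcal{L}|_{Z_0}=f_{\ast}\mathcal{O}_X(\lceil -B+P+f^{\ast}M_Z\rceil)|_{Z_0}$. I would first verify that $M_Z$ is Cartier on $Z_0$, so that $f^{\ast}M_Z$ is integral and may be pulled out of the round-up; the remaining factor $f_{\ast}\mathcal{O}_X(\lceil -B+P\rceil)$ has rank one by condition (2) and is identified with $\mathcal{O}_{Z_0}$ using the canonical bundle formula \eqref{cbf}. This yields $\mathcal{L}|_{Z_0}\cong\mathcal{O}_{Z_0}(M_Z)$ together with the integrality of $M_Z$ away from $\Sigma_Z$.

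It remains to control the behaviour along $\Sigma_Z$, and this is where the unipotency hypothesis enters. The $\mu_r$-action commutes with the Gauss--Manin connection, so it splits the canonical extension $\mathcal{H}$ of $R^d h_{\ast}\mathbb{C}_{V_0}$ into eigen-local-systems; the piece $\mathcal{H}^{(1)}$ inherits unipotent monodromy, and by the identification of $h_{\ast}\omega_{V/Z}$ with the canonical extension of the bottom of the Hodge filtration, $\mathcal{L}$ is precisely the Deligne canonical extension of the rank-one subbundle $\mathcal{F}^d\cap\mathcal{H}^{(1)}$. By Proposition \ref{canext} the connection on $\mathcal{H}^{(1)}$ has logarithmic poles along $\Sigma_Z$ with nilpotent residues, so all eigenvalues of the residues, normalised to lie in $[0,1)$ for the canonical extension, vanish. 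The fractional part of the coefficient of $M_Z$ along a component $\Sigma_i$ is exactly such a normalised residue eigenvalue of the connection acting on the rank-one Hodge piece $\mathcal{L}$, in the sense made explicit by Remark \ref{lamatrdellaconn}; since it vanishes, the coefficient is an integer, and the canonical extension $\mathcal{L}$ coincides with the integral extension $\mathcal{O}_Z(M_Z)$ across $\Sigma_Z$.

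Combining the local and global information gives that $M_Z$ is an integral divisor and that $\mathcal{L}=\mathcal{O}_Z(M_Z)$, as claimed. The main obstacle is the last step: making precise the dictionary between the analytic data of the canonical extension, namely the eigenvalues of the residues of the Gauss--Manin connection controlled by Proposition \ref{canext} and Remark \ref{lamatrdellaconn}, and the algebraic coefficients of the $\mathbb{Q}$-divisor $M_Z$ along $\Sigma_Z$, together with the verification that the fractional part of $M_Z$ is exactly the phase of the monodromy eigenvalue, which unipotency forces to be trivial.
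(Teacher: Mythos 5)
You should first be aware that the paper contains no proof of this statement: it is imported verbatim as Proposition 5.2 of ~\cite{Amb04} (see also Section 8.10 of ~\cite{Corti}), so the only meaningful comparison is with Ambro's argument. Your architecture does match that argument — decompose $h_{\ast}\omega_{V/Z}(P_V)$ into $\mu_r$-eigensheaves, identify the distinguished eigensheaf with the bottom Hodge piece of an eigen-sub-variation, and invoke the Deligne/Koll\'ar canonical extension under unipotent monodromy. The problem is that the step you defer to the end, ``the dictionary between the analytic data of the canonical extension \dots and the algebraic coefficients of the $\mathbb{Q}$-divisor $M_Z$ along $\Sigma_Z$,'' is not a final verification: it \emph{is} the proposition. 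Everything preceding it (rank one via condition (2) of the definition, $B_Z|_{Z_0}=0$, nilpotence of residues via Proposition \ref{canext}) is quoted machinery, while $M_Z$ is defined purely algebraically through the lc thresholds entering $B_Z$ and the formula (\ref{cbf}); nothing in that definition mentions residues. The bridge is built in ~\cite{Amb04} by an explicit local computation at the generic point of each component $\Sigma_i$ of $\Sigma_Z$: one restricts to a transverse disc, writes out the cyclic cover $V\rightarrow X$ and a local generator of $f_{\ast}\mathcal{O}_X(\lceil -B+P+f^{\ast}B_Z+f^{\ast}M_Z\rceil)$ in terms of $\gamma_{\Sigma_i}$ and ${\rm mult}_{\Sigma_i}M_Z$, and compares it with the Hodge-theoretic generator of the canonical extension, whose growth is governed by the semisimple part of the local monodromy. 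Appealing to Remark \ref{lamatrdellaconn} does not supply this: that remark only records the shape $\Gamma=\sum U_i\,dz_i/z_i$ of the connection matrix in the unipotent case and says nothing about divisor coefficients. So the proposal, as it stands, assumes precisely what must be proved.

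Two further points. First, your intermediate step ``I would first verify that $M_Z$ is Cartier on $Z_0$'' is circular as written: integrality of $M_Z$ is part of the conclusion, and along divisors meeting $Z_0$ it requires the same local comparison, not an easier separate argument. Second, your identification of the eigensheaf with the canonical extension of $\mathcal{F}^d\cap\mathcal{H}^{(1)}$ relies on the theorem identifying $h_{\ast}\omega_{V/Z}$ with the canonical extension of the bottom Hodge piece of $R^dh_{\ast}\mathbb{C}_{V_0}$; but the eigensheaf in the statement lives inside $h_{\ast}\omega_{V/Z}(P_V)$, so when $P\neq 0$ (the lc, non-klt case) this identification fails as stated and one is forced into the variation of \emph{mixed} Hodge structures on $V_0\setminus P_V$ — exactly the extra work the paper carries out around Lemma \ref{preservs} and Proposition \ref{subsystem} in the proof of Theorem \ref{mainteolc}. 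Your argument, as written, covers only the case $P=0$.
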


\section{Bounding the denominators of the moduli part}

Conjecture \textbf{EbS} \ref{effbsemi} implies in particular the existence
of an integer $N=N(d,r)$ such that for all $f\colon (X,B)\rightarrow Z$
lc-trivial fibration with fibres of dimension $d$
and Cartier index of $(F,B|_F)$ equal to $r$
the divisor $NM_Z$ has integer coefficients.
The result was proved in ~\cite[Theorem 3.2]{Tod} when the fibre is a rational curve.
In ~\cite{EF}, by a different method, we found such integer 
which is considerably smaller than the one in ~\cite{Tod}.
For the reader convenience we present here an argument,
due to Todorov ~\cite[Theorem 3.2]{Tod},
valid in the general case.

\begin{teo}\label{bettieffcart}
There exists an integer number $m=m(b)$
such that for any klt-trivial fibration
$f\colon (X,B)\rightarrow Z$ with
$Betti_{\dim E'}(E')=b$
where $E'$ is a non-singular model of the cover of a general fibre of $f$, $E\rightarrow F$ 
associated to the unique element of $|r(K_F+B|_F) |$
the divisor $mM_Z$ has integer coefficients. 
\end{teo}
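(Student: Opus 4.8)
The plan is to reduce the computation of the denominators of $M_Z$ to controlling the order of the semisimple part of the local monodromy of an integral variation of Hodge structure of rank $b$, and then to bound that order by $m(b)={\rm lcm}\{k\,|\,\phi(k)\le b\}$ using exactly the root-of-unity counting of Corollary \ref{gabolomixed}. Since the coefficient of $M_Z$ along a prime divisor $p$ is a codimension-one datum, I would first reduce to the case $\dim Z=1$: cutting $Z$ with general hyperplane sections and using the equality $M_Z|_H=M_H$ of Lemma \ref{modulirestr} repeatedly, the coefficient of $M_Z$ along $p$ becomes the coefficient of $M_C$ at a point for a general curve $C$ meeting $p$ transversally. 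As the general fibre $F$, and hence its cover $E'$ and the number $b=Betti_{\dim E'}(E')$, are unchanged under such restrictions, it suffices to bound the denominators of $M_Z$ for $Z$ a smooth projective curve, with the same $b$.

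Next I would realize $M_Z$ Hodge-theoretically via the covering trick of Section 4.2. Form the cyclic cover $g\colon V\to X$ and $h=f\circ g\colon V\to Z$, so that $h$ and $f$ induce the same moduli divisor and the general fibre of $h$ is the desingularized cover $E'$ of $F$; thus $\mathcal{H}_{\mathbb{C}}=(R^{d}h_{\ast}\mathbb{C}_{V_0})_{prim}$ is an integral local system of rank at most $b$. By Theorem \ref{kawamio} there is a finite cover $\tau\colon Z'\to Z$ after which $R^{d}h'_{\ast}\mathbb{C}_{V'_0}$ is unipotent; Proposition \ref{mautofascio} then shows $M_{Z'}$ is integral, and Proposition \ref{basechange} gives $M_{Z'}=\tau^{\ast}M_Z$. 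Hence, if $p'\in Z'$ lies over $p$ with ramification index $e_{p'}$, the integer coefficient of $M_{Z'}$ at $p'$ equals $e_{p'}$ times the coefficient of $M_Z$ at $p$, so the denominator of the latter divides $e_{p'}$.

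Finally I would bound $e_{p'}$. Taking $\tau$ to ramify over $p$ exactly to the order needed to unipotentize the monodromy, $e_{p'}$ equals the order of the semisimple part of the local monodromy $T_p$ of $\mathcal{H}_{\mathbb{C}}$, that is the least common multiple of the orders of the eigenvalues of $T_p$. By the monodromy theorem $T_p$ is quasi-unipotent and preserves the lattice $\mathcal{H}_{\mathbb{Z}}$ of rank at most $b$, so each eigenvalue is a root of unity whose order $k$ satisfies $\phi(k)\le b$: the cyclotomic polynomial $\Phi_k$ divides the integral characteristic polynomial of $T_p$, so all $\phi(k)$ primitive $k$-th roots of unity occur among the $\le b$ eigenvalues. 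As in Corollary \ref{gabolomixed} this forces $k\mid m(b)$ for every eigenvalue, whence $e_{p'}\mid m(b)$ and therefore $m(b)M_Z$ has integer coefficients.

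The step I expect to be the main obstacle is the identification, underlying the last two paragraphs, of the ramification needed to make the monodromy unipotent around $p$ with the order of the semisimple part of $T_p$, together with the matching equality $M_{Z'}=\tau^{\ast}M_Z$ at the level of coefficients; this is precisely where Deligne's canonical extension (Proposition \ref{canext}) and the identification of $h_{\ast}\omega_{V/Z}$ with the bottom piece of the Hodge filtration must be used to translate a monodromy eigenvalue into a coefficient of $M_Z$. Once this local dictionary is in place, the remaining bound on the orders of roots of unity is the elementary Euler-function estimate already established in Corollary \ref{gabolomixed}.
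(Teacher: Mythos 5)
Your proof is correct, and it shares the paper's skeleton (reduction to $\dim Z=1$ via Lemma \ref{modulirestr}, the cyclic cover $V\to X$, Kawamata's base change, Propositions \ref{mautofascio} and \ref{basechange}, and the Euler-function bound of Corollary \ref{gabolomixed}), but the key local step is genuinely different. The paper (following Todorov) never controls the ramification indices of $\tau$: it works with an arbitrary unipotentizing covering and instead exploits its Galois structure, using Proposition \ref{azioneazione} to make the stabilizer $G_{p'}$ act through $\mu_e$ on the stalks $(h'_{\ast}\omega_{V'/Z'})_{p'}\subseteq\mathcal{H}'_{p'}$, combining this with the $\mu_r$-action into a $\mu_l$-action ($l=er/(e,r)$) which, since $\mathcal{O}_{Z'}(M_{Z'})$ is a $\mu_r$-eigensheaf, acts on the stalk of $M_{Z'}$ by a character $\chi_{p'}$; the denominator of $\operatorname{mult}_p M_Z$ is the order of $\chi_{p'}$, bounded because the conjugate characters all occur inside $\mathcal{H}'_{p'}$, of dimension $b$, and descent is then expressed as $NM_Z=(\tau_{\ast}\mathcal{O}(NM_{Z'}))^G$. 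You instead bound the ramification index $e_{p'}$ itself and read the denominator off the identity $\operatorname{mult}_{p'}M_{Z'}=e_{p'}\operatorname{mult}_p M_Z$; this is more elementary (no equivariant stalk analysis) and gives the sharper local statement that the denominator at $p$ divides the order of the semisimple part of $T_p$. The one point where you need more than the paper's toolkit as stated: Theorem \ref{kawamio} only asserts the existence of some unipotentizing covering built from cyclic covers, with no control on its ramification over $\Sigma_Z$, so your phrase ``taking $\tau$ to ramify over $p$ exactly to the order needed'' requires the full strength of Kawamata's covering theorem ([Kaw81], Theorem 17), which does allow one to prescribe the ramification index along each component of a given snc divisor. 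Alternatively you can sidestep minimality altogether: your eigenvalue argument shows $T_p^{m(b)}$ is unipotent for every $p$ (the rational characteristic polynomial of $T_p$ on the rank-$b$ local system is divisible by the cyclotomic polynomial of each eigenvalue), so prescribing the uniform ramification $d_p=m(b)$ already yields unipotent monodromies and $e_{p'}\mid m(b)$. Note also that Proposition \ref{mautofascio} requires unipotence of the full $R^d h'_{\ast}\mathbb{C}_{V'_0}$, not just its primitive part, so run the monodromy argument on the full local system (still of rank $b$); and the ``local dictionary'' you flag as the main obstacle is in fact already packaged in the cited Propositions \ref{mautofascio} and \ref{basechange} --- the genuinely delicate point is the prescribed ramification just discussed.
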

We begin by reducing the problem to the case where the base $Z$ is a curve.

\begin{prop}\label{indcart}
If Theorem \ref{bettieffcart} holds for fibrations whose bases have dimension one
then Theorem \ref{bettieffcart} holds for fibrations whose bases have dimension $k\geq 1$.
\end{prop}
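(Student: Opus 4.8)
The plan is to argue by induction on $k=\dim Z$, exactly in the spirit of Proposition \ref{codim} and of the proof of Theorem \ref{semicurvesemitutto}, with Lemma \ref{modulirestr} as the essential tool. The base case $k=1$ is precisely the hypothesis. For the inductive step I would assume the statement for fibrations whose base has dimension $k-1$ and treat a klt-trivial fibration $f\colon(X,B)\to Z$ with $\dim Z=k\ge 2$ and $Betti_{\dim E'}(E')=b$. The key observation is that the integrality of $mM_Z$ is a condition on the coefficient of $M_Z$ along each of the finitely many prime divisors in its support, and each such coefficient can be recovered after restricting to a sufficiently general hyperplane section.

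Concretely, I would choose a hyperplane section $H\subseteq Z$ as in Lemma \ref{modulirestr}, so that the restricted fibration $f_H\colon (X_H,B_{X_H})\to H$ is again lc-trivial and $M_Z|_H=M_H$. Taking $H$ general by Bertini, $H$ is smooth, meets each prime divisor $D$ of the support of $M_Z$ generically and transversally in a prime divisor $D\cap H$ of $H$, and avoids the pairwise intersections of these divisors; consequently the coefficient of $M_Z|_H=M_H$ along $D\cap H$ equals the coefficient of $M_Z$ along $D$. Since the fibres of $f_H$ over points of $H$ are fibres of $f$, the general fibre of $f_H$ is the general fibre $F$ of $f$, so the associated cover $E\to F$ and its nonsingular model $E'$ are unchanged and $Betti_{\dim E'}(E')=b$ holds for $f_H$ as well; moreover $(X_H,B_{X_H})$ is klt over the generic point of $H$ for general $H$, so $f_H$ is genuinely klt-trivial.

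Applying the inductive hypothesis to $f_H$, whose base has dimension $k-1$ and whose fibre invariant is the same $b$, yields that $mM_H$ is integral for $m=m(b)$; crucially this integer depends only on $b$ and not on the dimension of the base, so it is the same $m$ throughout the induction. Therefore, for every prime divisor $D$ in the support of $M_Z$, the coefficient of $M_Z$ along $D$ coincides with the coefficient of $M_H$ along $D\cap H$ and hence lies in $\frac{1}{m}\mathbb{Z}$. Thus $mM_Z$ has integer coefficients, completing the induction.

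The only genuine point to verify carefully is the genericity of $H$: one must arrange simultaneously that $M_Z|_H=M_H$ (Lemma \ref{modulirestr}), that $H$ meets each prime component of $\mathrm{Supp}\,M_Z$ generically so that the coefficients are preserved under restriction, and that both the klt condition and the fibre invariant $b$ descend to $f_H$. All of these are controlled by choosing $H$ in the dense open locus furnished by Bertini, together with the finiteness of $\mathrm{Supp}\,M_Z$ and of the exceptional set $\mathcal{S}$ appearing in Lemma \ref{modulirestr}.
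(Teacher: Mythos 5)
Your proposal is correct and follows essentially the same route as the paper: induction on $\dim Z$, restriction to a general hyperplane section $H$ furnished by Lemma \ref{modulirestr} so that $M_Z|_H=M_H$, transversality of $H$ with the components of $\operatorname{Supp} M_Z$ to see that the coefficients of $M_Z$ and $M_H$ agree, and then the inductive hypothesis applied to $f_H$ (whose fibre invariant $b$ is unchanged). The paper states this more tersely, while you spell out the coefficient-preservation, the descent of the klt condition and of $b$ to $f_H$, and the uniformity of $m$ in the induction, all of which are implicit in the paper's argument.
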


\begin{proof}
We prove the statement by induction on $k=\dim Z$.
If $k=1$ then it follows from the hypothesis.
Assume the statement holds for fibrations over bases of dimension $k-1$
and we consider $f\colon X\rightarrow Z$ with $\dim Z=k$.
Let $H$ be a hyperplane section of $Z$ as in Lemma \ref{modulirestr}.
We have thus $M_Z|_H=M_H$. Since $H$ is ample, it meets each component of $M_Z$
and we can choose it such that it meets transversally the components of $M_Z$.
It follows that $NM_Z$ has integer coefficients if and only if $NM_H$ does,
and we are done by inductive hypothesis.
\end{proof}

\begin{proof}[Proof of Theorem \ref{bettieffcart}]
By Proposition \ref{indcart} we can assume that $\dim Z=1$.\\
Consider a finite base change as in Theorem \ref{kawamio}
$$\tau\colon Z'\rightarrow Z.$$
If $h'\colon V'\rightarrow Z'$ is the induced morphism
then $R^d h'_{\ast}\mathbb{C}_{V'_0}$ has unipotent monodromies.
The covering $\tau$ is Galois and let $G$ be its Galois group.
Then we have an action of $G$ on $Z'$
\begin{eqnarray}\label{azaz}
G&\rightarrow& {\rm Bir}(Z')={\rm Aut}(Z').
\end{eqnarray}
By abuse of notation we will denote as $g$ both an element of $G$
and its image in ${\rm Aut}(Z')$.

Let $p'\in Z'$ be a point and
let $e$ be the ramification order of $\tau$ at $p'$.
Let $G_{p'}$ be the stabilizer of $p'$ with respect to the action (\ref{azaz}).
Set $\mu_e=\{x\in\mathbb{C}\,|\,x^e=1\}$.\\
There exists an analytic open set $p'\in U\subseteq Z'$ and a local coordinate $z$ on $U$
centered in $p'$ such that for any $g\in G_{p'}$ there exists $x \in \mu_e$ such that 
$$
\begin{array}{lrcc}
g|_U\colon &U&\longrightarrow &U\\
&z&\longmapsto &xz.
\end{array}
$$
This induces a natural homomorphism $$G_{p'}\rightarrow \mu_e.$$
Then the actions of $G_{p'}$ given by Proposition \ref{azioneazione}
factorize through actions of $\mu_e$:
$$\Phi\colon\mu_e\rightarrow GL((R^d h'_{\ast}\mathbb{C}_{V'})_{p'}),$$
$$\Psi\colon\mu_e\rightarrow GL((h'_{\ast}\omega_{V'/Z'})_{p'})$$
that commute with the inclusion $(h'_{\ast}\omega_{V'/Z'})_{p'}\subseteq (R^d h'_{\ast}\mathbb{C}_{V'})_{p'}$,
that is such that for all $\zeta\in\mu_e$ the restriction of $\Phi(\zeta)$
to $(h'_{\ast}\omega_{V'/Z'})_{p'}$ equals $\Psi(\zeta)$.\\
Thus on $$(h'_{\ast}\omega_{V'/Z'})_{p'}=\bigoplus_{i=0}^{r-1}f_{\ast}\mathcal{O}_X(\lceil(1-i)K_{X/Z}-iB+if^{\ast}B_Z+if^{\ast}M_Z\rceil)$$ we have two actions:
\begin{itemize}
\item one by the group $\mu_e$ that acts on $\varphi$ by a multiplication by an $e$-th rooth of unity,
\item one by the group $\mu_r$ that acts on $\sqrt[r]\varphi$ by a multiplication by an $r$-th rooth of unity.
\end{itemize}
Then there is a $\mu_r\rtimes\mu_e$-action on $(h'_{\ast}\omega_{V'/Z'})_{p'}$
and we can define a $\mu_l$-action on $(h'_{\ast}\omega_{V'/Z'})_{p'}$
where $l=er/(e,r)$.
Since $\mu_r\subseteq\mu_l$ and this second group is commutative,
the action of $\mu_l$ preserves the eigensheaves with respect to the action of $\mu_r$.
By Proposition \ref{mautofascio}, the divisor $M_{Z'}$ is an eigensheaf with respect to the action of $\mu_r$.
Then $\mu_l$ acts on the stalk $\mathcal{O}_{Z'}(M_{Z'})\otimes\mathbb{C}_{p'}$
by a character $\chi_{p'}$.

If for every $p'$ and for every character $\chi_{p'}$
the order of $\chi_{p'}$ divides an integer $N$ then
$$NM_Z=(\tau_{\ast}\mathcal{O}(NM_{Z'}))^G$$
because by Proposition \ref{basechange} we have $M_{Z'}=\tau^{\ast}M_Z$.
Thus $NM_Z$ is a Cartier divisor. 

Let $\mathcal{H}'$ be the canonical extension of the sheaf
$(R^d(h'_0)_{\ast}\mathbb{C})_{prim}\otimes \mathcal{O}_{Z'_0}$ to $Z'$,
where $d$ is the dimension of the fibre of $h'$
and $h'_0\colon V'_0\rightarrow Z'_0$
is the restriction to the smooth locus.
The Hodge filtration also extends and its bottom piece is $h'_{\ast}\mathcal{O}_{V'}(K_{V'/Z'})$.
Then
all the characters that are conjugated to $\chi_{p'}$
must appear as subrepresentations of $\mathcal{H}'_{p'}$ (see also ~\cite[Corollary 4.2.8(ii)(b)]{DeligneII} or
Corollary \ref{gabolomixed}).

If $\chi_{p'}$ acts by a primitive $k$-th root of unity,
then its conjugated subrepresentations are $\phi(k)$
where $\phi$ is the Euler function. 
This bounds $k$ because
then $\phi(l)\leq B_d$, where $B_d=h^d(E',\mathbb{C})$ 
is the $d$-th Betti number.\\
Set $m(x)={\rm lcm}\{ k\,|\,\phi(k)\leq x\}$.
Then $m(B_d)M_Z$ has integer coefficients.
\end{proof}

\section{The case $M_Z\equiv 0$}
\subsection{KLT-trivial fibrations with numerically trivial moduli part.}

The goal of this subsection is the proof of Theorem \ref{mainteo}.
As in Theorem \ref{semicurvesemitutto} the problem can be reduced to the case where the base is a curve.
\begin{prop}\label{trivcurvetrivtuttoklt}
Assume that there exists an integer number $m=m(b)$
such that for any klt-trivial fibration
$f\colon (X,B)\rightarrow Z$ with
\begin{itemize}
\item $\dim Z=1$
\item $M_Z\equiv 0$;
\item $Betti_{\dim E'}(E')=b$
where $E'$ is a non-singular model of the cover $E\rightarrow F$ 
associated to the unique element of $|r(K_F+B|_F) |$;
\end{itemize}
we have $mM_Z\sim\mathcal{O}_Z$.

Then the same holds for bases $Z$ of arbitrary dimension.
\end{prop}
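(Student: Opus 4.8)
The plan is to run the same induction on $k=\dim Z$ as in the torsion case of Theorem \ref{semicurvesemitutto}, using the numerical triviality of $M_Z$ to upgrade effective non-vanishing to linear triviality. We may assume $Z$ is a smooth projective variety: by Theorem \ref{nefness} we can replace $Z$ by the model on which $M_{Z'}=\nu^{\ast}M_Z$ is nef and well defined, prove the statement there, and push the resulting linear equivalence back down along $\nu$. The base case $k=1$ is exactly the hypothesis. Throughout we use that $mM_Z$ is an integral (hence Cartier) divisor in every dimension, for the value $m=m(b)$; this is guaranteed by Theorem \ref{bettieffcart} together with Proposition \ref{indcart}, so that the sheaf $\mathcal{O}_Z(mM_Z)$ and Kodaira vanishing are available.

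For the inductive step, suppose the statement holds over bases of dimension $k-1$ and let $f\colon (X,B)\rightarrow Z$ be a klt-trivial fibration with $\dim Z=k>1$, $M_Z\equiv 0$ and middle Betti number $b$. First I would choose a hyperplane section $H\subseteq Z$ as in Lemma \ref{modulirestr}, so that $M_Z|_H=M_H$, and take $H$ sufficiently ample that $H-mM_Z$ is ample; this is possible because $mM_Z\equiv 0$ and ampleness is a numerical property. The restricted fibration $f_H\colon X_H\rightarrow H$ is again klt-trivial, its general fibre coincides with the general fibre $F$ of $f$, so the associated cover $E\rightarrow F$ and the invariant $b$ are unchanged, and $M_H=M_Z|_H\equiv 0$ since the restriction of a numerically trivial divisor to a subvariety stays numerically trivial. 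Hence the inductive hypothesis applies to $f_H$ and gives $mM_H\sim\mathcal{O}_H$, in particular $h^0(H,mM_H)=1$.

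It remains to transport this back to $Z$. Consider the restriction exact sequence
\begin{equation*}
0\rightarrow \mathcal{O}_Z(mM_Z-H)\rightarrow \mathcal{O}_Z(mM_Z)\rightarrow \mathcal{O}_H(mM_Z|_H)\rightarrow 0.
\end{equation*}
Since $H-mM_Z$ is ample and $\dim Z=k\geq 2$, Kodaira vanishing (in the form $H^q(Z,L^{-1})=0$ for $q<\dim Z$ and $L$ ample, applied to $L=H-mM_Z$) gives $h^0(Z,mM_Z-H)=h^1(Z,mM_Z-H)=0$. Therefore the restriction map induces an isomorphism $H^0(Z,mM_Z)\cong H^0(H,mM_Z|_H)=H^0(H,mM_H)$, whence $h^0(Z,mM_Z)=1$. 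Finally I would combine this with $mM_Z\equiv 0$: the nonzero section produces an effective divisor $D\sim mM_Z$ with $D\equiv 0$, and an effective numerically trivial divisor is zero, so $mM_Z\sim\mathcal{O}_Z$.

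The argument is structurally identical to the torsion case of Theorem \ref{semicurvesemitutto}; the only points requiring care are the verification that the hypotheses of the proposition are inherited by $f_H$ with the same $b$ --- which rests on the general fibre being preserved by a generic hyperplane section --- and the integrality of $mM_Z$ needed to invoke Kodaira vanishing, which is supplied by Theorem \ref{bettieffcart}. I expect the main (though mild) obstacle to be confirming that $f_H$ is genuinely klt-trivial with numerically trivial moduli part, i.e.\ that the discriminant and moduli behave as in Lemma \ref{modulirestr} also under the klt hypothesis, rather than any deep new input.
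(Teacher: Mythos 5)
Your proof is correct and follows essentially the same route as the paper's: induction on $\dim Z$, restriction to a hyperplane section $H$ as in Lemma \ref{modulirestr} with $H-mM_Z$ ample, and the restriction exact sequence plus Kodaira vanishing to identify $H^0(Z,mM_Z)$ with $H^0(H,mM_H)\cong\mathbb{C}$. The additional care you take --- securing integrality of $mM_Z$ via Theorem \ref{bettieffcart} and Proposition \ref{indcart}, checking that $f_H$ inherits the hypotheses with the same $b$, and spelling out that an effective numerically trivial divisor must vanish --- only makes explicit what the paper's proof leaves implicit.
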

\begin{proof}
We procede by induction on $k=\dim Z$.
The base of induction is the hypothesis of the theorem.\\
Let us assume the statement holds for bases of dimension $k-1$
and prove it for a klt-fibration $f\colon (X,B)\rightarrow Z$
with $\dim Z=k$.
Let $H$ be a hyperplane section, given by Lemma \ref{modulirestr},
such that $M_Z|_H=M_H$.
Let $m$ be the integer given by the inductive hypothesis.
Since $M_Z\equiv 0$ the divisor $H-mM_Z$ is ample.
By taking the long exact sequence associated to
$$0\rightarrow \mathcal{O}_Z(mM_Z-H)\rightarrow \mathcal{O}_Z(mM_Z)\rightarrow \mathcal{O}_H(mM_Z|_H)\rightarrow0$$
we obtain $H^0(Z,mM_Z)\cong H^0(H,mM_H)$
because $H^i(Z,mM_Z-H)=0$ for all $i<\dim Z$.
Then $H^0(Z,mM_Z)\cong\mathbb{C}$, that implies $mM_Z\sim\mathcal{O}_Z$.
\end{proof}

\begin{proof}[Proof of Theorem \ref{mainteo}]
By Proposition \ref{trivcurvetrivtuttoklt}
applied in the case of a klt-trivial fibration,
with $b=h^d(E',\mathbb{C})$,
it is sufficient to prove the statement when the base $Z$ is a curve.

Let us write the canonical bundle formula for $f$:
$$K_X+B+\frac{1}{r}(\varphi)=f^{\ast}(K_Z+B_Z+M_Z).$$
Let $V$ be a nonsingular model of the normalization of $X$ in $\mathbb{C}(X)(\sqrt[r]{\varphi})$.\\
\begin{description}
\item[(i)]\underline{Let us suppose that $R^d h_{\ast}\mathbb{C}_{V_0}$ has unipotent monodromies}.
We argue as in ~\cite{Amb04}, Theorems 4.5 and 0.1.\\
The divisor $M_Z$ is a direct summand in $h_{\ast}\omega_{V/Z}$ (see ~\cite[Lemma 5.2]{Amb04})
and since ${\rm deg}M_Z=0$ by ~\cite{Takao} it defines a local complex subsystem of the variation of Hodge structure
$R^d h_{\ast}\mathbb{C}$.
By Corollary \ref{gabolomixed} there exists $m$ such that $mM_Z\cong\mathcal{O}_Z$
where $$m=m(b)={\rm lcm}\{k|\phi(k)\leq b\}$$
with $\phi$ the Euler function
and $b=h^d(E,\mathbb{C})$.\\
\item[(ii)]\underline{Unipotent reduction}:
Consider a finite base change as in Proposition \ref{kawamio}
$$\tau\colon Z'\rightarrow Z$$
such that $R^d h'_{\ast}\mathbb{C}_{V'_0}$ has unipotent monodromies,
where $h'\colon V'\rightarrow Z'$ is the induced morphism.
We have $\tau=\tau_k\circ\ldots\tau_1$ where
$$\tau\colon Z'=Z_{k+1}\xrightarrow{\tau_k} Z_k\ldots Z_2\xrightarrow{\tau_1} Z_1=Z.$$
The morphism $\tau_j$ is a cyclic covering defined by the relation
$$A_j^{\otimes\delta_j}\sim H_j,$$
where $A_j$ is a very ample divisor on $Z_j$.
We know by case \textbf{(i)} that $m(b)M_{Z'}\sim\mathcal{O}_{Z'}$.
By Theorem \ref{bettieffcart} $m(b)M_{Z_k}$ is a Cartier divisor.
We have thus the following isomorphisms:
$$\mathbb{C}\cong H^0(Z_{k+1},m(b)M_{Z_{k+1}})\cong H^0(Z_{k+1},\tau^{\ast}m(b)M_{Z_{k}})\cong H^0(Z_k,m(b)M_{Z_{k}}\otimes \tau_{\ast}\mathcal{O}_{Z_{k+1}}).$$
The second isomorphism is by Proposition \ref{basechange}
and the third follows from the projection formula.
From the general theory about cyclic covers we have the isomorphism
$$\tau_{\ast}\mathcal{O}_{Z_{k+1}}\cong \bigoplus_{l=0}^{\delta_k-1} A_k^{-l}.$$
Then we obtain $$H^0(Z_{k+1},m(b)M_{Z_{k+1}})\cong\bigoplus_{l=0}^{\delta_k-1} H^0(Z_k,m(b)M_{Z_{k}}\otimes A_k^{-l}).$$
Since $M_Z\equiv 0$, the divisor $m(b)M_{Z_{k}}\otimes A_k^{-l}$ has negative degree on $Z_k$ for all $l<0$,
thus $$\mathbb{C}\cong H^0(Z_{k+1},m(b)M_{Z_{k+1}})\cong H^0(Z_k,m(b)M_{Z_{k}})$$
and $m(b)M_{Z_{k}}\sim \mathcal{O}_Z.$
We can conclude by induction on $k$.
\end{description}
\end{proof}

\begin{oss}
{\rm
Note that the same proof as in point \textbf{(ii)} of the proof
of Theorem \ref{mainteo} implies a statement on \textbf{Effective non-vanishing} (see also Remark \ref{effeff}).
More precisely assume that $\deg M_Z>0$ and $\tau\colon Z'\rightarrow Z$
is as in Proposition \ref{kawamio}.
Assume that $H^0(Z',m(b)M_{Z'})\neq 0$.
Then the reasoning above implies that $H^0(Z,m(b)M_Z)\neq 0$.
}
\end{oss}

\subsection{LC-trivial fibrations with numerically trivial moduli part.}
In this section we prove Theorem \ref{mainteolc}.
To do so, we prove that $M_Z$ is a subsystem of a variation of Hodge structure
related to the variation of mixed Hodge structure on $R^d\mathbb{C}_{V_0\backslash P_V/Z_0}$.
We start with the following two results.
\begin{prop}\label{trivcurvetrivtuttolc}
Assume that for any lc-trivial fibration
$f\colon (X,B)\rightarrow Z$ with
\begin{itemize}
\item $\dim Z=1$
\item $M_Z\equiv 0$;
\end{itemize}
there exists an integer number $m$
such that $mM_Z\sim\mathcal{O}_Z$.

Then the same holds for bases $Z$ of arbitrary dimension.
\end{prop}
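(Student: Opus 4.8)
The plan is to follow the inductive strategy of Proposition \ref{trivcurvetrivtuttoklt} almost verbatim, arguing by induction on $k=\dim Z$. The base case $k=1$ is exactly the hypothesis of the proposition. So I would assume the conclusion for all lc-trivial fibrations with $M_Z\equiv 0$ over bases of dimension $k-1$, and take an lc-trivial fibration $f\colon(X,B)\to Z$ with $\dim Z=k>1$ and $M_Z\equiv 0$, aiming to produce an integer $m$ with $mM_Z\sim\mathcal{O}_Z$.

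First I would choose a hyperplane section $H\subseteq Z$ as in Lemma \ref{modulirestr}, so that the restricted fibration $f_H\colon X_H\to H$ is again lc-trivial and $M_Z|_H=M_H$; following Proposition \ref{indcart}, I would additionally arrange that $H$ meets each component of $M_Z$ transversally. Since numerical triviality restricts, $M_H=M_Z|_H\equiv 0$, so the inductive hypothesis applies to $f_H$ and furnishes an integer $m$ with $mM_H\sim\mathcal{O}_H$. In particular $mM_H$ has integer coefficients, and by the transversality of $H$ (exactly as in the proof of Proposition \ref{indcart}) the same holds for $mM_Z$, which is therefore a genuine Cartier divisor on the smooth variety $Z$.

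Then I would run the Kodaira vanishing argument. As $M_Z\equiv 0$, the divisor $H-mM_Z$ is ample, so by the Kodaira vanishing theorem $H^i(Z,mM_Z-H)=0$ for all $i<\dim Z=k$. Feeding this into the long exact sequence associated to
$$0\to\mathcal{O}_Z(mM_Z-H)\to\mathcal{O}_Z(mM_Z)\to\mathcal{O}_H(mM_H)\to 0$$
and using $k>1$ to annihilate the relevant $H^1$ term yields an isomorphism $H^0(Z,mM_Z)\cong H^0(H,mM_H)\cong\mathbb{C}$, the last equality because $mM_H\sim\mathcal{O}_H$ and $H$ is connected. Finally $mM_Z$ is numerically trivial and admits a nonzero section, so it is linearly equivalent to an effective divisor, which must vanish since an effective numerically trivial divisor is zero; hence $mM_Z\sim\mathcal{O}_Z$, closing the induction.

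The one point demanding genuine care, and the only real difference from Proposition \ref{trivcurvetrivtuttoklt}, is that the integer $m$ is \emph{not} uniform here: the hypothesis only provides, for each curve-based fibration, some torsion order, and the induction merely propagates the existence of such an $m$. Consequently I cannot invoke an a priori integrality statement such as Theorem \ref{bettieffcart}, and I must instead establish the Cartier property of $mM_Z$ directly, which is precisely why the transversality choice of $H$ is built into the argument. All the remaining ingredients, namely Lemma \ref{modulirestr}, the preservation of numerical triviality under restriction, and the vanishing theorem, are insensitive to the distinction between the klt and lc settings, so no further modification is required.
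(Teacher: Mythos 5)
Your proposal is correct and takes essentially the same approach as the paper, whose entire proof of Proposition \ref{trivcurvetrivtuttolc} is the remark that it ``follows the same lines'' as Proposition \ref{trivcurvetrivtuttoklt}: induction on $\dim Z$, a hyperplane section $H$ from Lemma \ref{modulirestr} with $M_Z|_H=M_H\equiv 0$, and the Kodaira vanishing argument on $0\to\mathcal{O}_Z(mM_Z-H)\to\mathcal{O}_Z(mM_Z)\to\mathcal{O}_H(mM_H)\to 0$ giving $H^0(Z,mM_Z)\cong\mathbb{C}$. Your extra step securing the integrality of $mM_Z$ via a transversal choice of $H$ (as in Proposition \ref{indcart}) addresses a point the paper leaves implicit, and is indeed needed here since the non-uniform $m$ of the lc setting cannot be handled by an a priori bound such as Theorem \ref{bettieffcart}.
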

\begin{proof}
The proof follows the same lines as the proof of Proposition \ref{trivcurvetrivtuttoklt}.
\end{proof}
\begin{lem}[Lemma 21, ~\cite{Kaw81}]\label{kawa21}
Let $\mathcal{L}$ be an invertible sheaf over a non-singular
projective curve $C$, let $C_0$ be an open subset of $C$
and let $h$ be a metric on $\mathcal{L}|_{C_0}$.
Let $p$ be a point of $C\backslash C_0$
and let $t$ be a local parameter of $C$ centered at $p$.
We assume that for a uniformizing section $v$ of $\mathcal{L}$
we have $h(v,v)=O(t^{-2\alpha_p}|\log t|^{\beta_p})$
for some real numbers $\alpha_p,\beta_p$.
Then $$\deg_C\mathcal{L}=\frac{i}{2\pi}\int_{C_0}{\Theta}+\sum _{p\in C\backslash C_0}\alpha_p$$
where $\Theta$ is the curvature associated to $h$.
\end{lem}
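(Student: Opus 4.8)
The plan is to deduce the formula from the classical Chern--Weil description of the degree, correcting the smooth computation by the boundary contributions produced by the degeneration of $h$ at the finitely many points of $C\setminus C_0$. First I would fix once and for all a smooth Hermitian metric $h_0$ on the whole of $\mathcal{L}$, and normalise the curvature sign so that $\Theta=-\partial\bar\partial\log h(v,v)$ and $\Theta_0=-\partial\bar\partial\log h_0(v,v)$ for a local holomorphic frame $v$; the Chern--Weil theorem then gives $\deg_C\mathcal{L}=\tfrac{i}{2\pi}\int_C\Theta_0$, the integral being over the smooth compact curve. On $C_0$ the positive function $\rho=h(v,v)/h_0(v,v)$ is independent of the frame $v$, and comparing the two metrics yields $\Theta=\Theta_0-\partial\bar\partial\log\rho$. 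Granting the convergence discussed below, this reduces the statement to the identity
\begin{equation*}
\frac{i}{2\pi}\int_{C_0}\partial\bar\partial\log\rho=\sum_{p\in C\setminus C_0}\alpha_p .
\end{equation*}

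Next I would localise the left-hand side at the punctures. Excise a small coordinate disc $D_\epsilon(p)=\{|t|<\epsilon\}$ around each $p\in C\setminus C_0$ and set $C_\epsilon=C_0\setminus\bigcup_p\overline{D_\epsilon(p)}$. Since $\partial\bar\partial\log\rho=d(\bar\partial\log\rho)$ and $\log\rho$ is smooth on $C_\epsilon$, Stokes' theorem gives
\begin{equation*}
\int_{C_\epsilon}\partial\bar\partial\log\rho=-\sum_{p}\int_{\partial D_\epsilon(p)}\bar\partial\log\rho ,
\end{equation*}
the sign arising because the circles $\partial D_\epsilon(p)$, oriented counterclockwise, occur in $\partial C_\epsilon$ with reversed orientation. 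Everything then comes down to evaluating each circle integral as $\epsilon\to0$.

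This is where the growth hypothesis is used. Near $p$ one has $\log\rho=\log h(v,v)-\log h_0(v,v)=-2\alpha_p\log|t|+\beta_p\log|\log t|+O(1)$, the last two terms being a slowly varying and a smooth correction. Writing the leading term as $-\alpha_p(\log t+\log\bar t)$ and using $\bar\partial(\log t+\log\bar t)=d\bar t/\bar t$, the parametrisation $t=\epsilon e^{i\theta}$ gives $\int_{\partial D_\epsilon(p)}\bar\partial\bigl(-\alpha_p(\log t+\log\bar t)\bigr)=2\pi i\,\alpha_p$, independently of $\epsilon$. The $\beta_p$–term contributes $O(1/\log\epsilon)$ and the $O(1)$ remainder contributes $O(\epsilon)$, so both die in the limit. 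Hence $\lim_{\epsilon\to0}\int_{C_\epsilon}\partial\bar\partial\log\rho=-2\pi i\sum_p\alpha_p$, and multiplying by $i/2\pi$ produces exactly $\sum_p\alpha_p$; together with $\tfrac{i}{2\pi}\int_{C_\epsilon}\Theta_0\to\deg_C\mathcal{L}$ this yields the claimed formula.

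The main obstacle is analytic rather than formal: one must justify that the improper integral $\int_{C_0}\Theta$ converges and that the limits above are legitimate. The point is that the growth condition is precisely what makes $\partial\bar\partial\log\rho$ integrable near each $p$: the harmonic part $\log|t|$ is annihilated by $\partial\bar\partial$ away from the origin, and the surviving $\log|\log t|$ contribution has $\partial\bar\partial$ of integrable size, so $\int_{C_\epsilon}\Theta$ has a finite limit as $\epsilon\to0$, which is the meaning of $\int_{C_0}\Theta$. The remaining care is bookkeeping: keeping the orientation of the excised circles consistent and checking that the subleading $\beta_p$ and $O(1)$ boundary terms vanish uniformly, so that only the residue $2\pi i\,\alpha_p$ of the leading logarithmic singularity survives.
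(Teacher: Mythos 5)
The paper gives no proof of this statement: it is quoted as Lemma 21 of Kawamata \cite{Kaw81}, so there is no internal argument to compare yours with, and your attempt has to stand on its own. Its skeleton is the natural one and is correct as far as it goes: comparing $h$ with a smooth background metric $h_0$, writing $\Theta=\Theta_0-\partial\bar\partial\log\rho$ with $\rho=h(v,v)/h_0(v,v)$, reducing to $\frac{i}{2\pi}\int_{C_0}\partial\bar\partial\log\rho=\sum_p\alpha_p$, and evaluating the leading singularity via Stokes; the computation $\int_{\partial D_\epsilon}\bar\partial\bigl(-\alpha_p(\log t+\log\bar t)\bigr)=2\pi i\,\alpha_p$ is right.

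The gap is in the treatment of the error terms, and it is genuine. The hypothesis bounds the \emph{size} of $h(v,v)$, hence of the function $\log\rho$; it gives no control whatsoever on its derivatives. Your two key estimates --- that the $O(1)$ remainder contributes $O(\epsilon)$ to $\int_{\partial D_\epsilon(p)}\bar\partial\log\rho$, and that $\int_{C_0}\Theta$ converges because the remainder ``has $\partial\bar\partial$ of integrable size'' --- amount to differentiating an asymptotic expansion term by term, which is not legitimate. For example $u=\sin(1/|t|)$ is bounded near $p$, yet $\int_{\partial D_\epsilon}\bar\partial u=-\pi i\,\epsilon\,u'(\epsilon)=\pi i\cos(1/\epsilon)/\epsilon$ blows up as $\epsilon\to 0$; a remainder of this kind destroys both your limit of circle integrals and the convergence of the improper integral. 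So the central limit $\int_{\partial D_\epsilon(p)}\bar\partial\log\rho\to 2\pi i\,\alpha_p$ is not established from the stated hypothesis. (A related point: read literally, a one-sided $O$-bound cannot yield an equality, since upper bounds are not unique; the hypothesis must be taken as a two-sided asymptotic, as you implicitly do, and integrability --- or semipositivity --- of $\Theta$ near the punctures must be part of the assumptions for the right-hand side to make sense.)

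The standard way to close the gap uses only function-level bounds. Set $m(r)=\frac{1}{2\pi}\int_0^{2\pi}\log\rho(re^{i\theta})\,d\theta$. Green's formula on annuli gives $2\pi r\,m'(r)=c_R-\int_{r<|t|<R}\Delta\log\rho\,dA$; once $\Delta\log\rho$ is integrable near $p$ (assumed, or forced by a sign condition on the curvature), $r\,m'(r)$ has a limit $c/2\pi$, hence $m(r)=\frac{c}{2\pi}\log r+o(\log r)$. The growth hypothesis gives $m(r)=-2\alpha_p\log r+o(\log r)$, so $c=-4\pi\alpha_p$, and unwinding $\int\Delta\log\rho\,dA=c_R-c$ yields exactly the local contribution $\alpha_p$ without ever differentiating the expansion. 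Alternatively, in the way the lemma is actually used in this paper (Proposition \ref{subsystem}), the metric comes from a variation of Hodge structure and Remark \ref{lamatrdellaconn} supplies the missing derivative control (logarithmic poles of the connection matrix); with that extra input your Stokes argument can be justified, but as written it does not follow from the hypotheses of the lemma alone.
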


The following is a generalization of ~\cite[Prop 3.4]{Amb05}
\begin{prop}\label{subsystem}
Let $h\colon V\rightarrow Z$ be a fibration
and let $\Sigma_Z$ be a simple normal crossing divisor such that
\begin{itemize}
\item $h$ is smooth over $Z_0=Z\backslash\Sigma_Z$,
\item $\mathcal{W}_l/\mathcal{W}_{l-1}$ has unipotent monodromies,
where $\{\mathcal{W}_k\}$ is the weight filtration.
\end{itemize}
Let $\mathcal{L}$ be an invertible sheaf such that $\mathcal{L}|_{Z_0}$ 
is a direct summand of $\mathcal{W}_l/\mathcal{W}_{l-1}$
for some $l$.
Assume that $\mathcal{L}\equiv 0$.
Then $\mathcal{L}|_{Z_0}$ is a local subsystem of $\mathcal{W}_l/\mathcal{W}_{l-1}$.
\end{prop}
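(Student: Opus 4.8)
The plan is to show that the invertible sheaf $\mathcal{L}|_{Z_0}$, which is a \emph{flat} holomorphic subbundle (rank one, degree zero) of the variation of Hodge structure $\mathcal{W}_l/\mathcal{W}_{l-1}$, is in fact preserved by the flat connection, i.e.\ that it is a local subsystem and not merely a holomorphic sub-line-bundle. Since $\mathcal{W}_l/\mathcal{W}_{l-1}$ carries a polarizable variation of Hodge structure of weight $l$ by hypothesis, the strategy mirrors \cite[Prop.~3.4]{Amb05}: the obstruction to $\mathcal{L}|_{Z_0}$ being flat is measured by the second fundamental form (the off-diagonal block of the Gauss--Manin connection), and a curvature/degree computation will force this obstruction to vanish. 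First I would reduce, as in the proof of Theorem~\ref{bettieffcart} and Theorem~\ref{mainteo}, to the case where $Z$ is a non-singular projective curve $C$: restricting to a general complete intersection curve preserves the hypotheses (smoothness of $h$ over the complement of $\Sigma_Z$, unipotency of the monodromies of $\mathcal{W}_l/\mathcal{W}_{l-1}$, and $\mathcal{L}\equiv 0$), and flatness of a subsheaf can be tested on a general curve through each point.

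On the curve $C$, the key step is a degree computation via the Hodge metric. The polarization on $\mathcal{W}_l/\mathcal{W}_{l-1}$ induces a Hodge metric $h$ on the direct summand $\mathcal{L}|_{C_0}$, and I would apply Lemma~\ref{kawa21} to compute $\deg_C \mathcal{L}$ as
\begin{equation*}
\deg_C\mathcal{L}=\frac{i}{2\pi}\int_{C_0}\Theta+\sum_{p\in C\backslash C_0}\alpha_p,
\end{equation*}
where $\Theta$ is the curvature of $h$ and the $\alpha_p$ record the growth of the metric near the punctures. Because the monodromies of $\mathcal{W}_l/\mathcal{W}_{l-1}$ are unipotent, the nilpotent orbit theorem (via the canonical extension of Proposition~\ref{canext}) controls the asymptotics: the Hodge norm of a generating section of $\mathcal{L}$ grows like $|t|^{0}|\log|t||^{\beta_p}$ with the exponent $\alpha_p$ governed by the eigenvalue of the limit mixed Hodge structure, and unipotency pins this exponent down so that the boundary contribution reflects only the logarithmic growth. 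The curvature term is where the Hodge-theoretic input enters: by Griffiths' curvature formula for a sub-bundle of a variation of Hodge structure, the curvature $\Theta$ of the induced metric on $\mathcal{L}|_{C_0}$ differs from the flat (i.e.\ identically zero) curvature by the negative of the norm-squared of the second fundamental form $\sigma$, so that $\frac{i}{2\pi}\int_{C_0}\Theta\le 0$ with equality if and only if $\sigma\equiv 0$.

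The main obstacle, and the heart of the argument, is combining these two facts to force $\sigma\equiv 0$. Since $\mathcal{L}\equiv 0$ we have $\deg_C\mathcal{L}=0$; feeding this into Lemma~\ref{kawa21} together with the sign of the curvature integral and the sign of the boundary terms $\alpha_p$ (which unipotency and the admissibility of the variation make non-negative) yields that both the interior curvature integral and each boundary contribution must vanish. Vanishing of $\int_{C_0}\Theta$ together with $\Theta=-|\sigma|^2$ forces the second fundamental form to vanish identically on $C_0$, which is exactly the statement that $\mathcal{L}|_{C_0}$ is horizontal for the Gauss--Manin connection, hence a flat subbundle, hence a local subsystem of $\mathcal{W}_l/\mathcal{W}_{l-1}$. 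The delicate points to verify carefully will be that the Hodge metric on the \emph{graded quotient} $\mathcal{W}_l/\mathcal{W}_{l-1}$ of the mixed Hodge structure is genuinely a polarized variation of pure Hodge structure (so that Griffiths' curvature estimate applies with the correct sign), and that the growth exponents $\alpha_p$ are non-negative in the mixed setting; once these are in place, the degree-zero hypothesis closes the argument.
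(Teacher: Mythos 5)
Your proposal follows essentially the same route as the paper's proof: equip $\mathcal{L}|_{Z_0}$ with the metric induced by the flat polarization of the pure variation $\mathcal{W}_l/\mathcal{W}_{l-1}$, use unipotency (via the canonical extension) to see that the metric of a generating section grows only logarithmically near $\Sigma_Z$ so that the boundary terms $\alpha_p$ in Lemma \ref{kawa21} vanish, and then deduce from $\deg_C\mathcal{L}=0$ on curves that the curvature, hence the second fundamental form, vanishes, so $\mathcal{L}|_{Z_0}$ is a flat subbundle, i.e.\ a local subsystem. The only real difference is that you make explicit the semi-definiteness of the curvature (Griffiths' formula, $\Theta=-|\sigma|^2$ up to the sign conventions of the polarized setting), which is indeed needed to pass from the vanishing of the integrals $\int_{C_0}\Theta$ to the pointwise vanishing $\Theta\equiv 0$ and which the paper's proof uses implicitly.
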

\begin{proof}
Since $\mathcal{W}_l/\mathcal{W}_{l-1}$ is a variation of Hodge structure,
there is on it a flat bilinear form $Q$ and thus a metric and a metric connection.
Then $\mathcal{L}|_{Z_0}$ has an induced hermitian metric $h$, a metric connection and a curvature $\Theta$.
To prove that $\mathcal{L}|_{Z_0}$ defines a flat subsystem of $\mathcal{W}_l/\mathcal{W}_{l-1}$
it is sufficient to prove that the induced metric connection is flat,
i.e. that $\Theta=0$.\\
The relation between the matrix $\Gamma$ of the connection and the matrix $H$ that represents the metric
is $\Gamma=\bar{H}^{-1}\partial \bar{H}$.
By Remark \ref{lamatrdellaconn}, the order near $p$
of the elements of $\Gamma$ is $O(|t|^{-1}|\log t|^{\beta_p})$.
Let $v$ be a uniformizing section of $\mathcal{L}$.
Then the order of $h(v,v)$ near $p$ is $O(|\log t|^{\beta_p})$.
Let $C\subseteq Z$ be a curve such that $C\cap Z_0\neq\emptyset$.
Let $\nu\colon\hat{C}\rightarrow C$ be its normalization and $C_0=C\cap Z_0$.
We apply Lemma \ref{kawa21} and we obtain
$$\deg_C \mathcal{L}=\frac{i}{2\pi}\int_{\nu^{-1}C_0}{\nu^{\ast}\Theta}.$$
Since $\mathcal{L}$ is numerically zero, we obtain
$$\int_{\nu^{-1}C_0}{\nu^{\ast}\Theta}=0$$
for every $C$ and therefore $\Theta=0$.
\end{proof}
Let us recall that we are working with a cyclic covering $V\rightarrow X$ of degree $r$,
 whose Galois group is the group of $r$-th roots of unity
that we denote $\mu_r$.
Moreover we have an induced action of $\mu_r$ on the sheaves of relative differentials.
Let $P_V$ be the divisor given by the sum of the horizontal places of the pair $(V,B_V)$.

\begin{lem}\label{preservs}
The action of $\mu_r$ on $(h_0)_{\ast}\omega_{V_0/Z_0}(P_V)$ preserves the weight filtration $$\{\mathcal{W}_k((h_0)_{\ast}\omega_{V_0/Z_0}(P_V))\}.$$
\end{lem}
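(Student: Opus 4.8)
The plan is to reduce the statement to a single geometric fact: the Galois action of $\mu_r$ on $V$ is compatible with the fibration and fixes the divisor $P_V$. This suffices because the weight filtration is defined purely in terms of the logarithmic pole order along $D=P_V$. Indeed, by construction $\mathcal{W}_k=(h_0)_{\ast}\Omega^k_{V_0/Z_0}(\log P_V)\otimes\Omega^{\bullet-k}_{V_0/Z_0}$, so its trace on $(h_0)_{\ast}\omega_{V_0/Z_0}(P_V)=(h_0)_{\ast}\Omega^d_{V_0/Z_0}(\log P_V)$ consists of those relative top-forms having at most $k$ logarithmic poles along $P_V$. Hence, once we know that the action is equivariant over $Z$ and respects $P_V$ as a divisor, it automatically respects the number of logarithmic poles, and therefore each $\mathcal{W}_k$.

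First I would record that a generator $\sigma\in\mu_r$ acts as an automorphism of $V$ over $Z$: by definition of the covering $g\colon V\to X$ we have $g\circ\sigma=g$, and since $h=f\circ g$ this gives $h\circ\sigma=h$, so $h_0$ is $\sigma$-equivariant. Next I would prove that $\sigma$ preserves $P_V$. From $g\circ\sigma=g$ we get $\sigma^{\ast}g^{\ast}=g^{\ast}$, and because $\sigma$ is an automorphism it fixes the canonical class, $\sigma^{\ast}K_V=K_V$. Using the defining relation $K_V+B_V=g^{\ast}(K_X+B)$ we then compute
$$\sigma^{\ast}B_V=\sigma^{\ast}g^{\ast}(K_X+B)-\sigma^{\ast}K_V=g^{\ast}(K_X+B)-K_V=B_V,$$
so the pair $(V,B_V)$ is $\mu_r$-invariant. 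Consequently $\sigma$ permutes the log canonical places of $(V,B_V)$, and since $\sigma$ lies over $Z$ it sends horizontal places to horizontal places. Thus $\sigma$ permutes the components of the reduced divisor $P_V$ and in particular satisfies $\sigma^{\ast}P_V=P_V$.

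Finally, because $\sigma$ is an automorphism of $V$ over $Z$ with $\sigma^{\ast}P_V=P_V$, the induced pullback on relative differentials carries $\Omega^k_{V_0/Z_0}(\log P_V)$ isomorphically to itself without increasing the logarithmic pole order along $P_V$. Therefore $\sigma^{\ast}$ preserves the subcomplex $\Omega^k_{V_0/Z_0}(\log P_V)\otimes\Omega^{\bullet-k}_{V_0/Z_0}$ for every $k$, and pushing forward along the $\sigma$-equivariant morphism $h_0$ shows that the resulting action of $\mu_r$ on $(h_0)_{\ast}\omega_{V_0/Z_0}(P_V)$ preserves $\mathcal{W}_k((h_0)_{\ast}\omega_{V_0/Z_0}(P_V))$ for all $k$. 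The only genuinely substantive step is the invariance of $P_V$, that is the $\mu_r$-invariance of $B_V$ together with the preservation of horizontality; the remainder is the formal fact that an automorphism fixing a normal crossing divisor preserves its logarithmic de Rham complex and the attached weight filtration.
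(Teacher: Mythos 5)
The gap is in your very first step: you assert that a generator $\sigma\in\mu_r$ acts as an \emph{automorphism} of $V$ over $Z$, deducing this from $g\circ\sigma=g$. In this paper $V$ is not the cyclic cover itself; it is a desingularization of the normalization $\bar{V}$ of $X$ in $\mathbb{C}(X)(\sqrt[r]{\varphi})$. The Galois group acts by automorphisms on $\bar{V}$ (equivalently, on $g_{\ast}\mathcal{O}_V$), but this action lifts to the chosen resolution $V$ only as a \emph{birational} self-map $\sigma\colon V\dasharrow V$, since an arbitrary desingularization is not equivariant. Consequently the statements $\sigma^{\ast}K_V=K_V$, $\sigma^{\ast}B_V=B_V$, $\sigma^{\ast}P_V=P_V$, and the concluding formal step that ``an automorphism fixing a normal crossing divisor preserves its logarithmic de Rham complex'' are all unjustified as written: they presuppose regularity of $\sigma$, which is exactly what fails. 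This is not a technicality, because handling the birationality is the entire content of the lemma. The paper's proof does precisely this: it takes a resolution $V_1$ of the birational map $\sigma$, with morphisms $\sigma_1,\sigma_2\colon V_1\to V$, shows by an explicit local computation that pulling back along the composition of blow-ups $\sigma_2$ sends $\frac{dz_i}{z_i}$ to $\frac{dz'_i}{z'_i}+\frac{dt}{t}$ (so the number of logarithmic poles, i.e.\ the weight, does not increase), and then observes that the pushforward along $\sigma_1$ does not increase the number of poles either.

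Your approach could in principle be repaired: in characteristic zero one may choose a $\mu_r$-equivariant resolution of $\bar{V}$, arranging equivariantly that $B_V+\Sigma_V$ has simple normal crossing support, and then your argument --- invariance of $B_V$, hence of the horizontal places $P_V$, hence of the log complex and its weight filtration --- becomes correct and indeed cleaner than the paper's. But that route requires explicitly invoking equivariant resolution of singularities and checking that the paper's constructions (in particular the sheaf $(h_0)_{\ast}\omega_{V_0/Z_0}(P_V)$ and its eigensheaf decomposition) are insensitive to this choice of model. As written, the proposal simply assumes the action is regular, so it has a genuine gap at the one substantive point of the lemma.
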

\begin{proof}
A generator of $\mu_r$ determines a birational map $\sigma\colon V\dasharrow V$.
Consider a resolution of $\sigma$
$$
\xymatrix{
V_1 \ar[d]_{\sigma_1} \ar[rd]^{\sigma_2}
&\\
V \ar@{-->}[r]_{\sigma}
& V. }
$$
Let us consider the weight filtrations on $\Omega_{V_0/Z_0}^{\bullet}(\log P_V)$
and on $\Omega_{V_{10}/Z_0}^{\bullet}(\log P_{V_1})$
where $$P_{V_1}={\rm Supp}(\sigma_2^{-1}P_V).$$
The morphism $\sigma_2$ induces, for all $m$, the following:
$$\sigma_2^{\ast}\colon \Omega_{V_0/Z_0}^m(\log P_V)\rightarrow\Omega_{V_{10}/Z_0}^m(\log P_{V_1}).$$
We want to prove that $\sigma_2$ preserves the weight filtration.
Since $\sigma_2$ is a composition of blow-ups of smooth centres it is sufficient to prove the property
for one blow-up.\\
Let $z_1,\ldots,z_n$ be a system of coordinates on $U\subseteq V_0$ such that
$$P_V\cap U=\{z_1\cdot\ldots\cdot z_k=0\}.$$
Then $$\sigma_2^{\ast}\left(\frac{d z_1}{z_1}\wedge\ldots\wedge\frac{d z_h}{z_h}\wedge dz_{h+1}\ldots\wedge d z_n\right)=
\sigma_2^{\ast}\left(\frac{d z_1}{z_1}\right)\wedge\ldots\wedge\sigma_2^{\ast}\left(\frac{d z_h}{z_h}\right)\wedge \sigma_2^{\ast}(dz_{h+1})\ldots\wedge \sigma_2^{\ast}(d z_n).$$
Let $C$ be the centre of the blow-up, let $z_i$ be one of the coordinates.
There are two cases
\begin{description}
\item[(i)] locally $C$ is contained in the zero locus of $z_i$;
\item[(ii)] $C$ is not contained in the zero locus of $z_i$.
\end{description}
In case \textbf{(i)}, let $t$ be an equation of the exceptional divisor and
let $z'_i$ be an equation of the strict transform of $z_i$.
Then $\sigma_2^{\ast}(d z_i)=d(z'_i\cdot t)=t\cdot d z'_i+z'_i\cdot d t$
and $$\sigma_2^{\ast}\left(\frac{d z_i}{z_i}\right)=\frac{t\cdot d z'_i+z'_i\cdot d t}{z'_i t}=\frac{d z'_i}{z'_i}+\frac{d t}{t}.$$
In case \textbf{(ii)}, we simply have
$$\sigma_2^{\ast}(d z_i)=d z'_i\;\;{\rm and}\;\;\sigma_2^{\ast}\frac{d z_i}{z_i}=\frac{d z'_i}{z'_i}$$

Finally, the morphism $\sigma_1$ acts by pushforward and that does not increase the number of poles.
\end{proof}

\begin{proof}[Proof of Theorem \ref{mainteolc}]
By Proposition \ref{trivcurvetrivtuttolc} we can assume that the base $Z$ is a curve.\\
Let us suppose that $\mathcal{W}_k/\mathcal{W}_{k-1}$
has unipotent monodromies for every $k$.
Since by Lemma \ref{preservs} the action of $\mu_r$
preservs the weight filtration
$$\{\mathcal{W}_k((h_0)_{\ast}\omega_{V_0/Z_0}(P_V))\}$$
on $V$, then
for all $k$ the sheaf 
$$\mathcal{W}_k((h_0)_{\ast}\omega_{V_0/Z_0}(\log P_V))=h_{\ast}(\Omega_{V_0/Z_0}^k(\log P_V)\otimes\Omega_{V_0/Z_0}^{n-k})$$ 
decomposes as sum of eigensheaves.
In particular, since $\mathcal{O}_Z(M_Z)$ is an eigensheaf of rank one of $h_{\ast}\omega_{V/Z}(P_V)$,
there exist $l$ such that $\mathcal{O}_Z(M_Z)|_{Z_0}\subseteq {W}_l$
and $\mathcal{O}_Z(M_Z)|_{Z_0}\not\subseteq \mathcal{W}_{l-1}$.
Thus there exists $\mathcal{V}$ containing $\mathcal{W}_{l-1}$
such that $\mathcal{W}_l=\mathcal{O}_Z(M_Z)|_{Z_0}\oplus\mathcal{V}$.
By Proposition \ref{subsystem} thus $\mathcal{O}_Z(M_Z)|_{Z_0}$
defines a local subsystem of $\mathcal{W}_l/\mathcal{W}_{l-1}$.
By Corollary \ref{gabolomixed} we have $\mathcal{O}_Z(M_Z)|_{Z_0}^{m(h)}\sim\mathcal{O}_{Z_0}$
with $h={\rm rk} \mathcal{W}_l/\mathcal{W}_{l-1}$.
Since if the monodromies are unipotent the canonical extension commutes with
tensor product, we have $\mathcal{O}_Z(M_Z)^{m(h)}\sim\mathcal{O}_{Z}$.

The general situation, when $\mathcal{W}_k/\mathcal{W}_{k-1}$
has not unipotent monodromies for every $k$,
can be reduced to the unipotent situation.
We take a covering $\tau\colon Z'\rightarrow Z$ such that on $Z'$ we have unipotent monodromies.
Then $m(h)M_{Z'}\sim \mathcal{O}_{Z'}$ and since $M_{Z'}=\tau^{\ast}M_Z$ we have 
$\deg \tau \cdot m(h) M_Z\sim\mathcal{O}_Z$.
\end{proof}

\addcontentsline{toc}{chapter}{Bibliography}

\end{document}